\newtheorem{theorem}{Theorem}
\newtheorem{lemma}[theorem]{Lemma}
\theoremstyle{definition}
\newtheorem{definition}[theorem]{Definition}
\newtheorem*{acknowledgments}{Acknowledgments}
\theoremstyle{remark}
\newtheorem{remark}[theorem]{Remark}
\newcommand{\lk}{\operatorname{\ell{\it k}}}
\newcommand{\vlk}{\operatorname{{\it v}\ell{\it k}}}
\title[Virtual Seifert Surfaces]{Virtual Seifert Surfaces}
\author[M. Chrisman]{Micah Chrisman}
\address{Department of Mathematics, Monmouth University, West Long Branch, New Jersey}
\email{mchrisma@monmouth.edu}
\subjclass[2010]{Primary: 57M25, Secondary: 57M27}
\keywords{Virtual Seifert surfaces, signature functions, canonical Seifert genus.}
\begin{document}
\begin{abstract} A virtual knot that has a homologically trivial representative $\mathscr{K}$ in a thickened surface $\Sigma \times [0,1]$ is said to be an almost classical (AC) knot. $\mathscr{K}$ then bounds a Seifert surface $F\subset \Sigma \times [0,1]$. Seifert surfaces of AC knots are useful for computing concordance invariants and slice obstructions. However, Seifert surfaces in $\Sigma \times [0,1]$ are difficult to construct. Here we introduce virtual Seifert surfaces of AC knots. These are planar figures representing $F \subset \Sigma \times [0,1]$. An algorithm for constructing a virtual Seifert surface from a Gauss diagram is given. This is applied to computing signatures and Alexander polynomials of AC knots. A canonical genus of AC knots is also studied. It is shown to be distinct from the virtual canonical genus of Stoimenow-Tchernov-Vdovina.\end{abstract}
\maketitle
For an oriented knot or link $L$ in $S^3$, H. Seifert gave in \cite{seifert} a simple algorithm for constructing a compact oriented surface $F$ bounded by $L$, now called a Seifert surface of $L$. As is well known, Seifert surfaces appear prominently in the computation of many link invariants, such as Alexander polynomials, signature functions, and Milnor invariants. Seifert surfaces can also be used to study homologically trivial knots in other $3$-manifolds (e.g. see \cite{ct,UK}). The construction of a Seifert surface, however, can be both mathematically and artistically challenging in the more general setting. 
\newline
\newline
Here we consider two related questions: (1) how does one draw a Seifert surface for a homologically trivial knot in a thickened surface $\Sigma \times [0,1]$, where $\Sigma$ is compact, connected, and oriented, and (2) how can such surfaces be employed in the study of virtual knots? These issues arise, for example, in the computation of the \emph{directed signature functions} \cite{bcg2} of Boden, Gaudreau, and the author. Directed signatures give bounds on the slice genus for those virtual knots that can be represented by a homologically trivial knot in some thickened surface. To compute them, a choice of Seifert surface must be made. Hence, a systematic means for producing Seifert surfaces in $\Sigma \times [0,1]$ is needed in order to extract the geometric content of directed signature functions.

\begin{figure}[htb]
\begin{tabular}{|c|} \hline \\
\begin{tabular}{c} \def\svgwidth{2.7in}
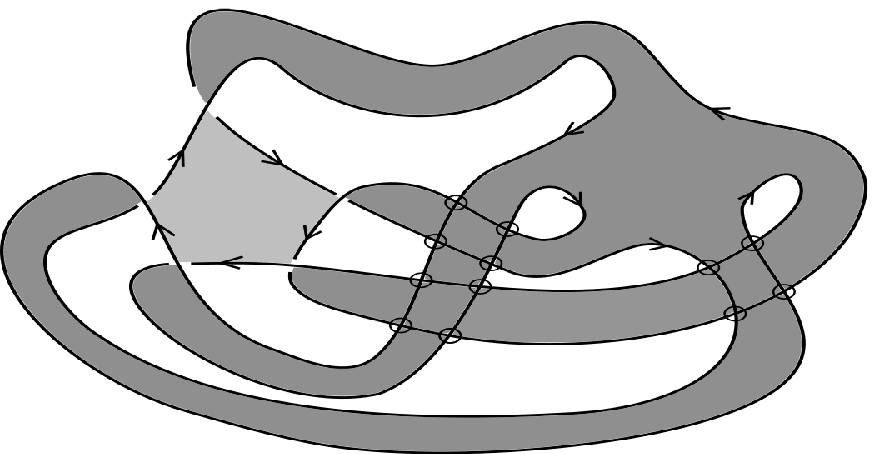 \end{tabular}\\ \hline
\end{tabular}
\end{figure}

A virtual knot that admits a homologically trivial representative in some $\Sigma \times [0,1]$ is called an almost classical (AC) knot. Here we define \emph{virtual Seifert surfaces} for AC knots. A virtual Seifert surface is a planar figure that depicts a Seifert surface $F \subset \Sigma \times [0,1]$, similar to the manner in which a virtual knot diagram is a planar figure depicting a knot $\mathscr{K} \subset \Sigma \times [0,1]$. An example of a virtual Seifert surface is shown above.  Our main result is an algorithm for constructing virtual Seifert surfaces from the Gauss diagram of an AC knot. We apply this to computing slice obstructions. Other applications of virtual Seifert surfaces will appear elsewhere (see \cite{bcg2} v2).
\newline

Recall the Seifert surface algorithm for a classical knot. Starting with a knot diagram $K$ in $S^2=\mathbb{R}^2 \cup \{\infty\}$, perform the oriented smoothing at each crossing. This results in disjoint closed curves called \emph{Seifert cycles}. Each curve bounds a disc in $S^2$, which forms a subsurface of the Seifert surface. The discs are placed at different heights in $S^2 \times [0,1]$ and half-twisted bands are attached to the discs at the crossings of $K$. The result is a Seifert surface $F$ of the knot $K\subset S^2 \times [0,1]$. 
\newline
\newline
The theory of virtual Seifert surfaces given here proceeds along similar lines. For every Gauss diagram $D$, there is a naturally associated surface $\Sigma$ to $D$, called the \emph{Carter surface}. The Gauss diagram $D$ can be drawn as a knot diagram $K$ on $\Sigma$. For an AC knot, $K$ is homologically trivial on $\Sigma$. The homology chain complex of $\Sigma$ and the homology class of each Seifert cycle of $K$ can be computed directly from $D$. Since $[K]=0 \in H_1(\Sigma)$, linear combinations of the Seifert cycles bound subsurfaces of $\Sigma$. The subsurfaces are themselves linear combinations of $2$-handles in the standard handle decomposition of the Carter surface. The linear combinations can be explicitly drawn in the plane, although pieces of the surface may need to pass over one another virtually. The virtual Seifert surface $F$ is again completed by attaching half-twisted bands  at the crossings. 
\newline

A useful feature of classical Seifert surfaces is that they can be deformed into disc-band presentations via pictures in the plane (see e.g. \cite{on_knots}). The advantage of this lies in the fact that the linking numbers in a Seifert matrix are easier to compute when a surface takes this form.  Here we will show how to manipulate virtual Seifert surfaces into \emph{virtual band presentations}. The deformations can also be accomplished through planar pictures. The entries of the Seifert matrix are in this case virtual linking numbers and the calculation of invariants then proceeds as usual.
\newline

The \emph{$3$-genus} of a classical knot is the minimal genus among all Seifert surfaces that it bounds. A Seifert surface constructed from a knot diagram with Seifert's algorithm is said to be \emph{canonical}. The \emph{canonical 3-genus} is the minimal genus among all canonical Seifert surfaces of a knot. Moriah \cite{moriah} proved that the difference between the canonical $3$-genus and $3$-genus of a classical knot can be arbitrarily large. A natural question to ask is whether the virtual Seifert surface algorithm applied to some virtual knot diagram of a classical knot can produce a surface of genus smaller than its canonical $3$-genus. Here we show that this is impossible: the smallest genus among all virtual Seifert surfaces of a classical knot is the classical canonical $3$-genus. This is accomplished by building on important work of Boden-et-al.\cite{acpaper}, Kauffman \cite{lou_cob}, Stoimenow-Tchernov-Vdovina \cite{sto_canon}, and Tchernov \cite{chernov_proj}. As a clarification of terminology, the smallest genus among all virtual Seifert surfaces of an AC knot, which we will call the \emph{virtual canonical 3-genus}, is a fundamentally different concept from the canonical genus defined in \cite{sto_canon}. In fact, we will see an example where they are unequal. For classical knots, we prove the two notions coincide with the classical canonical 3-genus. 
\newline


The organization of this paper is as follows. Section \ref{sec_back} reviews virtual and almost classical knots. Section \ref{sec_homology} discusses a method of computing the homology chain complex of an AC knot. Section \ref{sec_vss_defn} gives precise definitions of virtual band presentations and virtual Seifert surfaces. The virtual Seifert surface algorithm is given in Section \ref{sec_vss}. Section \ref{sec_step_6} shows how to manipulate virtual Seifert surfaces into virtual band presentations. Section \ref{sec_comp} applies virtual Seifert surfaces to computing the slice obstructions from \cite{bcg2}. In Section \ref{sec_canon}, the virtual canonical $3$-genus is studied. In this paper, decimal numbers, such as 5.2025, refer to the virtual knot names from Green's tabulation \cite{green}. The ones digit denotes the classical crossing number. Three examples are used throughout to illustrate the virtual Seifert surface algorithm: 4.99, 5.2025, and 6.87548. Henceforth, we set $I=[0,1]$.

\section{Background}\label{sec_back}
\subsection{Virtual knots} \label{sec_virt_defn} Virtual knots were introduced by L. H. Kauffman in the mid 1990s \cite{KaV}. They have several equivalent definitions. A \emph{virtual knot diagram} is a generic immersion  $\upsilon:S^1 \to \mathbb{R}^2$, where the double points are decorated as classical crossings or virtual crossings. A virtual crossing is denoted by a small circle around the double point. Two virtual knots $\upsilon_1$ and $\upsilon_2$ are said to be equivalent, denoted $\upsilon_1 \leftrightharpoons \upsilon_2$, if one may be obtained from the other by a finite sequence of extended Reidemeister moves (see Figure \ref{fig_reid_moves}). An equivalence class of diagrams is called a \emph{virtual knot}. 

\begin{figure}[htb]
\begin{tabular}{|c|} \hline \\
\begin{tabular}{c} \def\svgwidth{6in}
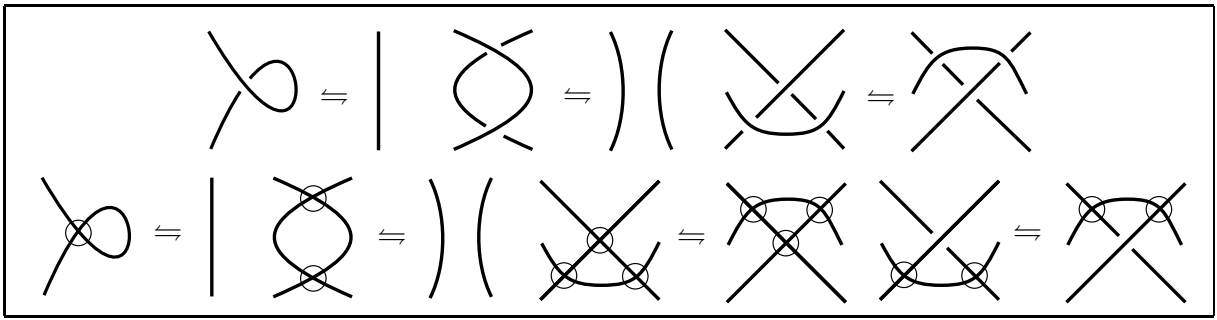 \end{tabular}\\ \hline
\end{tabular}
\caption{The extended Reidemeister moves.} \label{fig_reid_moves}
\end{figure}

The Gauss diagram $D$ of a virtual knot diagram $\upsilon$ is found by connecting the pre-images of the double points of the classical crossings of $\upsilon:S^1 \to \mathbb{R}^2$ by an arrow. The arrow is directed from the over-crossing arc to the under-crossing arc (see Figure \ref{fig_gauss}). Each arrow of $D$ is also marked with the sign $(\pm)$ of the local writhe of the crossing. Any two virtual knot diagrams that have the same Gauss diagram are equivalent \cite{GPV}.
\newline

A virtual knot diagram can then be viewed as the result of an attempt to draw a configuration of classical crossings in a Gauss diagram as a knot diagram in $\mathbb{R}^2$. The inability to do so for a particular configuration necessitates the addition of crossings not specified by the Gauss diagram and hence are marked as virtual.  Every Gauss diagram, however, can be drawn as a knot diagram $K$ on a higher genus surface. One way to see this is to use the \emph{Carter surface} of a virtual knot diagram \cite{CKS}.
\newline

The Carter surface is constructed from a virtual knot diagram $\upsilon$ using a handle decomposition. A $0$-handle (i.e. a disc) is centered at each classical crossing. The $1$-handles are untwisted bands attached to the $0$-handles along the arcs between the classical crossings of $\upsilon$. Thus the arcs of $\upsilon$ are the cores of the 1-handles.  At a virtual crossing, the $1$-handles pass over and under one another, as in Figure \ref{fig_zeroone_hand_attach}. The resulting surface is closed by attaching $2$-handles along each boundary component (see Figure \ref{fig_two_hand_attach}). Observe that the handle decomposition is determined only by the Gauss diagram of $\upsilon$. Furthermore, changing any of classical crossings of $\upsilon$ from over to under (or vice versa) gives a Carter surface with an identical handle decomposition. This fact will be used later.
\newline

\begin{figure}[htb]
\begin{tabular}{|ccc|} \hline & & \\
\begin{tabular}{c} \def\svgwidth{1.35in}
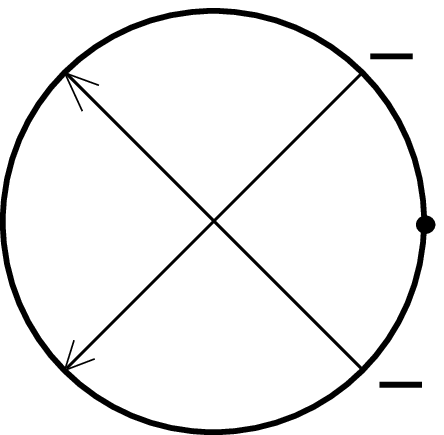 \end{tabular} & $\to$ &  \begin{tabular}{c} \def\svgwidth{1.35in}
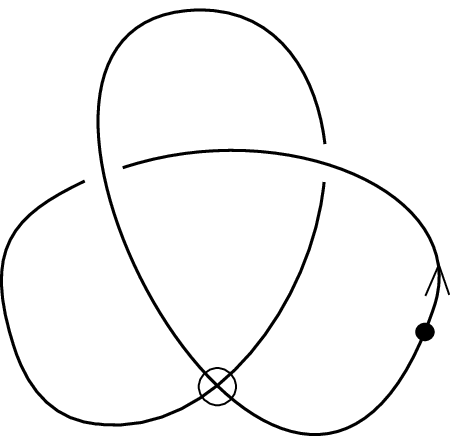 \end{tabular} \\ \hline
\end{tabular}
\caption{A Gauss diagram $D$ (left) and a virtual knot representative of $D$ (right).} \label{fig_gauss}
\end{figure}

\begin{figure}[htb]
\begin{tabular}{|ccc|} \hline & & \\
\begin{tabular}{c} \def\svgwidth{1.35in}
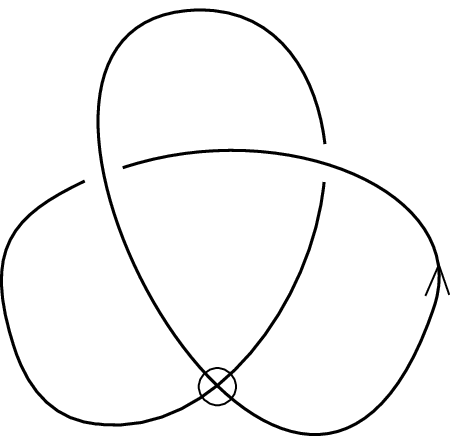 \end{tabular} & 
\begin{tabular}{ccc}
\begin{tabular}{c} \def\svgwidth{.5in}
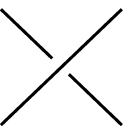 \end{tabular} & $\to$ &  \begin{tabular}{c} \def\svgwidth{.75in}
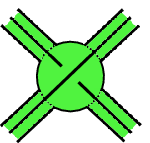 \end{tabular}\\
\begin{tabular}{c} \def\svgwidth{.5in}
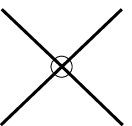 \end{tabular} & $\to$ &  \begin{tabular}{c} \def\svgwidth{.75in}
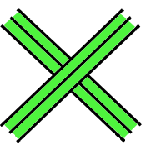 \end{tabular}\\
\end{tabular}
&  \begin{tabular}{c} \def\svgwidth{1.5in}
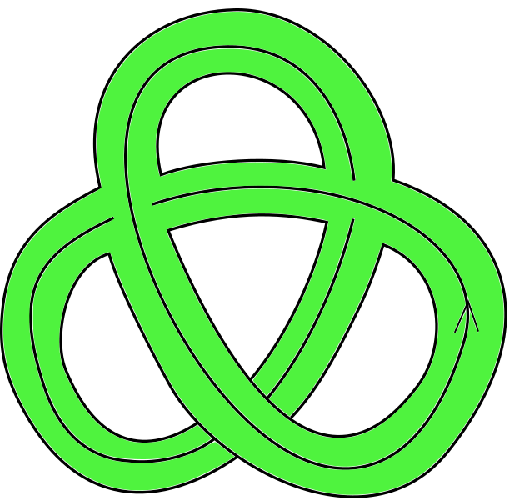 \end{tabular} \\ \hline
\end{tabular}
\caption{Attaching $0$- and $1$-handles of the Carter surface.} \label{fig_zeroone_hand_attach}
\end{figure}

\begin{figure}[htb]
\begin{tabular}{|ccc|} \hline & & \\
\begin{tabular}{c} \def\svgwidth{1.5in}
\input{carter_surf_1.eps_tex} \end{tabular} & $\to$ &  \begin{tabular}{c} \def\svgwidth{2.5in}
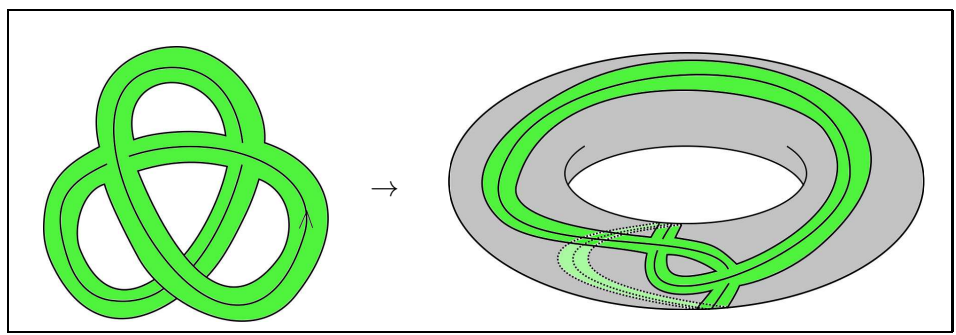 \end{tabular} \\ \hline
\end{tabular}
\caption{Attaching 2-handles to close the Carter surface.} \label{fig_two_hand_attach}
\end{figure}

A Gauss diagram can also be constructed from a knot diagram on any closed oriented surface. If a $1$-handle is added to this surface so that it is disjoint from the knot diagram, then the Gauss diagram of the knot remains unchanged. The operation of adding a $1$-handle is called \emph{stabilization}. The inverse operation is called \emph{destabilization}. The Gauss diagram of a knot diagram on a surface is also unchanged by orientation preserving diffeomorphisms of surfaces. Thus, virtual knots can be interpreted as knots in thickened surfaces, considered equivalent up to ambient isotopy, orientation preserving diffeomorphisms of surfaces, and stabilization/destabilization.  
\newline

The first step in the classical Seifert surface algorithm is to perform the oriented smoothing at every crossing. For future use in constructing virtual Seifert surfaces, we now describe the effect that different kinds of smoothing have on a Gauss diagram. The \emph{oriented smoothing} (see Figure \ref{fig_seif_cross}, far left and far right) at one crossing splits the knot diagram into a two-component oriented link. The \emph{disoriented smoothing} (see Figure \ref{fig_seif_cross}, middle two pictures) at one crossing yields an unoriented knot diagram. To smooth a Gauss diagram, first delete a small neighborhood of each arrow endpoint and then reconnect the ends. The manner of reconnecting for each type of smoothing is also shown in Figure \ref{fig_seif_cross}. Lastly, recall that a smoothing at a crossing may be either an $A$ smoothing or a $B$ smoothing. In an oriented knot diagram, an $A$ or $B$ smoothing may be either oriented or disoriented, depending on the crossing sign. The resulting four possibilities are labeled in Figure \ref{fig_seif_cross}.

\begin{figure}[htb]
\begin{tabular}{cccc}
\begin{tabular}{c}\def\svgwidth{1.3in} 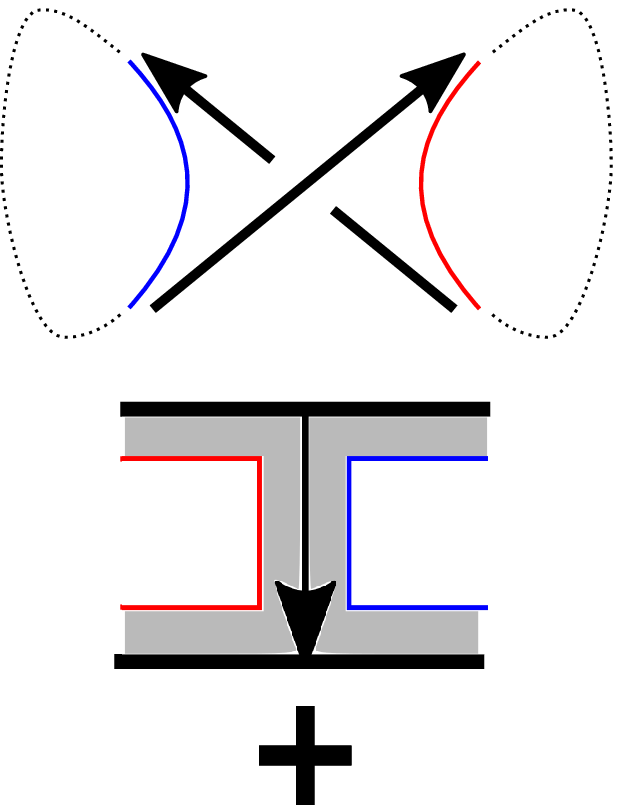\\ \underline{oriented} \end{tabular} & \begin{tabular}{c} \def\svgwidth{1.3in} 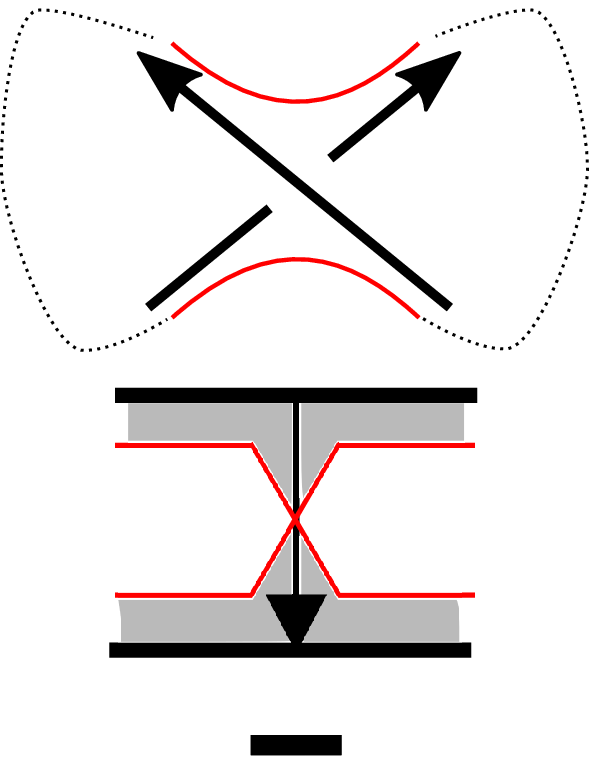 \\ \underline{disoriented} \end{tabular} & \begin{tabular}{c} \def\svgwidth{1.3in} 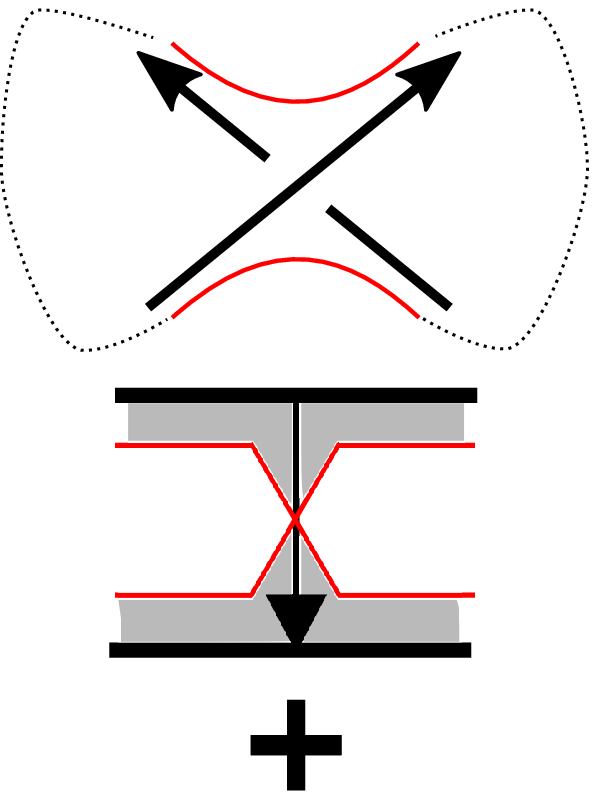 \\ \underline{disoriented} \end{tabular} & \begin{tabular}{c} \def\svgwidth{1.3in} 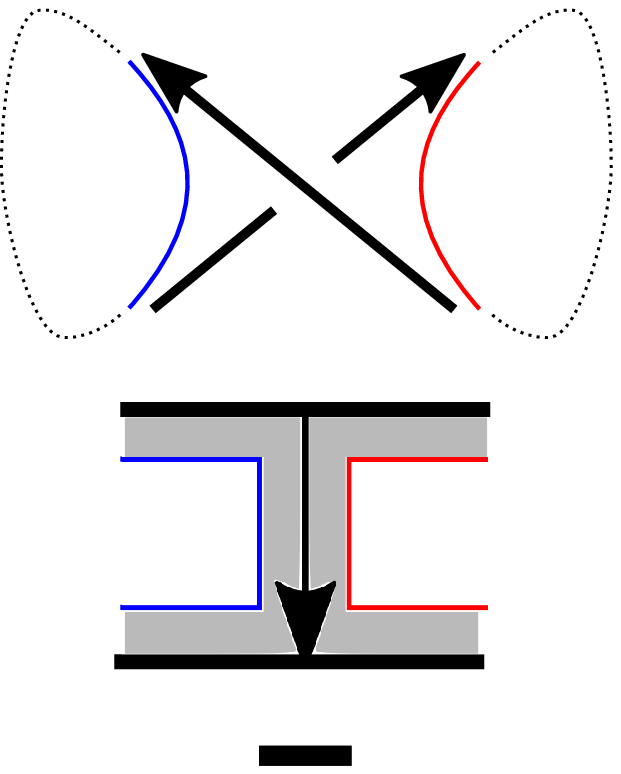 \\ \underline{oriented} \end{tabular}
\end{tabular}
\caption{The effect of smoothing on an arrow in a Gauss diagram.} \label{fig_seif_cross}
\end{figure}

\subsection{Almost Classical Knots} \label{sec_AC} For an oriented knot $\mathscr{K}$ in a $3$-manifold $M$, a \emph{Seifert surface} of $\mathscr{K}$ is a compact connected oriented surface $F \subset M$ such that $\partial F=\mathscr{K}$, where the orientation of $F$ induces the given orientation of $\mathscr{K}$. Suppose $\upsilon$ is a virtual knot, $\mathscr{K}$ is a representative of $\upsilon$ in some thickened surface $\Sigma \times I$, and that $\mathscr{K}$ bounds a Seifert surface $F$. Then $K$ is said to be \emph{almost classical} (AC). Almost classical knots were originally defined by Silver-Williams \cite{silwill} as those virtual knots admitting a diagram with an Alexander numbering. Boden et al.\cite{acpaper} tabulated the distinct AC knots having classical crossing number at most $6$. There are $76$ such nontrivial AC knots in total; their Gauss diagrams can be found in Figure 20 at the end of \cite{acpaper}.
\newline
\newline
There are four equivalent definitions of AC knots that are each useful under different circumstances. For a proof of said equivalence, see Sections 5 and 6 of \cite{acpaper}. The conditions are:
\begin{enumerate}
\item $\upsilon$ admits a diagram with an Alexander numbering.
\item $\upsilon$ has a homologically trivial representative $\mathscr{K}$ in some thickened surface $\Sigma \times I$ (and in particular, admits a diagram  which is homologically trivial on its Carter surface)
\item $\upsilon$ has a representative $\mathscr{K}$ that bounds a Seifert surface $F$ in some thickened surface $\Sigma \times I$.
\item $\upsilon$ admits a Gauss diagram such that every arrow has index $0$. 
\end{enumerate}
Condition (4) provides the simplest method to prove that a given virtual knot is AC. Geometrically, the index of a crossing $x$ of a knot diagram $K$ on a surface $\Sigma$ is (up to sign) the algebraic intersection number of the two closed curves obtained by performing the oriented smoothing at $x$. This can be computed combinatorially from a Gauss diagram (see e.g. \cite{acpaper,henrich}). 
\newline
\newline
By an \emph{AC diagram} (respectively, \emph{AC Gauss diagram}) of a virtual knot, we mean a diagram (respectively, Gauss diagram) that satisfies one of the equivalent conditions for a virtual knot to be AC.
\newline
\newline
A constructive proof that $(2) \implies (3)$ was given in \cite{acpaper}. Here we sketch the argument for later use. Suppose a diagram $K$ of a homologically trivial knot on the Carter surface $\Sigma$ is given. First perform the oriented smoothing at each crossing of $K$. This creates a family of disjoint simple closed curves $s_1,\ldots,s_p$ on $\Sigma$. These are called the \emph{Seifert cycles} of $K$. Since $K$ is homologically trivial on $\Sigma$, so is $s_1 \cup \cdots \cup s_p$. Then there is a collection $S_1,\ldots,S_l$ of connected compact oriented subsurfaces of $\Sigma$ such that $\partial S_i \ne \emptyset$ for all $i$ and $\bigcup_{i=1}^l \partial S_i=\bigcup_{i=1}^p s_i$ (see \cite{acpaper}, Proposition 6.2).  A Seifert surface $F$ is constructed by placing the surfaces $S_1,\ldots,S_l$ at different heights in $\Sigma \times I$ as necessary and then attaching half-twisted bands at the smoothed crossings of $K$. 
\newline
\newline
If $\Sigma=S^2$, this gives the Seifert surface algorithm for classical knots. In this case, each $S_i$ may be chosen to be a disc. This is not true for AC knots in general. Indeed, the genus of each $S_i$ might be any non-negative integer and the number of boundary components of $S_i$ might be any natural number. This leads to an ambiguity in the constructive existence proof for Seifert surfaces from \cite{acpaper}: as there are many options, how does one find linear combinations of the Seifert cycles that bound subsurfaces of $\Sigma$? The virtual Seifert surface algorithm builds off the constructive proof from \cite{acpaper} and reduces this ambiguity to a homological calculation. 

\section{The homology of the Carter surface of AC knots} \label{sec_homology} Here we give a method for computing the homology chain complex of the Carter surface $\Sigma$ for an AC knot diagram $K$ on $\Sigma$. This will be employed in the virtual Seifert surface algorithm. Let $C_i$ be the free abelian group generated by the $i$-handles of the Carter surface (see Section \ref{sec_back}) and $\partial_i:C_i \to C_{i-1}$ the $i$-th boundary map. The 0-handles of $\Sigma$ correspond to the classical crossings of $K$ and hence correspond to the arrows $z_1,\ldots,z_n$ of $D$. The cores of the $1$-handles are the arcs between the classical crossings, and hence correspond to the arcs $c_1,\ldots,c_{2n}$ of $D$ between the arrow endpoints. Denote the $2$-handles of $\Sigma$ by $d_1,\ldots,d_m$. Our main interest is $\partial_2:C_2 \to C_1$. To compute $\partial_2$, it is necessary to write $\partial_2 d_k$ as a linear combination of the $c_j$. First we review some terminology.
\newline
\newline
A virtual knot diagram is said to be \emph{alternating} if the classical crossings alternate successively between over and under while traversing the diagram (note that virtual crossings are ignored). Also recall that the \emph{all $A$ state} (\emph{all $B$ state}) is the set of cycles obtained by performing the $A$ smoothing (respectively, $B$ smoothing) at every classical crossing. The lemma below relates the computation of $\partial_2$ for an AC Gauss diagram $D$ to the all $A$ and all $B$ states of any alternating diagram $D_{\text{alt}}$ that is obtained from $D$ by crossing changes. Recall from Section \ref{sec_virt_defn} that $D_{\text{alt}}$ and $D$ then have identical Carter surfaces. 

\begin{lemma} \label{lemma_AB} Every AC Gauss diagram $D$ can be transformed to an alternating diagram $D_{\text{alt}}$ by a finite number of crossing changes. For any such $D_{\text{alt}}$ there is a one-to-one correspondence between the cycles $\partial_2 d_1,\ldots,\partial_2 d_m \in H_1(\Sigma)$ and the components of the all $A$ state and the all $B$ state.
\end{lemma}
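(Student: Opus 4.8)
The plan is to prove the two assertions of the lemma separately. For the first, I would recall that the Carter surface and its handle decomposition depend only on the underlying Gauss diagram, with crossing signs and over/under information discarded; changing all crossings that need to be changed to make the diagram alternating does not alter the Carter surface $\Sigma$ at all. So the only content of the first sentence is that every Gauss diagram (in particular an AC one) can be made alternating by a finite sequence of crossing changes. This is a standard fact: traverse the diagram following the orientation of $S^1$, and at each classical crossing encountered for the second time, if the local over/under does not alternate with the previous crossing's passage, perform a crossing change. Equivalently, an alternating diagram is exactly one whose Gauss code reads over, under, over, under, \ldots around the circle; since a crossing change swaps the over/under labels at both occurrences of that arrow, and each such swap is independent, one can realize any prescribed pattern of over/under labels, in particular the alternating one. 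I would phrase this in terms of the chord/arrow structure of $D$ to keep it purely combinatorial.

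For the second assertion, I would argue as follows. Once $D$ is replaced by $D_{\mathrm{alt}}$, the Carter surface $\Sigma$ is unchanged, and in particular the set of $2$-handles $d_1,\ldots,d_m$, the arcs $c_1,\ldots,c_{2n}$, and the boundary map $\partial_2$ are the same. The boundary of the $2$-handle $d_k$ is, up to homology, the closed curve on $\Sigma$ bounding the corresponding face of the $4$-valent graph of the knot diagram; tracing $\partial d_k$ means following the boundary of a face of $D_{\mathrm{alt}}$ drawn on $\Sigma$. The key observation is that, because $D_{\mathrm{alt}}$ is alternating, at each classical crossing the over/under pattern forces the boundary of any face to turn the same way — that is, the oriented-vs-disoriented pattern of smoothings along each face boundary is coherent — so that the face boundaries of $D_{\mathrm{alt}}$ are precisely the cycles obtained by resolving every crossing by one particular smoothing. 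For an alternating diagram, the faces split into two classes: those whose boundary is traced entirely by $A$-smoothed corners and those traced entirely by $B$-smoothed corners. These are exactly the components of the all-$A$ state and the all-$B$ state. Hence the faces of $\Sigma$, i.e. the $2$-handles $d_k$, biject with (all-$A$ components) $\sqcup$ (all-$B$ components), and under this bijection $\partial_2 d_k \in H_1(\Sigma)$ is the homology class of the corresponding state circle. I expect the cleanest route is to set up the correspondence between faces of a link diagram on a surface and the state circles of an alternating diagram via the standard checkerboard/Tait-graph picture, adapted to the Carter surface rather than $S^2$.

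The main obstacle will be the face-to-state-circle correspondence on a general closed surface: on $S^2$ it is the classical fact that an alternating diagram's complementary regions are two-colorable with the black (resp.\ white) regions realizing the all-$A$ (resp.\ all-$B$) state, but on the Carter surface I must check that the two-coloring still exists globally (it does, because the faces are the $2$-handles and adjacency across an edge always flips the color, and the relevant obstruction vanishes for a diagram coming from this handle decomposition) and that "alternating'' still forces each face boundary to consist of a single smoothing type. I would handle this by working locally at each crossing — a crossing of an alternating diagram has its four surrounding corners colored in the checkerboard pattern so that the two black corners are the $A$-corners and the two white corners are the $B$-corners (or vice versa) — and then noting that a face boundary stays within corners of one color, hence is a single state circle. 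Once that local-to-global step is in place, the bijection and the identification of $\partial_2 d_k$ with the class of the state circle are immediate, and the count $m = \#(\text{all-}A\text{ components}) + \#(\text{all-}B\text{ components})$ follows since every face is monochromatic.
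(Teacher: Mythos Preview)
Your argument for the first assertion has a genuine gap: it is \emph{not} true that every Gauss diagram can be made alternating by crossing changes. A crossing change swaps the over/under labels at \emph{both} endpoints of an arrow simultaneously, so the $2n$ labels around the circle are not independently adjustable; each arrow always contributes exactly one $O$ and one $U$. Thus the alternating pattern $O,U,O,U,\ldots$ is achievable only if every arrow connects an odd-position endpoint to an even-position endpoint. A Gauss diagram with two linked chords (Gauss word $a\,b\,a\,b$) already fails this: the two endpoints of each arrow sit at positions of the same parity, so no choice of crossing data makes it alternating. Your greedy traversal argument breaks for the same reason---fixing the second occurrence of a crossing retroactively alters its first occurrence, which may destroy alternation already established.

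This is exactly where the AC hypothesis enters, and the paper uses it essentially: an AC diagram is homologically trivial on its Carter surface, hence checkerboard colorable, and N.~Kamada proved (Lemma~7 of \cite{nkam}) that checkerboard-colorable virtual diagrams are precisely those that can be made alternating by crossing changes. You need to invoke this (or reprove the equivalent parity condition: every arrow of an AC Gauss diagram has index $0$, hence even index, which is the mod-$2$ obstruction to alternatability). Your treatment of the second assertion---reading off the face boundaries as state circles via the local picture at an alternating crossing---is correct and is essentially the paper's argument (its Figure~\ref{fig_all}); your worry about global two-colorability on $\Sigma$ is in fact the same issue as the first part, and is resolved once you know the diagram is checkerboard colorable.
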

\begin{proof} An AC knot is homologically trivial on its Carter surface. All such virtual knots are checkerboard colorable. N. Kamada proved in \cite{nkam}, Lemma 7, that every checkerboard colorable virtual knot can be transformed to a alternating diagram by a finite number of crossing changes. We remark that $D_{\text{alt}}$ is then found by changing both the direction and sign of the corresponding arrows of $D$. Note that any crossing change on an AC knot diagram does not affect the handle decomposition of the Carter surface. Finally, it follows from Figure \ref{fig_all} that the boundary of each $2$-handle $d_k$ is either a component in the all $A$ state of $D_{\text{alt}}$ or the all $B$ state of $D_{\text{alt}}$.
\end{proof}

\begin{figure}[htb]
\begin{tabular}{c}
\def\svgwidth{3in} 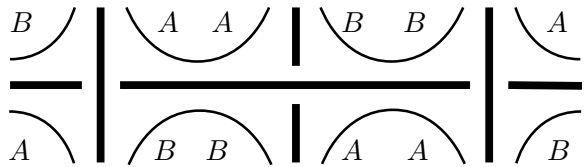
\end{tabular}
\caption{The all $A$ and all $B$ states of an alternating virtual knot diagram.} \label{fig_all}
\end{figure}

We may now write $\partial_2:C_2 \to C_1$ as a matrix in the bases $\{c_1,\ldots,c_{2n}\}$ of $C_1$ and $\{d_1,\ldots,d_m\}$ of $C_2$.  After making $D$ into an alternating diagram $D_{\text{alt}}$, find the components of the all $A$ and all $B$ states. Figure \ref{fig_seif_cross} indicates the effect of the smoothing on a signed arrow. The components are $\partial_2 d_1,\ldots,\partial_2 d_m$. To determine the signs of the contributions of the $c_i$ to $\partial_2 d_k$, we must orient each $\partial_2 d_k$. Draw a classical crossing of $D$ and label it with all of the $1$- and $2$-handles that are incident to it. Give each $2$-handle $d_k$ the counterclockwise orientation. Each edge of $\partial_2 d_k$ is then either $\pm c_i$, according to whether it ``goes with'' $c_i$ ($+$) or ``goes against'' $c_i$ ($-$). Transferring the orientation of the edge back to $D_{\text{alt}}$ indicates the orientation of each edge of $\partial_2 d_k$.  This orients all the $2$-handles incident the chosen crossing in both the all $A$ and all $B$ states. To orient all the other $2$-handles, observe that in each state, the orientations must alternate while traversing the circle of $D_{\text{alt}}$.
\newline
\newline
\textbf{Example (4.99):} A Gauss diagram of $4.99$ is given in Figure \ref{fig_4pt99}. This may be made alternating by changing the direction and sign of the two $+$ signed arrows. The resulting diagram has all $-$ signed arrows. The all $A$ state is then found by choosing the disoriented smoothing at each crossing (see Figure \ref{fig_seif_cross}). The all $B$ state takes, in this case, the oriented smoothing at each crossing. To compute $\partial_2:C_2 \to C_1$, label the eight arcs of $D$ as $c_1,\ldots,c_8$. These $1$-handles generate $C_1$. Similarly, label each of the components of the all $A$ and all $B$ states: $d_1,d_2,d_3,d_4$. Orient the $2$-handles by drawing one crossing and all the $2$-handles incident to it (see Figure \ref{fig_4pt99}, top right). The columns of $\partial_2$ can then be easily read off the figure. We use the symbol $\uptau$ to denote the transpose.
\[                                                                             
\partial_2^{\uptau} = \left[\begin{array}{cccccccc}  1  & -1 & 1 & -1  & 1 & -1 & 1    & -1  \\ 
                                                                               -1 & 0  &  0 & 0 &  -1  & 0 &  0  & 0 \\ 
                                                                               0  & 1  & 0 &  1  &  0 & 1  &  0  &  1   \\ 
                                                                               0  & 0  & -1 &  0  &  0 & 0  &  -1  &  0   \end{array} \right] \begin{array}{c} d_1 \\ d_2 \\ d_3 \\ d_4 \end{array}
\]
Two additional examples of the computation of $\partial_2:C_2 \to C_1$ are given in Figures \ref{fig_5pt2025} and \ref{fig_6pt87548}.
\begin{figure}[htb]
\begin{tabular}{|cc|} \hline & \\
\begin{tabular}{c} 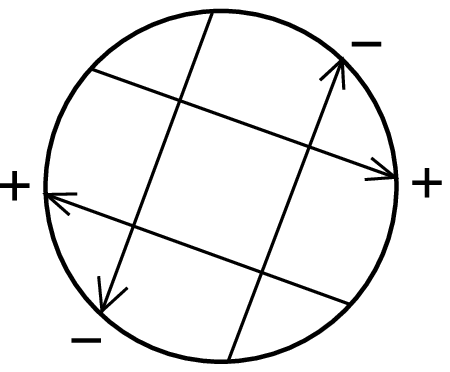 \\ \underline{$D=4.99$} \end{tabular}& \begin{tabular}{|c|} \hline \\ \def\svgwidth{1.5in} 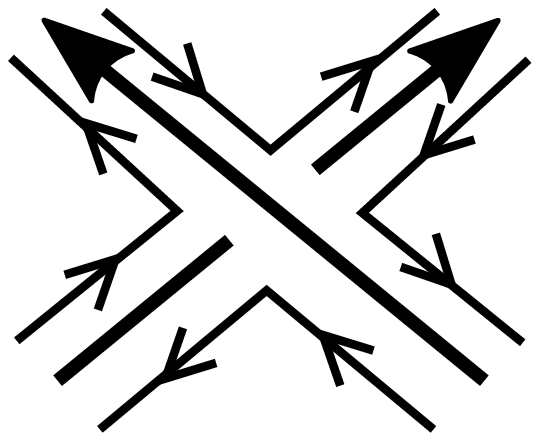 \\ \\ \underline{Orient $2$-handles} \\ \hline \end{tabular} \\
\begin{tabular}{c} 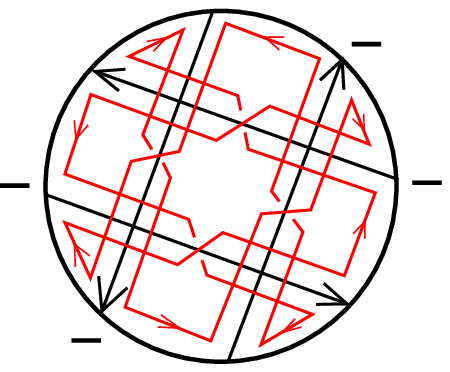 \\ \underline{All $A$ state of $D_{\text{alt}}$} \end{tabular} & \begin{tabular}{c} 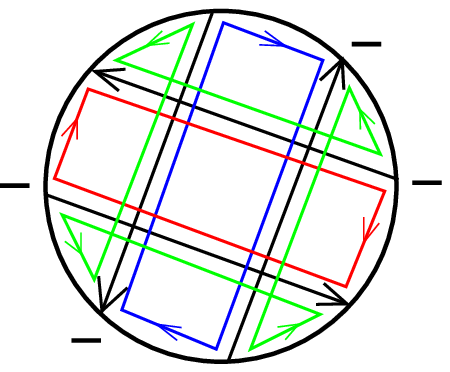 \\ \underline{All $B$ state of $D_{\text{alt}}$} \end{tabular}\\ & \\ \hline
\end{tabular}
\caption{Computing the homology of the Carter surface for $4.99$. After making an alternating diagram, find the all $A$ state and all $B$ state.} \label{fig_4pt99}
\end{figure}

\section{Virtual Band Presentations and Seifert Surfaces}  \label{sec_vss_defn}

To compute a Seifert matrix for a classical knot in $S^3$, one begins with a Seifert surface of the knot and then performs an isotopy to a disc-band presentation. Seifert matrices, and hence invariants such as the Alexander polynomial and the signature, can then be computed from the disc-band presentation. In this section, we give the formal definitions of virtual band presentations and virtual Seifert surfaces. It will be shown that every AC knot has a virtual Seifert surface and every virtual Seifert surface can be deformed to a virtual band presentation. 

\subsection{Virtual Band Presentations} \label{sec_vband} Let $B$ be an oriented disc embedded in $\mathbb{R}^2$. In $\overline{\mathbb{R}^2\smallsetminus B}$, attach smooth arcs $a_1,\ldots,a_n$ to $\partial B$ so that the set of $2n$ points $\partial a_1 \cup \cdots \cup \partial a_n$ are all distinct. Furthermore, we assume that the arcs intersect each other and themselves only in virtual or classical crossings. This can be viewed as a virtual tangle $\tau$ drawn in the disc $\overline{S^2\smallsetminus B}$. Fatten each arc $a_i$ slightly into a immersed band $b_i$ in $\overline{\mathbb{R}^2\smallsetminus B}$ so that there is a classical crossing of bands at each classical crossing and a virtual crossing of bands at each virtual crossing (see Figure \ref{fig_virt_band_cross}). The union $F_{\tau}$ of the disc $B$ together with these bands is called a \emph{virtual band presentation} with underlying virtual tangle $\tau$. The orientation of $B$ determines the orientation of the immersed surface $F_{\tau}$. 
\newline
\newline
Note that $\overline{\left(\partial B \smallsetminus \bigcup \partial b_i\right)} \cup \overline{\left(\left(\bigcup \partial b_i\right) \smallsetminus \partial B\right)}$ constitutes a virtual link diagram $L_{\tau}$, where the natural choices of over, under, and virtual crossings of arcs are made at each double point. We will say $L_{\tau}$ \emph{virtually bounds}  $F_{\tau}$. An example of a virtual band presentation is given in Figure \ref{fig_virt_band_present}. It will be seen later that its virtual boundary is $L_{\tau} \leftrightharpoons 4.99$.
\newline

\begin{figure}[htb]
\begin{tabular}{|cc|} \hline & \\
\def\svgwidth{1.25in}
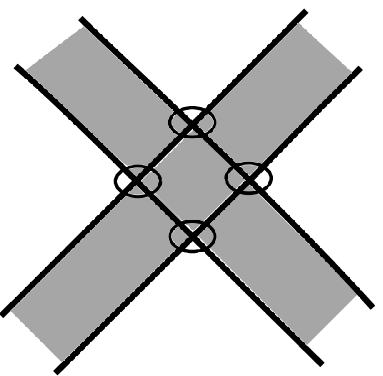 & \def\svgwidth{1.25in}
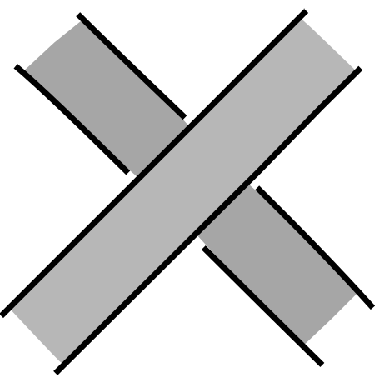 \\ \hline
\end{tabular}
\caption{A virtual crossing of bands (left) and a classical crossing of bands (right).} \label{fig_virt_band_cross}
\end{figure}

\begin{figure}[htb]
\begin{tabular}{|ccc|} \hline & & \\
\def\svgwidth{1.75in}
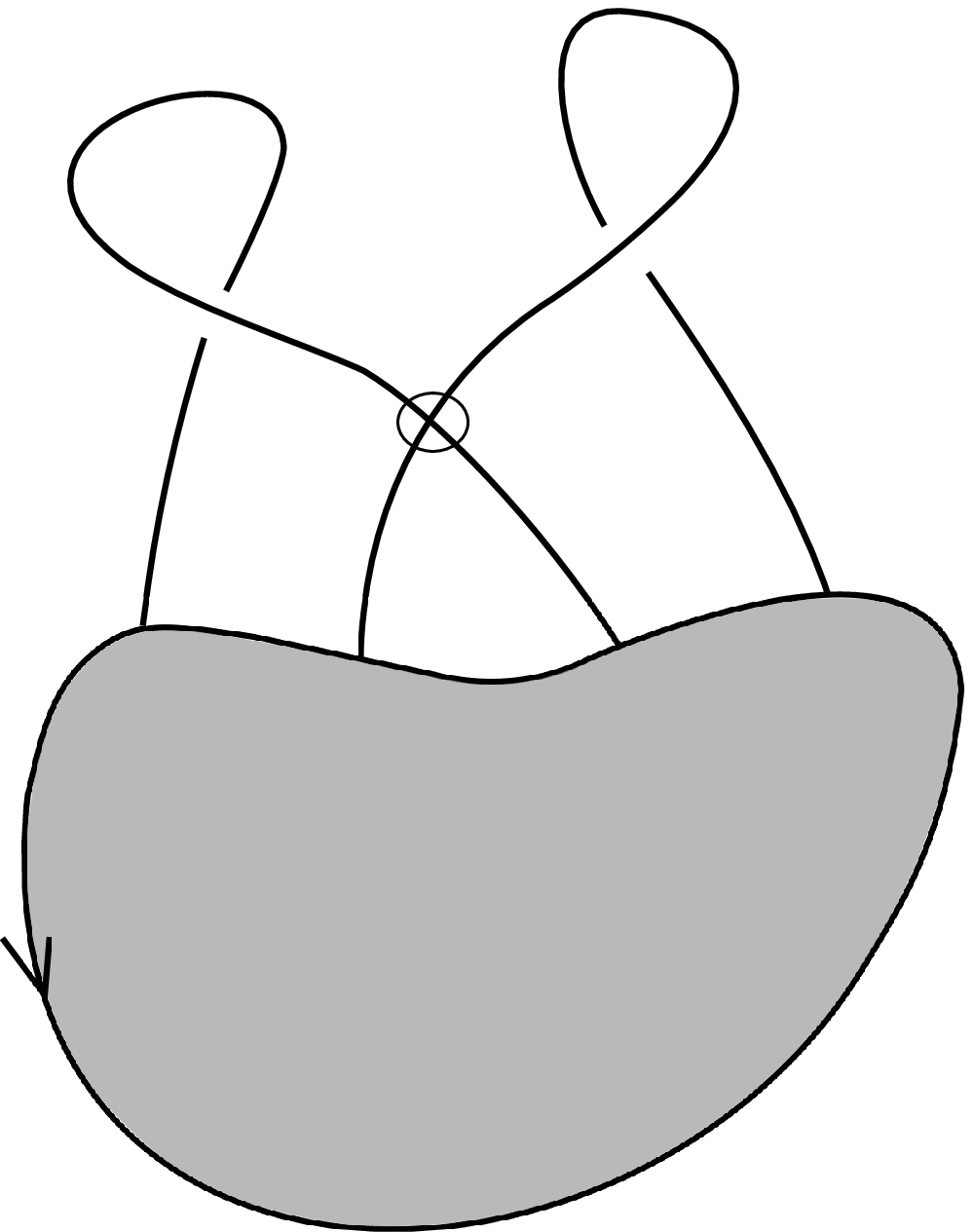 & & \def\svgwidth{1.75in}
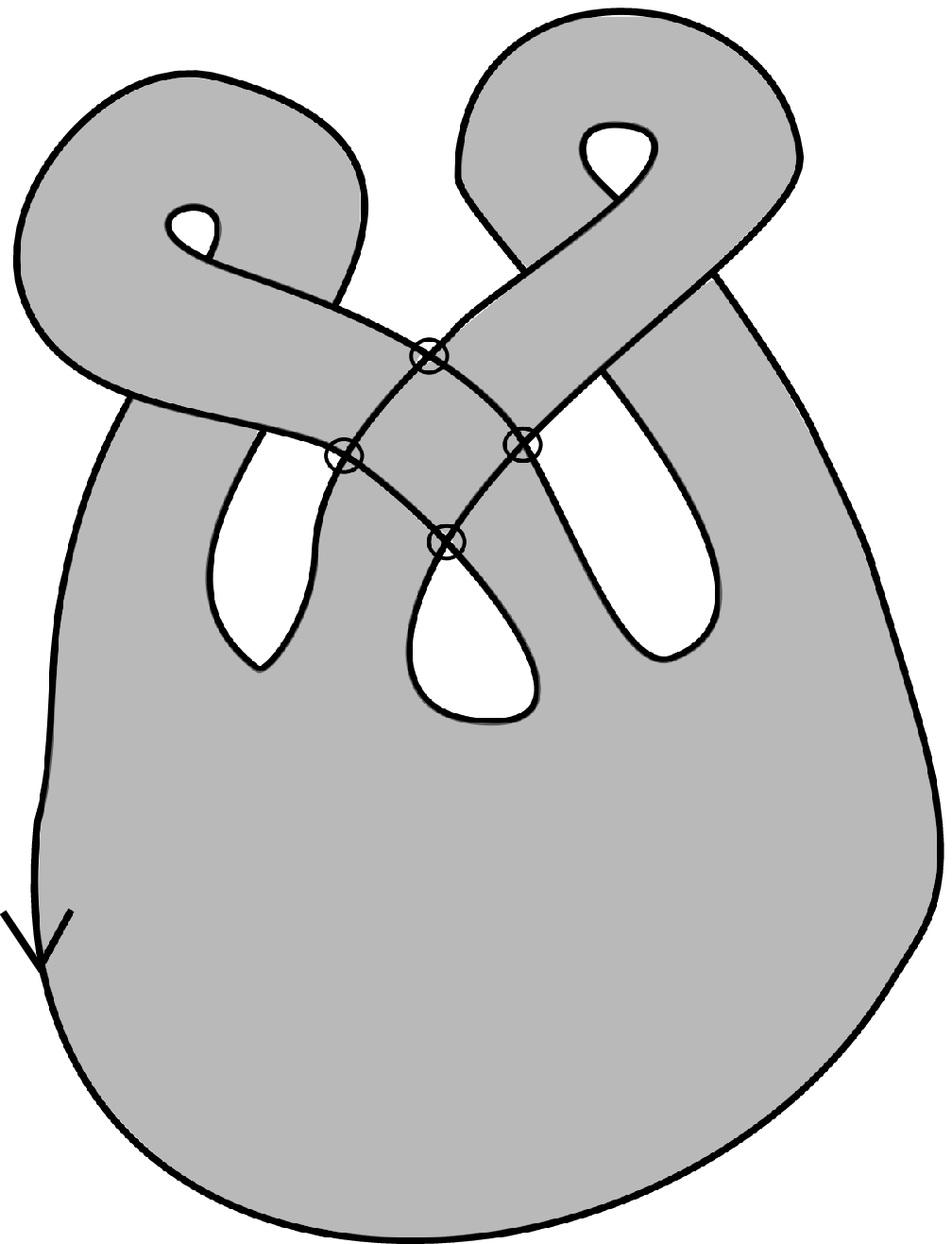 \\ \hline
\end{tabular}
\caption{An virtual tangle (left) and virtual band presentation (right).} \label{fig_virt_band_present}
\end{figure}

Virtual band presentations were introduced in \cite{bcg2} (see v2). They were used to show that if $(V^-,V^+)$ is a pair of integral matrices satisfying $\det(V^--V^+)=1$ and $V^++(V^+)^{\uptau}=V^-+(V^-)^{\uptau}$, then there is an AC knot having $V^{\pm}$ as its pair of directed Seifert matrices (for definitions, see Section \ref{sec_link} ahead). The AC knot that realizes the pair $(V^-,V^+)$ virtually bounds a virtual band presentation. Conversely, our goal is to draw a Seifert surface from an AC Gauss diagram. 

\subsection{Virtual Seifert Surfaces} \label{sec_vss_gen} A useful feature of Seifert surfaces for classical knots is that they can be deformed into disc-band presentations via pictures in the plane. In order to do this in the virtual setting, it is necessary to have a planar representation of the ambient space $\Sigma \times I$. Any compact oriented surface $\Sigma$ with at least one boundary component can be immersed into $\mathbb{R}^2$. The image of the immersion can be thought of intuitively as a screen onto which an actor (e.g. a knot $\mathscr{K}$ or Seifert surface $F$ in $\Sigma \times I$) can be projected. Places where the immersed surface $\Sigma$ overlaps itself are the only places where the images of $\mathscr{K}$ or $F$ can have virtual crossings. A precise definition of \emph{projector} and \emph{screen} is needed to define virtual Seifert surfaces (Definition \ref{defn_vss}) and manipulate them (Section \ref{sec_step_6}).
\newline
\newline
Let $\Xi$ be a compact connected oriented smooth surface with $\partial \,\Xi \ne \emptyset$. Fix a handle decomposition of $\Xi$ into $0$-handles $B_1,\ldots,B_n$ and $1$-handles $O_1,\ldots,O_m$, where each $1$-handle is attached to $\bigcup_{i=1}^n \partial B_i$. Suppose that $\chi:\Xi \to \mathbb{R}^2$ is an orientation preserving immersion that embeds the $B_i$ disjointly in the plane and immerses the $O_j$ in $\overline{\mathbb{R}^2\smallsetminus (\bigcup_{i=1}^n \chi (B_i))}$ such that images of the $1$-handles can intersect themselves and each other only in virtual crossings of bands (see Figure \ref{fig_virt_band_cross}, left). Then a \emph{virtual screen} (or simply \emph{screen}) is the image $X$ of $\chi$. The map $\chi:\Xi \to X$ is a \emph{projector} onto $X$. The overlaps of the immersed $1$-handles are called the \emph{virtual regions}. Each virtual region $R$ is the image of two embedded discs $R' \subset O_i$ and $R'' \subset O_j$ for some $i,j$. In Figure \ref{fig_virt_screen_defn}, $\Xi \subset \mathbb{R}^2 \times \mathbb{R}$ and $\chi$ is the restriction of the canonical projection onto the first factor, i.e. $\mathbb{R}^2 \times \mathbb{R} \to \mathbb{R}^2$.
\newline


\begin{figure}[htb]
\begin{tabular}{|c|} \hline \\
$
\xymatrix{
\begin{array}{c} \def\svgwidth{2.75in}
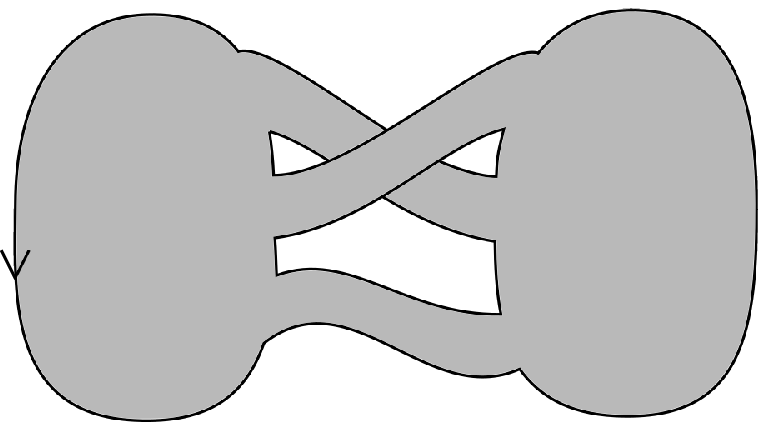\end{array} 
\ar[r]^{\chi} &
\begin{array}{c} \def\svgwidth{2.75in}
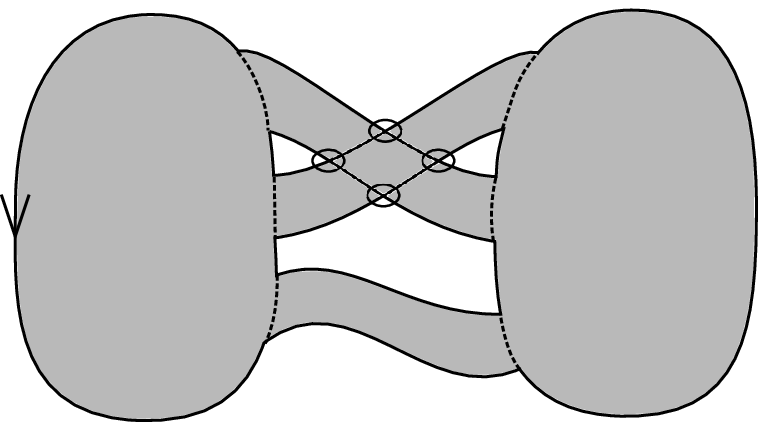\end{array}
} 
$ 
\\ \hline
\end{tabular}
\caption{A surface $\Xi$ (left), a virtual screen $X$ (right), and a projector $\chi:\Xi \to X$.} \label{fig_virt_screen_defn}
\end{figure}


The purpose of the projector and screen is to transfer information from the ambient space of a knot or link $\mathscr{L} \subset \Xi \times I$ to the virtual setting unambiguously. Indeed, suppose $L$ is a link diagram on $\Xi$ and $\chi:\Xi \to X$ is a projector onto $X$. We wish to interpret $\chi(L)$ as a virtual link diagram which $L$ represents. We may assume, after an isotopy if necessary, that $\chi(L)$ is a generically immersed curve in $X \subset \mathbb{R}^2$ and that for all virtual regions $R$ of $X$, no classical crossings of $L$ occur in $\chi^{-1}(R)\approx R' \sqcup R''$. Thus, $L$ intersects each $\chi^{-1}(R)$  in a (possibly empty) union of disjoint arcs. If the image of these arcs intersect in $R$, then they must lie in separate components of $\chi^{-1}(R)$. These intersections can thus be unambiguously interpreted as virtual crossings. Hence each such intersection in $\mathbb{R}^2$ is marked as  a virtual crossing. On the other hand, $\chi$ is one-to-one on a small neighborhood of each classical crossing of $L$, so these intersections may be unambiguously interpreted as classical crossings. We will denote by $\chi(L)$ the resulting virtual link diagram.

\begin{definition}[Virtual Seifert Surface] \label{defn_vss} A \emph{virtual Seifert surface} of an AC knot diagram $\upsilon$ is: a projector $\chi:\Xi \to X$ onto a screen $X$, a knot diagram $K$ on $\Xi$ such that $\chi(K)=\upsilon$, and a Seifert surface $F$ of the corresponding knot $\mathscr{K}$ in $\Xi \times I$. It is denoted by $(F;\chi:\Xi \to X)$. 
\end{definition}  

\begin{theorem}\label{thm_exist} For every AC Gauss diagram $D$, there is an AC virtual knot diagram $\upsilon$ such that $\upsilon$ has Gauss diagram $D$ and $\upsilon$ has a virtual Seifert surface $(F;\chi:\Xi\to X)$.
\end{theorem}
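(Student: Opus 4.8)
The plan is to carry out the virtual Seifert surface algorithm sketched in the Introduction, which refines the constructive proof of $(2)\Rightarrow(3)$ from \cite{acpaper}: build the Carter surface, produce inside it a Seifert surface that misses one prescribed $2$-handle, and then pass to the surface with boundary obtained by deleting that handle, which can be immersed in the plane to serve as the screen. Throughout, $D$ itself supplies all the combinatorial data.

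First I would build the Carter surface $\Sigma$ with its standard handle decomposition — $0$-handles $B_1,\dots,B_n$ at the arrows $z_i$, $1$-handles $O_1,\dots,O_{2n}$ with cores the arcs $c_j$, $2$-handles $d_1,\dots,d_m$ — and realize $D$ as a knot diagram $K$ on $\Sigma$ lying in a neighbourhood of the $1$-skeleton, so that the signs, over/under data and cyclic order of the classical crossings reproduce $D$. Since $D$ is AC, $K$ is homologically trivial. Orienting each $c_j$ along $K$, the cellular $1$-chain of $K$ is $\sum_j c_j$; performing the oriented smoothing at every crossing yields the Seifert cycles $s_1,\dots,s_p$, and because the smoothing preserves the arcs and their orientations, $\sum_i s_i=\sum_j c_j$ as a $1$-cycle. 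As $[K]=0\in H_1(\Sigma;\mathbb Z)$, there is a $2$-chain $\sigma=\sum_k n_k d_k$ with $\partial_2\sigma=\sum_j c_j$, which one computes from the matrix of $\partial_2$ obtained via Lemma \ref{lemma_AB}. Since $\Sigma$ is closed, connected and oriented, $\ker\partial_2=H_2(\Sigma;\mathbb Z)=\mathbb Z\langle\sum_k d_k\rangle$, so replacing $\sigma$ by $\sigma-n_m\sum_k d_k$ I may assume $n_m=0$; this normalization is what pins the surface off the handle $d_m$.

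Next I would assemble the Seifert surface $F\subset(\Sigma\smallsetminus\mathring d_m)\times I$ realizing $\sigma$: for each $k<m$ place $|n_k|$ parallel copies of the $2$-handle $d_k$ at distinct heights, oriented by the sign of $n_k$, and join them by bands running over the $1$-handles. The key point is that over each $O_j$ the net number of sheets is the coefficient of $c_j$ in $\partial_2\sigma$, namely $+1$, so after a monotone matching of sheets exactly one sheet has free boundary there (no folds appear), making the result an embedded oriented subsurface whose boundary projects onto the Seifert cycles; this is the construction of \cite{acpaper} with the subsurfaces pinned down by $\sigma$, and its planar implementation is precisely what Sections \ref{sec_vss}--\ref{sec_step_6} develop. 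Attaching a half-twisted band at each classical crossing of $K$ with the matching sign and discarding closed components gives a compact connected oriented $F$ with $\partial F=\mathscr K$, disjoint from $\mathring d_m\times I$. Finally set $\Xi=\Sigma\smallsetminus\mathring d_m$, a compact connected oriented surface with $\partial\,\Xi\neq\emptyset$ carrying both $K$ and $F$, and choose an orientation-preserving immersion $\chi\colon\Xi\to\mathbb R^2$ embedding the $B_i$ disjointly and immersing the $O_j$ with only virtual crossings of bands — possible because $\Xi$ retracts to a graph, which immerses generically in $\mathbb R^2$ and thickens inside $\Xi$ — taken generic so the classical crossings of $K$, which lie in the $B_i$, avoid the virtual regions. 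With $X=\chi(\Xi)$ this is a projector, $\chi(K)$ is a legitimate virtual knot diagram in the sense of Section \ref{sec_vss_gen}, and since $\chi$ introduces only virtual crossings, $\upsilon:=\chi(K)$ has Gauss diagram $D$ and is AC; thus $(F;\chi\colon\Xi\to X)$ meets Definition \ref{defn_vss}.

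The main obstacle is the geometric realization in the third step: converting the homological identity $\partial_2\sigma=\sum_j c_j$ into an honest embedded oriented surface in $\Xi\times I$ by stacking signed copies of $2$-handles and routing the connecting bands over the $1$-handles and around the $0$-handles consistently. Local consistency at each $0$-handle is forced by $\partial(\partial_2\sigma)=0$, but the height bookkeeping and band routing is where the real work lies — it is exactly the content of Sections \ref{sec_vss}--\ref{sec_step_6}, and it is what distinguishes the present construction from the non-explicit existence statement of \cite{acpaper}. A lesser point is arranging $\chi$ generically enough that $\chi(K)$ is a well-formed virtual diagram (no classical crossing in a virtual region) while still recovering $D$.
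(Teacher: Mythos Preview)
Your route is viable but considerably more elaborate than the paper's. The paper proves Theorem~\ref{thm_exist} non-constructively in a few lines: delete a small open disc from the interior of any one $2$-handle of the Carter surface to obtain $\Xi$; since $K$ remains homologically trivial on $\Xi$, the knot $\mathscr K\subset\Xi\times I$ bounds \emph{some} Seifert surface $F$ by the abstract existence result already available from \cite{acpaper}, with no explicit construction; and since $\Xi$ has a single boundary component, the classification of surfaces supplies a diffeomorphism onto the standard genus-$g$ model of Figure~\ref{fig_seif_exist}, which carries an obvious projector. All the explicit construction of $F$ and of the screen from the Gauss data is deferred to the algorithm in Section~\ref{sec_vss}. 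Your argument effectively previews that algorithm---using one global $2$-chain $\sigma$ rather than the paper's partitioned spanning solution with $\{0,\pm1\}$ coefficients---which buys concreteness but incurs exactly the height-bookkeeping and band-routing work you flag, none of which Theorem~\ref{thm_exist} itself requires.

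One detail needs adjustment. You propose that the projector embed the Carter $0$-handles $B_i$ disjointly and immerse the Carter $1$-handles $O_j$, but $\bigcup_i B_i\cup\bigcup_j O_j$ is the $1$-skeleton $\Sigma^{(1)}$, not your $\Xi=\Sigma\smallsetminus\mathring d_m$, which still contains the $2$-handles $d_1,\dots,d_{m-1}$; so the Carter $B_i,O_j$ do not give a handle decomposition of $\Xi$ into $0$- and $1$-handles as the definition of a projector demands. Your parenthetical justification (retract $\Xi$ to a graph, immerse generically, thicken) is a correct existence argument for \emph{some} projector on $\Xi$, but its handles will not be the Carter ones. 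The paper handles this in two ways: in the proof of Theorem~\ref{thm_exist} by passing to the standard model via classification, and in Section~\ref{sec_vss} by taking the Carter $2$-handles in the spanning solution as the screen's $0$-handles and the lower-dimensional Carter handles as the connecting bands.
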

\begin{proof} Let $\Sigma$ be the Carter surface for $D$ and $K$ the knot diagram on $\Sigma$. Note that $D$ is also the Gauss diagram of $K$ on $\Sigma$. Since $K$ is an AC diagram, it is homologically trivial on $\Sigma$. Let $z_0$ be any point interior to some $2$-handle $d_k$ of $\Sigma$. Then $K$ is homologically trivial on $\Xi:=\Sigma\smallsetminus V(z_0)$, where $V(z_0) \subset d_k$ is a small tubular neighborhood of $z_0$. Moreover, the corresponding knot $\mathscr{K}$ in $\Xi \times I$ bounds a Seifert surface $F$ in $\Xi \times I$. Let $\chi':\Xi' \to X$ be the projector onto the screen $X$ shown in Figure \ref{fig_seif_exist}. Here $\Xi'$ is given as a surface in $\mathbb{R}^3\approx \mathbb{R}^2 \times \mathbb{R}$ and $\chi': \Xi' \to X$ is projection to the plane. If $\Sigma$ has genus $g$, then the depicted $\Xi'$ has $2g$ bands attached to the disc. Since $\Xi$ has one boundary component, there is an orientation preserving diffeomorphism from $\Xi$ onto the surface $\Xi'$.  Then $(F;\chi:\Xi\approx \Xi' \to X)$ is a virtual Seifert surface of $\upsilon:=\chi(K)$.
\end{proof}

\begin{figure}[htb]
\begin{tabular}{|c|} \hline \\
$
\xymatrix{
\begin{array}{c} \def\svgwidth{2.75in}
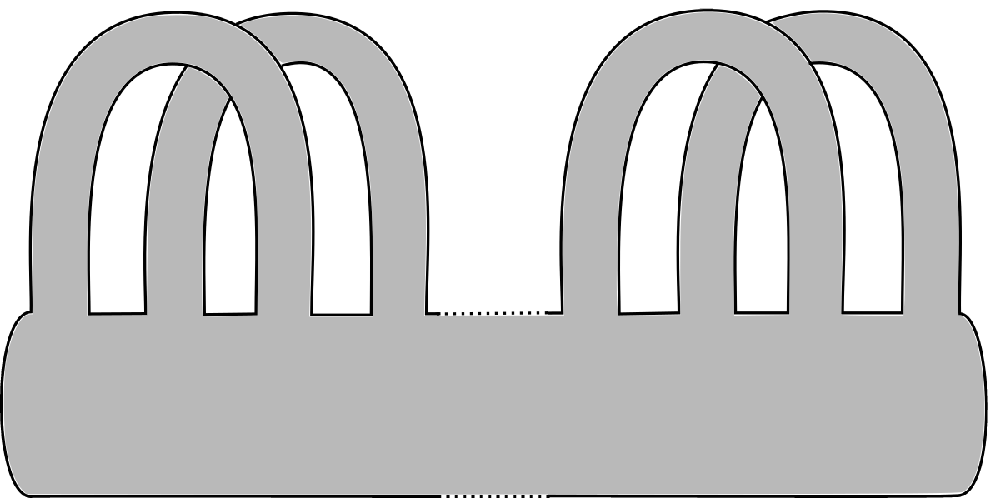\end{array} 
\ar[r]^{\chi'} &
\begin{array}{c} \def\svgwidth{2.75in}
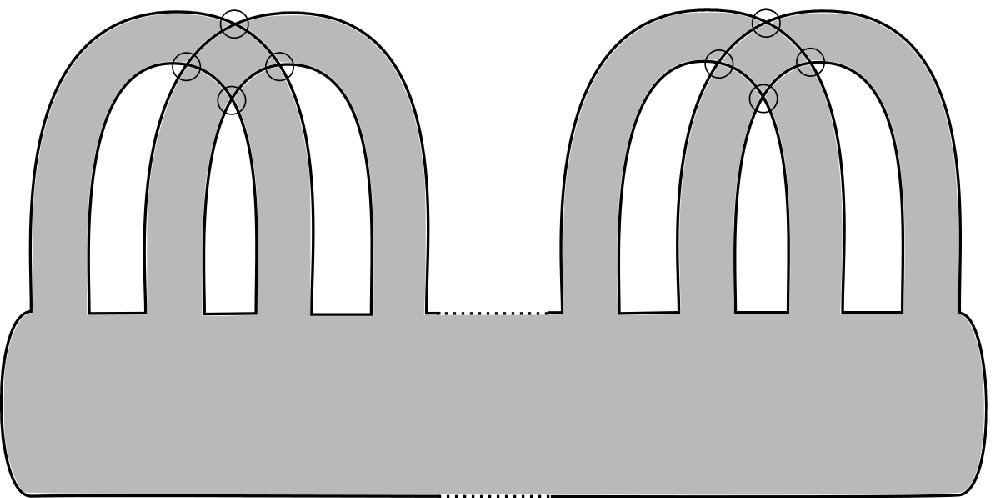\end{array}
} 
$ 
\\ \hline
\end{tabular}
\caption{Screen $X$ and projector $\chi:\Xi \to X$ used in the proof of Theorem \ref{thm_exist}.} \label{fig_seif_exist}
\end{figure}

For a virtual Seifert surface $(F;\chi:\Xi \to X)$ of $\upsilon$, we denote by $\chi(F)$ the image of $F$ under the map $\chi \circ \pi:\Xi \times I \to X$, where $\pi:\Xi \times I \to \Xi$ is canonical projection to the first factor. Since $\chi(\partial F)=\upsilon$, we will say that $\upsilon$ \emph{virtually bounds} $F$. Note that $\chi(F)$ is not necessarily an immersion of $F$. In Section \ref{sec_vss}, it will be shown how to construct a surface $F$ from $D$ and draw $\chi(F)$ in the plane. On the other hand, a virtual Seifert surface can be reconstituted from its image in the case that the image is a virtual band presentation. Furthermore, the following theorem shows that every AC knot has virtual band presentation.

\begin{theorem} \label{thm_vss_band}  For every virtual Seifert surface $(F;\chi:\Xi \to X)$ of $\upsilon$, there is a virtual Seifert surface $(F',\chi:\Xi \to X)$ of $\upsilon'$ such that $\chi(F')=F'_{\tau}$ is a virtual band presentation, $F'$ and $F$ are ambient isotopic in $\Xi \times I$, and $\upsilon \leftrightharpoons \upsilon'$. Conversely, every virtual band presentation $F_{\tau}$ can be realized as a virtual Seifert surface.
\end{theorem}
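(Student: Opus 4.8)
The plan is to prove the two implications separately: for the forward one, to imitate the classical fact that a Seifert surface can be isotoped to a disc-band presentation, all inside the fixed ambient space $\Xi\times I$; for the converse, to build directly from $F_\tau$ a Carter-type surface carrying the band cores and then thicken it.

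\textbf{The forward implication.} Since $\partial F=\mathscr{K}$ is a knot, $F$ is a compact connected oriented surface with one boundary component, hence of some genus $g$, and $F$ is a regular neighbourhood in itself of a spine $G$, a wedge of $2g$ circles; writing $F=N_F(G)\cup(\text{collar on }\partial F)$ exhibits a handle decomposition of $F$ with one $0$-handle $D$ and $2g$ bands. Using the standard thickened-spine argument (cf.\ \cite{on_knots}) I would first ambient-isotope $F$ in $\Xi\times I$ so that the vertex of $G$ lies interior to some $0$-handle $B_1$ of $\Xi$ at level $\tfrac12$, then flatten a disc neighbourhood of that vertex in $F$ until it coincides with $B_1\times\{\tfrac12\}$; call the result $F'$, and note $\chi(D)=\chi(B_1)$ is an embedded disc $B$ in the plane. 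Because removing an open disc from a connected surface leaves it connected, $(\Xi\smallsetminus\operatorname{int}B_1)\times I$ is connected, so I would next push the $2g$ bands of $F'$, rel their attaching arcs on $\partial B_1\times\{\tfrac12\}$, into $(\Xi\smallsetminus\operatorname{int}B_1)\times I$; since the projector embeds $B_1$ and immerses the $1$-handles of $\Xi$ in the complement of $\bigcup_i\chi(B_i)$, the images of these bands under $\chi\circ\pi$ then avoid $\operatorname{int}B$. A general-position step completes the picture: a generic arc in $\Xi\times I$ is nowhere tangent to a fibre $\{\ast\}\times I$ (a codimension-two condition), so each band core may be isotoped to project to a fold-free immersed arc, and a further perturbation arranges that the band cores meet one another and themselves in finitely many transverse double points; each such point lies either over a point where $\Xi$ is embedded, giving a classical crossing of bands with over/under recorded by the $I$-coordinate, or over a virtual region of $X$, giving a virtual crossing of bands. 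Thickening, $\chi(F')=F'_\tau$ is then a virtual band presentation in the sense of Section~\ref{sec_vband}, with underlying tangle $\tau$ the union of the band cores; setting $\upsilon':=\chi(\partial F')=L_\tau$ and using that ambient isotopy in $\Xi\times I$ does not change the virtual knot type, we get $\upsilon'\leftrightharpoons\upsilon$ and $F'$ ambient isotopic to $F$, as required.

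\textbf{The converse implication.} Let $F_\tau=B\cup b_1\cup\cdots\cup b_n$, with band cores forming the virtual tangle $\tau$ drawn in $\overline{S^2\smallsetminus B}$, and assume its virtual boundary $L_\tau$ is a knot (a virtual Seifert surface is, by Definition~\ref{defn_vss}, attached to a knot). I would form $\Xi$ as a Carter-type surface of $\tau$ with $B$ incorporated: take $0$-handles to be $B$ together with small discs at the classical crossings and attaching points of $\tau$, and $1$-handles to run along the arcs of $\tau$, with the immersion $\chi\colon\Xi\to X\subset\mathbb{R}^2$ arranged, exactly as in the Carter surface construction of Section~\ref{sec_back}, so that its only self-overlaps occur at the virtual crossings of $\tau$. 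Then $\tau$ lifts to an embedded tangle $\widetilde\tau\subset\Xi\times I$ with $\chi(\widetilde\tau)=\tau$, the classical crossings realised by the $I$-coordinate inside the $0$-handles and the virtual crossings realised by the two sheets of $\chi$; letting $F$ be a regular neighbourhood of $B\cup\widetilde\tau$ in $\Xi\times I$, pushed off $B\times\{\tfrac12\}$ except along the band attaching arcs, yields a compact connected oriented surface with $\partial F$ the lift of $L_\tau$ and $\chi(F)=F_\tau$. Hence $(F;\chi\colon\Xi\to X)$ is a virtual Seifert surface of $\upsilon=\chi(\partial F)=L_\tau$.

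\textbf{Main obstacle.} The delicate part is the forward implication: the reduction of $F$ to disc-band form has to be done inside the \emph{fixed} ambient space $\Xi\times I$ and compatibly with the given projector, so that the $0$-handle projects to an embedded disc disjoint from every band and so that each remaining double point of the projection is genuinely a classical or virtual crossing of bands, with no folds, triple points, or tangencies against the excised $0$-handle region. Organising this as a sequence of explicit planar moves, which is what makes virtual Seifert surfaces usable in practice, is where the bulk of the work lies and is the content of Section~\ref{sec_step_6}; conceptually, however, every move is the virtual counterpart of one already available in the classical disc-band algorithm.
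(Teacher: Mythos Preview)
Your argument follows essentially the same route as the paper's proof: collapse $F$ onto a spine, realize it as a disc with framed $1$-handles in $\Xi\times I$, put the cores in general position with respect to the projection, and read off a virtual band presentation; for the converse, build a Carter-type surface from the tangle $\tau$. Two technical points deserve attention, though.

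First, your classical/virtual dichotomy for double points in $X$ is not quite complete. A double point of $\chi\circ\pi$ lying in a virtual region $R$ need not be virtual: the two arcs could genuinely intersect in $\Xi$ (after $\pi$) at a point of $\chi^{-1}(R)$, in which case the crossing is classical but sits inside a virtual region, and this is not allowed in a virtual band presentation. The paper handles this by first observing that $\chi^{-1}(R)$ is a disjoint union of two discs and then isotoping so that all intersections of the $a_i$ in $\Xi$ occur in $\Xi\smallsetminus\bigcup_R\chi^{-1}(R)$; once this is done, every double point in $X$ over a virtual region comes from distinct sheets and is unambiguously virtual.

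Second, you pass from band cores to bands by ``thickening'' without discussing framing. The $1$-handles $h_i$ carry a framing in $\Xi\times I$, and since $F$ is orientable this framing is an integer number of full twists; a virtual band presentation, however, is by definition a flat fattening of the arcs $a_i$. The paper resolves this by encoding the framing as a sequence of $\pm$-curls on each $a_i$ (Figure~\ref{fig_proj_oto}, top), so that the flat thickening of the curled arc reproduces the twisted band. Without this step, $\chi(F')$ need not literally be a virtual band presentation.

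Your placement of the disc inside a fixed $0$-handle $B_1$ of $\Xi$ is more restrictive than necessary (the paper just takes any small embedded disc $B_0\subset\Xi\times I$), but it is harmless and has the mild advantage that $\chi$ is automatically injective on it.
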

\begin{proof} For the first claim, note that since $F$ has one boundary component, there is an ambient isotopy in $\Xi \times I$ of $F$ onto a $1$-skeleton of $F$. The $1$-skeleton consists of a canonical system of curves on $F$ that meet in a single point. By a standard argument (see \cite{bz}, Proposition 8.2), this is ambient isotopic in $\Xi \times I$ to a surface consisting of an embedded disc $B_0$ in $\Xi\times I$ with framed $1$-handles $h_1,\ldots, h_n$ attached to $\partial B_0$. The cores of the $h_i$ can be arranged so that their images $a_i$ under the coordinate projection $\Xi \times I \to \Xi$ intersect transversely in double points. If $R$ is a virtual region of $X$, then $\chi^{-1}(R)$ is a disjoint union of two discs in $\Xi$. Hence any intersections of the $a_i$ on $\Xi$ may be assumed to occur in $\Xi \smallsetminus \bigcup_R \chi^{-1}(R)$. Since $F$ is orientable, the framing of each $h_i$ is some integer number of $\pm$-full twists. Thus the framing of each $h_i$ may be indicated by a sequence of $\pm$-curls (see Figure \ref{fig_proj_oto}, top left and right). Applying $\chi$ to $B_0 \cup a_1 \cup \cdots \cup a_n$ gives the desired virtual band presentation. For the second claim, construct $\Xi$ by applying the Carter surface algorithm (see Figure \ref{fig_zeroone_hand_attach}) to the tangle $\tau$ underlying $F_{\tau}$. Details are left to the reader.
\end{proof}

\section{Virtual Seifert Surface Algorithm}
\label{sec_vss}
\subsection{The algorithm in brief} \label{sec_exec} 
The main difference between the virtual Seifert surface algorithm and its classical counterpart is that in the former we draw the subsurfaces bounded by the Seifert cycles first. A virtual knot diagram does not appear until the very end. Subsurfaces are found by writing them as linear combinations of $2$-handles in the Carter surface. The linear combinations $Y_1,\ldots,Y_l$ of $2$-handles are called a \emph{spanning solution}. A virtual screen is constructed in $\mathbb{R}^2$ by gluing together the $2$-handles occurring in the spanning solution. The subsurfaces are then drawn on the virtual screen according to the spanning solution. Half-twisted bands can then be attached at the crossings to complete the virtual knot diagram. For convenience, we outline these steps below.
\newline
\newline
\centerline{
\fbox{\parbox{5.2in}{\underline{\textbf{Virtual Seifert Surface Algorithm:}} Let $D$ be an AC Gauss diagram and $\Sigma$ its Carter surface. Let $C_0,C_1,C_2$ be the standard handle decomposition of $\Sigma$.
\newline
\begin{tabular}{|c|}  \hline \sffamily 1 \\ \hline \end{tabular} Compute $\partial_2:C_2 \to C_1$ and Seifert cycles from $D$. 
\newline
\begin{tabular}{|c|}  \hline \sffamily 2 \\ \hline \end{tabular} Find a spanning solution $Y_1,\ldots,Y_l$ missing some $2$-handle $d_k$
\newline
\begin{tabular}{|c|}  \hline \sffamily 3 \\ \hline \end{tabular} Make the virtual screen $X$ from the $2$-handles in the spanning solution.
\newline
\begin{tabular}{|c|}  \hline \sffamily 4 \\ \hline \end{tabular} Glue together the $2$-handles in each $Y_j$ to make the subsurfaces. 
\newline
\begin{tabular}{|c|}  \hline \sffamily 5 \\ \hline \end{tabular} Attach half-twisted bands at the crossings to the subsurfaces. 
}}
}
\newline
\newline
Three illustrating examples will be used throughout: 4.99, 5.2025, and 6.87548. For the reader's convenience, a summary is given in Figures \ref{fig_4pt99_summ}, \ref{fig_5pt2025}, and \ref{fig_6pt87548}, respectively. Details of the algorithm are explained in the sections below. For Step 1, refer back to Section \ref{sec_homology}.

\begin{figure}[p]
\makebox[\textwidth][c]{
\begin{tabular}{|c|cc|} \hline
\multicolumn{3}{|l|}{\textbf{Step 1A:} Perform Seifert smoothing. Make alternating. Perform all $A$ and all $B$ smoothings. } \\ \hline & & \\
\underline{4.99, Seifert smoothing:} & \underline{All $A$ state:} & \underline{All $B$ state:} \\ & & \\
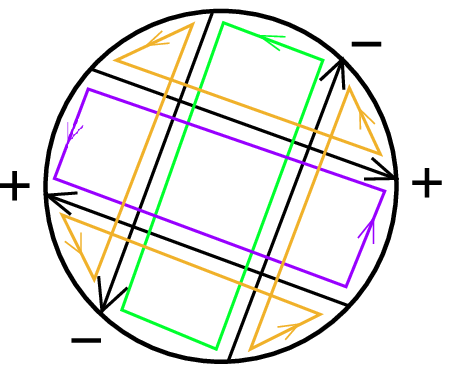 & \input{gd4pt99_2.eps_tex} & \input{gd4pt99_3.eps_tex}\\ & & \\
\begin{tabular}{c}
\underline{\textbf{Step 1B:} Orient $2$-handles.} \\ \\ \\ \def\svgwidth{1.5in} 
\input{discs_orient_4pt99.eps_tex} \\ (marked on diagrams above) \end{tabular} & \multicolumn{2}{|l|}{\begin{tabular}{l} \underline{\textbf{Step 1C:} Compute $\partial_2:C_2 \to C_1$ and Seifert cycles.} \\ \\ $\begin{array}{cll}
\partial_2^{\uptau} &=& \left[\begin{array}{cccccccc}  1  & -1 & 1 & -1  & 1 & -1 & 1    & -1  \\ 
                                                                               -1 & 0  &  0 & 0 &  -1  & 0 &  0  & 0 \\ 
                                                                               0  & 1  & 0 &  1  &  0 & 1  &  0  &  1   \\ 
                                                                               0  & 0  & -1 &  0  &  0 & 0  &  -1  &  0   \end{array} \right] \begin{array}{c} d_1 \\ d_2 \\ d_3 \\ d_4
\end{array}\\ \\
s_1^{\uptau} &=& \left[ \begin{array}{cccccccc} 1 & 0 & 0 & 0 & 1 & 0 & 0 & 0  \end{array} \right]\\ \\
s_2^{\uptau} &=& \left[ \begin{array}{cccccccc} 0 & 1 & 0 & 1 & 0 & 1 & 0 & 1  \end{array} \right]\\ \\
s_3^{\uptau} &=& \left[ \begin{array}{cccccccc} 0 & 0 & 1 & 0 & 0 & 0 & 1 & 0  \end{array} \right]\\ 
\end{array}$
\end{tabular}} \\ \hline \multicolumn{3}{|l|}{\underline{\textbf{Step 2:} Find a spanning solution.}}\\ \multicolumn{3}{|c|} {$s_1=\partial_2(-d_2), \,\,\,\, s_2=\partial_2(d_3),\,\,\, s_3=\partial_2(-d_4)$} \\ \hline
\multicolumn{3}{|l|}{\begin{tabular}{l|r} \underline{\textbf{Steps 3,4,5:} See full details in Figure \ref{fig_4pt99_screen}.} & \underline{Virtual band presentation of 4.99.} \\ & \\ \begin{tabular}{l} \def\svgwidth{3in} 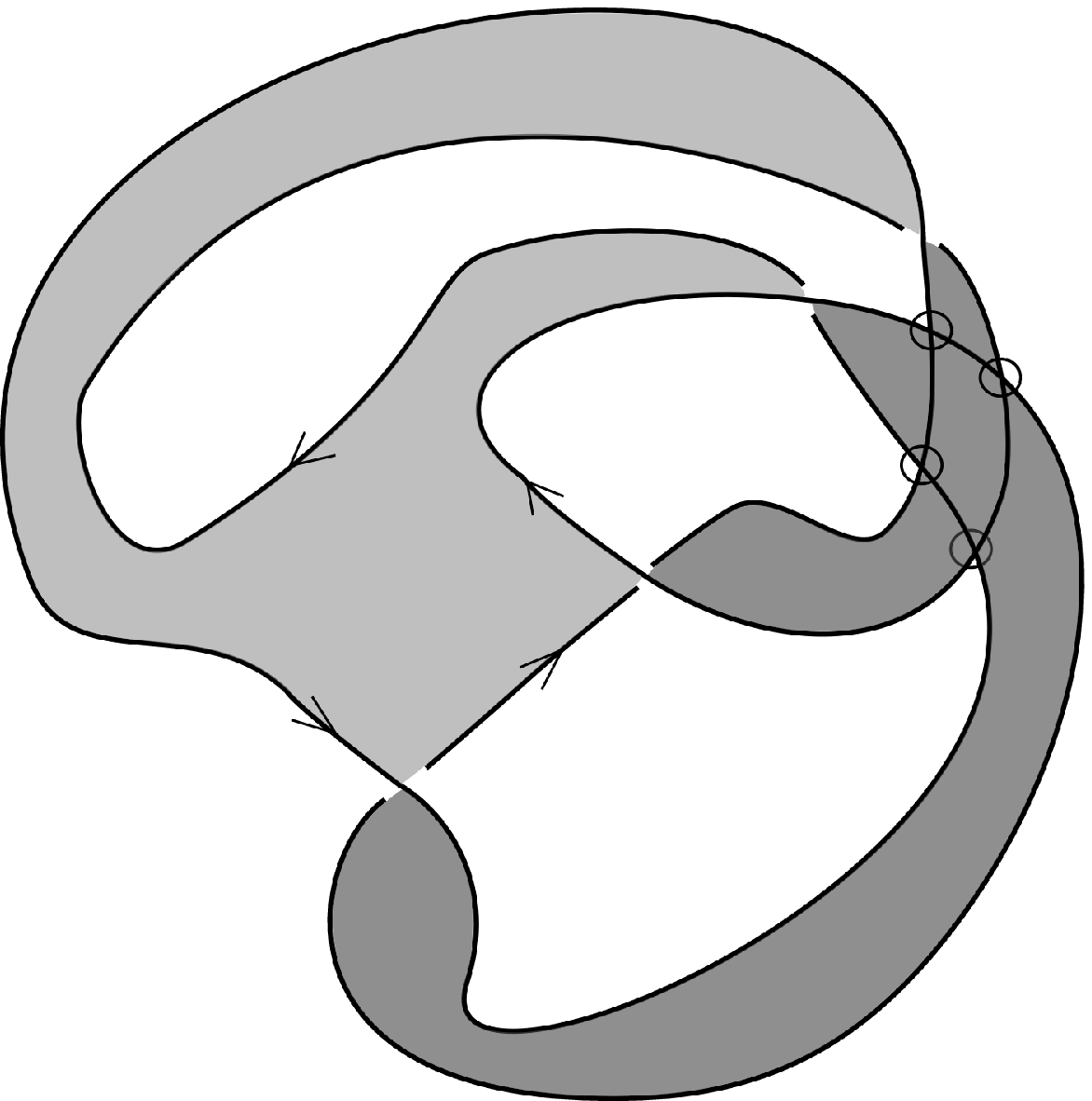 \end{tabular} & \begin{tabular}{c} \def\svgwidth{2.25in} 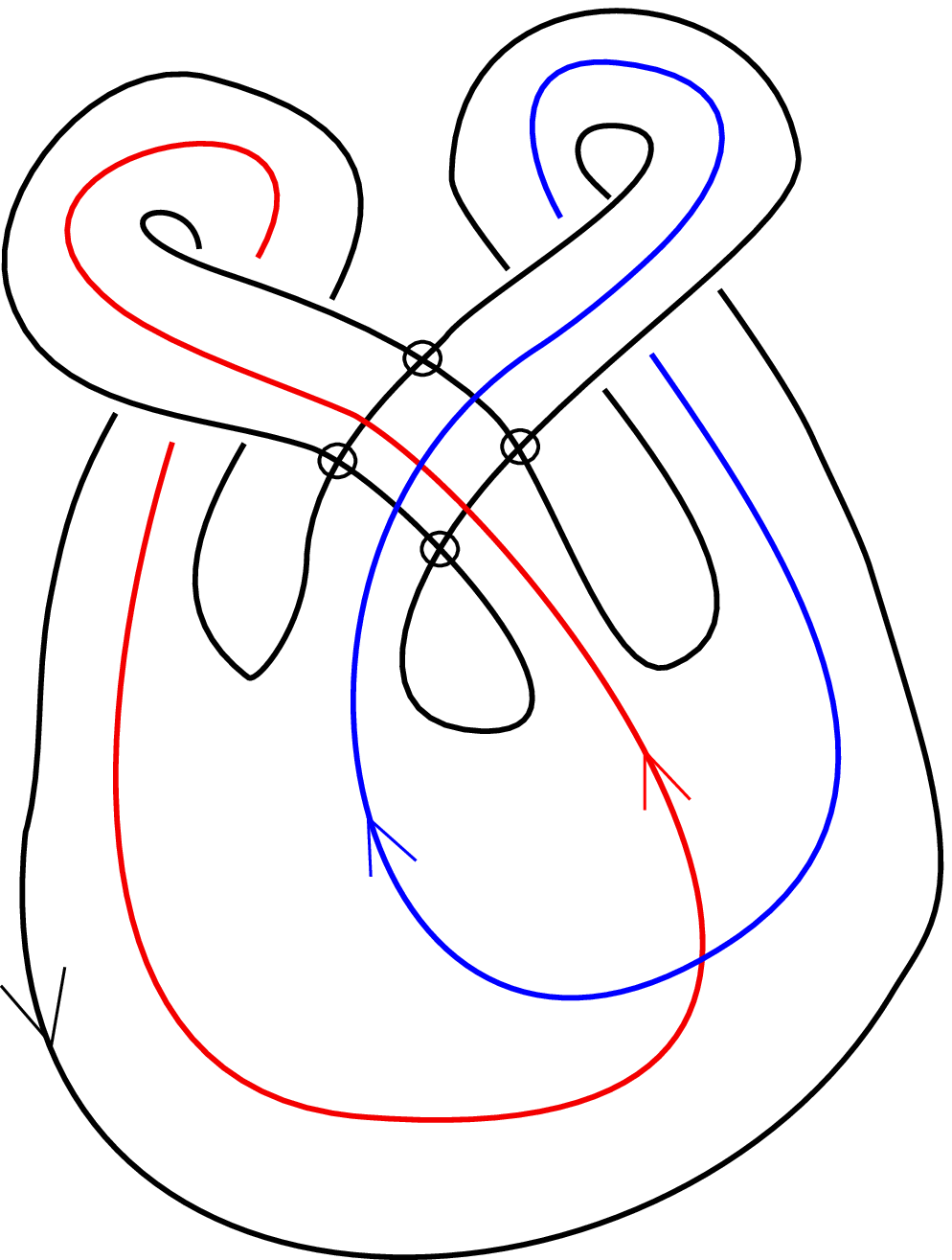\end{tabular} \end{tabular}} \\\hline
\end{tabular}}
\caption{Summary of virtual Seifert surface algorithm for 4.99.} \label{fig_4pt99_summ}
\end{figure}

\begin{figure}[p]
\makebox[\textwidth][c]{
\begin{tabular}{|c|cc|} \hline
\multicolumn{3}{|l|}{\textbf{Step 1A:} Perform Seifert smoothing. Make alternating. Perform all $A$ and all $B$ smoothings. } \\ \hline & & \\
\underline{5.2025, Seifert smoothing:} & \underline{All $A$ state:} & \underline{All $B$ state:} \\ & & \\
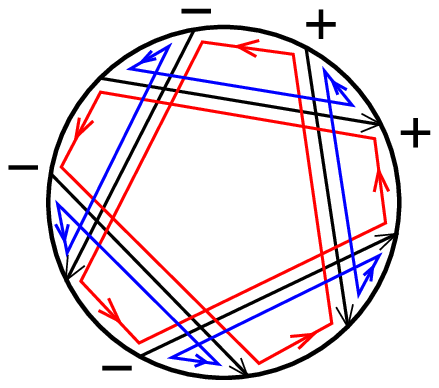 & 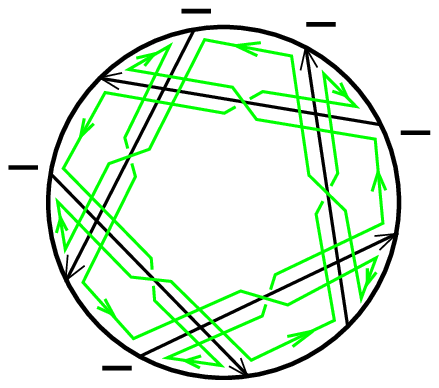 & 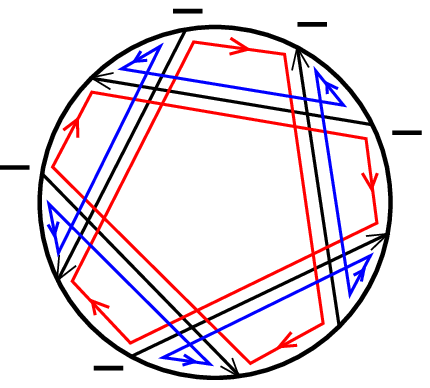\\ & & \\
\begin{tabular}{c}
\underline{\textbf{Step 1B:} Orient $2$-handles.} \\ \\ \\ \def\svgwidth{1.5in} 
\input{discs_orient_test.eps_tex} \\ (marked on diagrams above) \end{tabular} & \multicolumn{2}{|l|}{\begin{tabular}{l} \underline{\textbf{Step 1C:} Compute $\partial_2:C_2 \to C_1$ and Seifert cycles.} \\ \\ $\begin{array}{cll}
\partial_2^{\uptau} &=& \left[\begin{array}{cccccccccc} -1 & 1 & -1 & 1  & -1 & -1 & 1 & -1 & 1 & -1 \\ 
                                                                 0 & -1&  0 & -1 &  0  & -1 & 0 & -1 & 0 & -1 \\ 
                                                                 1 & 0  & 1 &  0  &  1 & 0  & 1  & 0  & 1 & 0 \end{array} \right] \begin{array}{c} d_1 \\ d_2 \\ d_3 \end{array}\\ \\
s_1^{\uptau} &=& \left[ \begin{array}{cccccccccc} 0 & 1 & 0 & 1 & 0 & 1 & 0 & 1 & 0 & 1 \end{array} \right]\\ \\
s_2^{\uptau} &=& \left[ \begin{array}{cccccccccc} 1 & 0 & 1 & 0 & 1 & 0 & 1 & 0 & 1 & 0 \end{array} \right]\\
\end{array}$
\end{tabular}} \\ \hline \multicolumn{3}{|l|}{\underline{\textbf{Step 2:} Find a spanning solution.}}\\ \multicolumn{3}{|c|} {$s_1=\partial_2(-d_2), \,\,\,\, s_2=\partial_2(d_3)$} \\ \hline
\multicolumn{3}{|l|}{\begin{tabular}{l|c} \underline{\textbf{Steps 3,4,5:} See full details in Figure \ref{fig_5pt2025_screen}.} & \underline{ Virtual band presentation (See Fig. \ref{fig_5pt2025_deform}).} \\ & \\ \begin{tabular}{l} 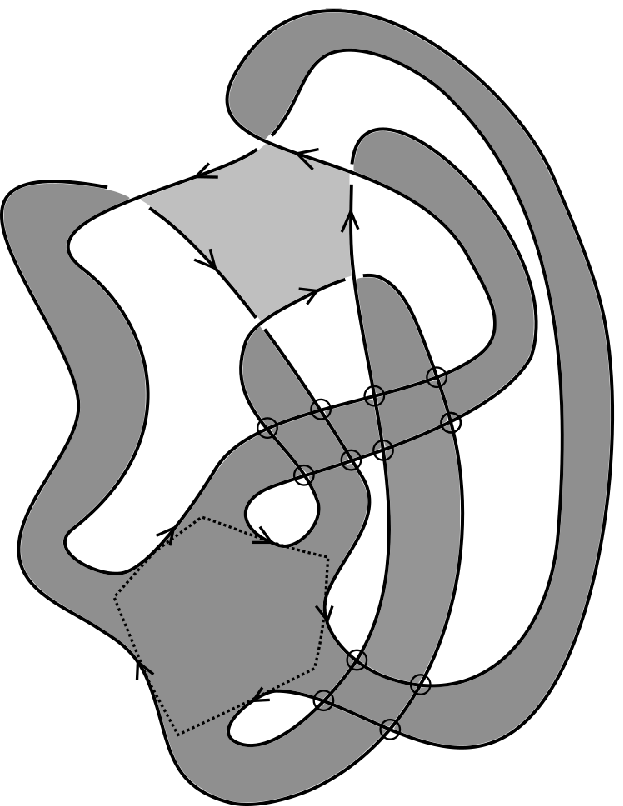 \end{tabular} & \begin{tabular}{r} \def\svgwidth{3in} \input{5pt2025_band.eps_tex}\end{tabular} \end{tabular}} \\\hline
\end{tabular}}
\caption{Summary of virtual Seifert surface algorithm for 5.2025.} \label{fig_5pt2025}
\end{figure}

\begin{figure}[p]
\makebox[\textwidth][c]{
\begin{tabular}{|c|cc|} \hline
\multicolumn{3}{|l|}{\textbf{Step 1A:} Perform Seifert smoothing. Make alternating. Perform all $A$ and all $B$ smoothings. } \\ \hline & & \\
\underline{6.87548, Seifert smoothing:} & \underline{All $A$ state:} & \underline{All $B$ state:} \\ & & \\
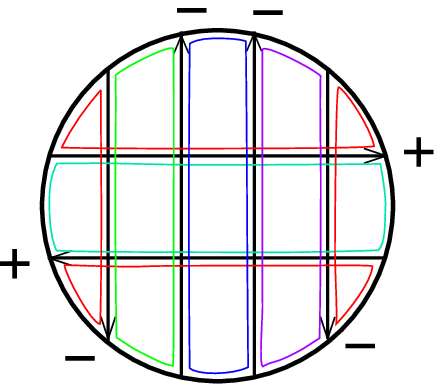 & 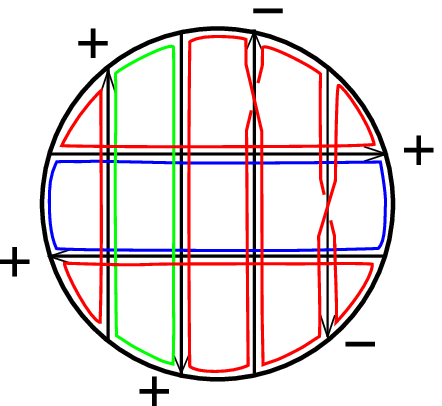 & 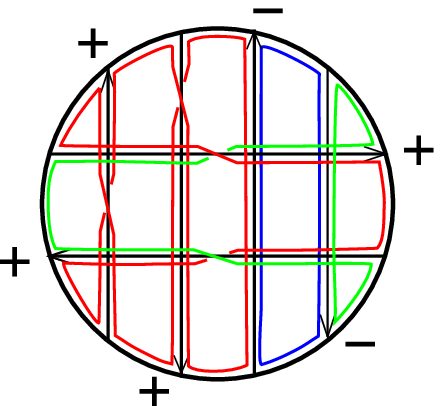\\ & & \\
\begin{tabular}{c}
\underline{\textbf{Step 1B:} Orient $2$-handles.} \\ \\ \\ \def\svgwidth{1.5in} 
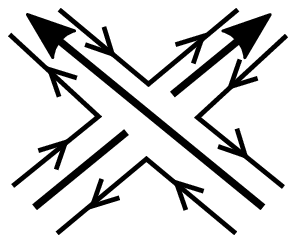 \\ (mark on diagrams, exercise) \end{tabular} & \multicolumn{2}{|l|}{\begin{tabular}{l} \underline{\textbf{Step 1C:} Compute $\partial_2:C_2 \to C_1$ and Seifert cycles.} \\ \\ 
\tiny
$\begin{array}{cll}
\partial_2^{\uptau} &=&  
\left[\begin{array}{cccccccccccc}  
-1 & 1 & 0 & 1 & 0 & 1 & 0 & 1 & -1 & 1 & 0 & 1 \\ 
0 & 0 & -1 & 0 & 0 & 0 & -1 & 0 & 0 & 0 & 0 & 0 \\ 
0 & 0 & 0 & 0 & -1 & 0 & 0 & 0 & 0 & 0 & -1 & 0 \\
0 &-1 & 1 & -1 & 0 & -1 & 1 & -1 & 0 & 0 & 1 & 0 \\
0 & 0 & 0 & 0 & 1 & 0 & 0 & 0 & 0 & -1 & 0 & -1 \\
1 & 0 & 0 & 0 & 0 & 0 & 0 & 0 & 1 & 0 & 0 & 0 \\ 
\end{array}\right] \begin{array}{c} d_1 \\ d_2 \\ d_3 \\ d_4 \\ d_5 \\ d_6 \end{array}\\ \\
s_1^{\uptau} &=& \left[ \begin{array}{cccccccccccc} 1 & 0 & 0 & 0 & 0 & 0 & 0 & 0 & 1 & 0 & 0 & 0 \end{array} \right] \\ \\
s_2^{\uptau} &=& \left[ \begin{array}{cccccccccccc} 0 & 1 & 0 & 0 & 0 & 0 & 0 & 1 & 0 & 0 & 0 & 0 \end{array} \right] \\  \\
s_3^{\uptau} &=& \left[ \begin{array}{cccccccccccc} 0 & 0 & 1 & 0 & 0 & 0 & 1 & 0 & 0 & 0 & 0 & 0 \end{array} \right] \\ \\
s_4^{\uptau} &=& \left[ \begin{array}{cccccccccccc} 0 & 0 & 0 & 0 & 1 & 0 & 0 & 0 & 0 & 0 & 1 & 0 \end{array} \right] \\  \\
s_5^{\uptau} &=& \left[ \begin{array}{cccccccccccc} 0 & 0 & 0 & 1 & 0 & 1 & 0 & 0 & 0 & 1 & 0 & 1 \end{array} \right] \\ 
\end{array}$
\normalsize
\end{tabular}} \\ \hline \multicolumn{3}{|l|}{\underline{\textbf{Step 2:} Find a spanning solution.}}\\ \multicolumn{3}{|c|}{} \\ \multicolumn{3}{|c|} {$s_1=\partial_2(d_6),\,\, s_3=\partial_2(-d_2), \,\, s_4=\partial_2(-d_3), \,\, s_2+s_5=\partial_2(-d_2-d_3-d_4-d_5)$.} \\ \multicolumn{3}{|c|}{} \\ \hline \multicolumn{3}{|l|}{\begin{tabular}{l|c} \underline{\textbf{Steps 3,4,5:} See full details in Figure \ref{fig_6pt87548_screen}.} & \underline{Virtual band presentation of 6.87548.} \\ &\\ \begin{tabular}{l} 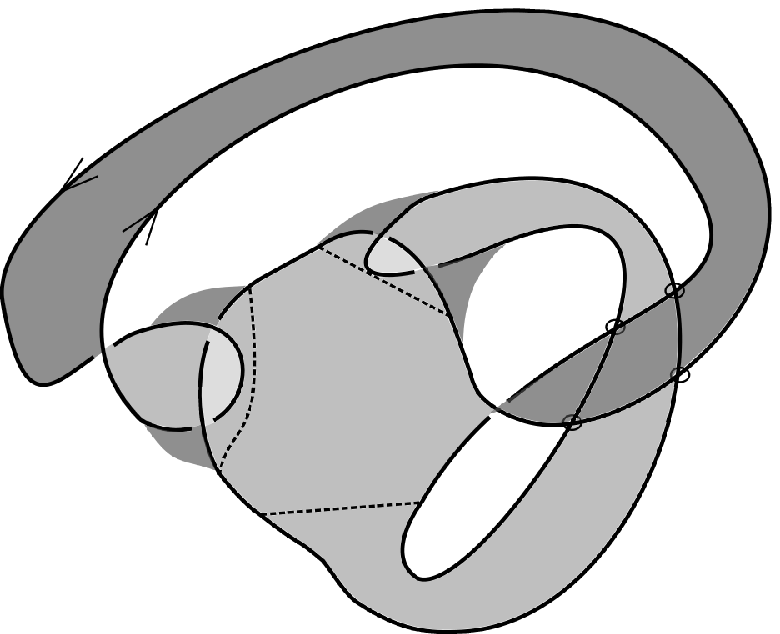 \end{tabular} & \begin{tabular}{l} \\ \def\svgwidth{3in} 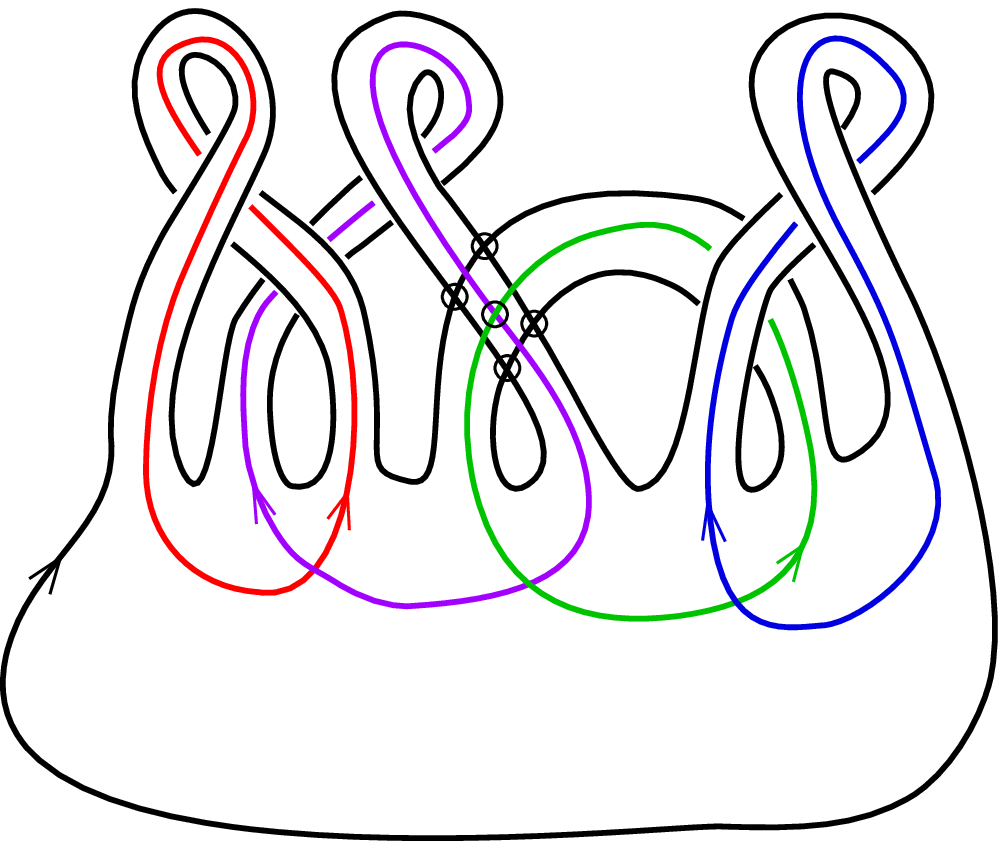\end{tabular} \end{tabular}} \\\hline
\end{tabular}}
\caption{Summary of virtual Seifert surface algorithm for 6.87548.}\label{fig_6pt87548}
\end{figure}

\subsection{Step 2: Spanning solutions} We begin with an algebraic reinterpretation of the constructive proof for the existence of Seifert surfaces for AC knots that was sketched in Section \ref{sec_AC}. Let $K$ be an AC diagram on its Carter surface $\Sigma$. Performing the Seifert smoothing gives the set of Seifert cycles $G=\{s_1,\ldots,s_p\}$. Since $K$ is AC, the sum of these pairwise disjoint simple curves on $\Sigma$ is homologically trivial in $H_1(\Sigma)$. The set of Seifert cycles $G$ can be partitioned into non-empty subsets $G_1, \cdots, G_l$ such that $\sum_{s_i \in G_j} s_i \in \text{im}(\partial_2)$. The $G_j$ may be chosen to be \emph{minimal} in the sense that no subset of $G_j$ also satisfies this property. For each $j$, there are integers $y_{j,1},\ldots, y_{j,m}$ such that:
\begin{equation} \label{eqn_span}
\partial_2(y_{j,1} d_1+ y_{j,2} d_2+ \ldots+y_{j,m} d_m)=\sum_{s_i \in G_j} s_i.
\end{equation}
Here we may assume that each $y_{j,k}$ is either $0$ or $\pm 1$. For $1 \le j \le l$, set $S_j$ to be the subsurface of $\Sigma$ consisting of the union of all the $2$-handles $d_k$ such that $y_{j,k} \ne 0$, together with any $0$- and $1$-handles incident to them. Since $S_j$ is oriented, we may assume that either $y_{j,k} \in \{0,1\}$ for all $k$ or $y_{j,k} \in \{-1,0\}$ for all $k$.  The common sign of all $y_{j,k} \ne 0$ determines the orientation of $S_j$, where $+$ means that $S_j$ carries the orientation of $\Sigma$ and $-$ means it carries the opposite orientation. The minimality of $G_j$ ensures that each $S_j$ is connected. A Seifert surface for the corresponding knot $\mathscr{K} \subset \Sigma \times I$ is made from the subsurfaces $S_1,\ldots,S_l$ as in Section \ref{sec_AC}. Thus, the most important part of this construction is the set of solutions to Equation \ref{eqn_span}. This motivates the following definition.

\begin{definition} Let $D$ be an AC Gauss diagram, let $G=\{s_1,\ldots,s_p\}$ be the set of Seifert cycles, and $d_1,\ldots,d_m$ the $2$-handles. A \emph{spanning solution} is a minimal partition $G_1,\ldots,G_l$ of $G$ described above together with a solution $Y_j$ to Equation \ref{eqn_span} for each $j=1,\ldots,l$. Each $Y_j$ is called a \emph{spanning subsolution}. A spanning solution is said to be \emph{missing} a $2$-handle $d_k$ if $y_{j,k}=0$ for $1 \le j \le l$.
\end{definition}

\begin{theorem} \label{lemma_miss} For all $2$-handles $d_k$ of the Carter surface $\Sigma$, there is a spanning solution missing $d_k$. For every spanning solution missing $d_k$, there is a virtual Seifert surface $(F,\chi:\Xi \to X)$, with $\Xi=\overline{\Sigma \smallsetminus d_k}$.  
\end{theorem}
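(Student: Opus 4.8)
The plan is to leverage the fact that the Carter surface $\Sigma$ is closed, connected and oriented, so that $\ker(\partial_2)\subseteq C_2$ is infinite cyclic, generated by the fundamental cycle $[\Sigma]=d_1+\cdots+d_m$ once the $2$-handles are oriented compatibly with $\Sigma$ (as in the example computations of Section \ref{sec_homology}). Consequently, for a fixed minimal subset $G_j$ of Seifert cycles, the full set of solutions $Y_j$ to Equation \ref{eqn_span} is a single coset of $\mathbb{Z}[\Sigma]$ in $C_2$. Another point used repeatedly is that the Seifert cycles $s_1,\ldots,s_p$, being obtained from $K$ by oriented smoothings, lie in the $1$-skeleton of $\Sigma$; hence each $2$-handle $d_l$ lies either entirely inside or entirely outside any given union $\bigcup_{s_i\in G_j}s_i$ of Seifert cycles, and likewise $K$ itself is disjoint from the interior of $d_k$.

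For the first assertion, fix $d_k$ and begin with any minimal partition $G_1,\ldots,G_l$ of $G$ together with connected subsurfaces $S_1,\ldots,S_l$ of $\Sigma$ as supplied by Proposition 6.2 of \cite{acpaper}, where each $S_j$ is a union of $2$-handles (plus incident lower handles) whose oriented boundary is $\pm\sum_{s_i\in G_j}s_i$. Write $S_j=\sum_{l\in A_j}d_l$. The key move is to pass to the complementary subsurface: set $S_j'=\overline{\Sigma\smallsetminus S_j}=\sum_{l\notin A_j}d_l$, so that $\partial S_j'=-\partial S_j$, and note that $S_j'$ is again a union of $2$-handles. Moreover $S_j'$ is connected, for if it had two components their boundaries would exhibit a proper refinement of $G_j$, contradicting minimality (this is precisely the argument that makes each $S_j$ connected). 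Now, for each $j$ with $d_k\in S_j$, replace $S_j$ by whichever of $S_j'$, $-S_j'$ has oriented boundary exactly $+\sum_{s_i\in G_j}s_i$. The resulting $2$-chain $Y_j$ has all coefficients in $\{0,1\}$ or all in $\{0,-1\}$, still solves Equation \ref{eqn_span}, and has $y_{j,k}=0$ because $d_k\in S_j$ forces $d_k\notin S_j'$. Carrying this out for every such $j$ (leaving the other subsolutions untouched) yields a spanning solution missing $d_k$.

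For the second assertion, let $\{(G_j,Y_j)\}_{j=1}^{l}$ be a spanning solution missing $d_k$ and put $\Xi=\overline{\Sigma\smallsetminus d_k}$, which is diffeomorphic to $\overline{\Sigma\smallsetminus V(z_0)}$ for a small disc $V(z_0)\subset\operatorname{int}(d_k)$. Each $Y_j$ is supported away from $d_k$, so its associated connected subsurface $S_j$ sits inside $\Xi$, and $\bigcup_{j}\partial S_j=\bigcup_{i=1}^{p}s_i$ is the entire family of Seifert cycles of $K$. Since $K$ lies in the $1$-skeleton, $K\subset\Xi$ with unchanged Gauss diagram $D$. Now run the construction recalled in Section \ref{sec_AC} for the implication $(2)\Rightarrow(3)$: stack the $S_j$ at distinct heights in $\Xi\times I$ and attach half-twisted bands at the smoothed crossings of $K$, obtaining a Seifert surface $F\subset\Xi\times I$ of the knot $\mathscr{K}$ determined by $K$. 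Finally, as $\Xi$ has exactly one boundary component, it is orientation-preserving diffeomorphic to the model surface $\Xi'$ of Figure \ref{fig_seif_exist}; composing with this diffeomorphism produces a projector $\chi:\Xi\to X$ onto the screen $X$ of that figure, and $(F;\chi:\Xi\to X)$ is a virtual Seifert surface of $\upsilon:=\chi(K)$, exactly as in the proof of Theorem \ref{thm_exist}.

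I expect the genuine content to lie entirely in the first assertion, and within it in verifying that the complement-of-a-subsurface operation stays inside the class of admissible spanning subsolutions: one needs the orientation convention on the $2$-handles pinned down so that $\ker(\partial_2)=\mathbb{Z}[\Sigma]$ holds on the nose, and one needs minimality of the partition to guarantee that $S_j'$ remains connected so that $Y_j$ is still a legitimate spanning subsolution. The second assertion, by contrast, is essentially a bookkeeping repackaging of Theorem \ref{thm_exist} together with the constructive argument for $(2)\Rightarrow(3)$ from \cite{acpaper}, with the only thing to check being that the chosen spanning solution indeed forces all the subsurfaces into $\Xi=\overline{\Sigma\smallsetminus d_k}$.
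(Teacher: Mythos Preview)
Your proposal is correct and takes essentially the same approach as the paper: your ``pass to the complementary subsurface $S_j'=\overline{\Sigma\smallsetminus S_j}$'' is exactly the paper's algebraic move of replacing $Y_j$ by $Y_j-y_{j,k}(d_1+\cdots+d_m)$, and for the second assertion both you and the paper defer to Theorem~\ref{thm_exist} together with the constructive argument from \cite{acpaper}. Your extra care in checking that $S_j'$ stays connected (via minimality of $G_j$) is a nice observation, though the paper treats this as automatic since connectedness is not part of the \emph{definition} of a spanning subsolution but only a consequence of the partition remaining minimal.
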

\begin{proof} Let $(\{G_1,\ldots,G_l\},\{Y_1,\ldots,Y_l\})$ be any spanning solution and let $d_k$ be a given $2$-handle. Suppose there is a subsolution $Y_j$ where $d_k$ occurs with coefficient $y_{j,k} \ne 0$. Then every $2$-handle $d_i$ with $y_{j,i} \ne 0$ satisfies $y_{j,i}=y_{j,k}$. Since $\partial_2(d_1+\ldots+d_m)=0$, we have that:
\[
\partial_2(y_{j,1} d_1+\ldots+y_{j,m} d_m-y_{jk}(d_1+\ldots+d_m))=\sum_{s_i \in G_j} s_i. 
\]
Thus, we have a new spanning subsolution missing $d_k$. By appropriately adding or subtracting a copy $d_1+\ldots+d_m$ of the Carter surface to each subsolution $Y_j$ with $y_{j,k} \ne 0$, we obtain a spanning solution missing $d_k$. The second claim follows from Theorem \ref{thm_exist} and the preceding remarks.
\end{proof}

\begin{remark} Spanning solutions for a given Gauss diagram are not unique.
\end{remark}

\textbf{Example (4.99):} The map $\partial_2: C_2 \to C_1$ was computed in Section \ref{sec_homology}. From Figure \ref{fig_4pt99_summ}, it follows that the Seifert smoothing gives three cycles $s_1,s_2,s_3$, where $s_1=\partial_2 (-d_2)$, $s_2=\partial_2 d_3$ and $s_3=\partial_2(-d_4)$. Setting $G=\{s_1,s_2,s_3\}$, this gives a minimal partition of $G_1=\{s_1\}$, $G_2=\{s_2\}$, $G_3=\{s_3\}$. This is a spanning solution missing $d_1$ where the subsurfaces $S_1,S_2,S_3$ are all discs.
\newline
\newline
\textbf{Example (5.2025):} The map $\partial_2:C_2 \to C_1$ is given in Figure \ref{fig_5pt2025}. There are two Seifert cycles, $s_1,s_2$. Observe that that $s_1=\partial_2(-d_2)$ and $s_2=\partial_2(d_3)$. This gives a minimal partition of $G=\{s_1,s_2\}$ into $G_1=\{s_1\}$ and $G_2=\{s_2\}$. This is a spanning solution missing $d_1$. There are two subsurfaces $S_1$, $S_2$, both of which are discs.
\newline
\newline
\textbf{Example (6.87548):} From the computation of $\partial_2:C_2 \to C_1$ in Figure \ref{fig_6pt87548}, we find a spanning solution for $D$. One possibility is: $s_1=\partial_2(d_6)$, $s_3=\partial_2(-d_2)$, $s_4=\partial_2(-d_3)$, and $s_2+s_5=\partial_2(-d_2-d_3-d_4-d_5)$. It can be checked that this gives a minimal partition of $G=\{s_1,s_2,s_3,s_4,s_5\}$ as $G_1=\{s_1\}$, $G_2=\{s_3\}$, $G_3=\{s_4\}$, $G_4=\{s_2,s_5\}$. Thus there are a total of four subsurfaces $S_1,S_2,S_3,S_4$, where each is given as a sum of oriented discs. The spanning solution in this case misses $d_1$.

\subsection{Steps 3,4,5: Construction of virtual Seifert surfaces} Given a spanning solution missing some $2$-handle, a virtual Seifert surface is constructed in three steps: (1) forming the virtual screen $X$ and projector $\chi: \Xi \to X$, (2) placing the subsurfaces $S_1,\ldots,S_l$ on $X$, and (3) attaching the subsurfaces by half-twisted bands at the crossings of $K$. 
\newline
\newline
First we construct $\Xi$ from a spanning solution that misses some $2$-handle, say $d_m$. Let $d_1,\ldots,d_m$ be the $2$-handles of the Carter surface $\Sigma$ of $D$ and $(\{G_1,\ldots,G_l\},\{Y_1,\ldots,Y_l\})$ a spanning solution of $D$ missing $d_m$.  Let $Z=\bigcup_{j=1}^l \{d_k: y_{j,k} \ne 0\}$ and $z=|Z|$. Since the spanning solution misses $d_m$, $z<m$. Let $\Xi$ be the subsurface of $\Sigma$ consisting of all the $0$- and $1$-handles of $\Sigma$, together with all the $2$-handles in $Z$. Note that since $z<m$, $\Xi$ has at least one boundary component.  
\newline
\newline
The virtual screen $X$ is built from embeddings of the $d_i\in Z$ into $\mathbb{R}^2$. In particular, each $2$-handle $d_i \in Z$ may be embedded as a counterclockwise oriented polygon $P_i$ in $\mathbb{R}^2$. The sides of $P_i$ are in one-to-one correspondence with the $1$-handles in $\partial_2 d_i$. Moving in the counterclockwise direction on $\partial P_i$, label the sides of $\partial P_i$ with the $1$-handles in $\partial_2 d_i$ so that their ordering is preserved while traversing $\partial_2 d_i$ in its direction of orientation. A side labeled $c_j$ of $\partial P_i$ is directed counterclockwise if it occurs with coefficient $+1$ in $\partial_2 d_i$ and clockwise if it occurs with coefficient $-1$. Similarly, the vertices of each $P_i$ may be labeled with the corresponding $0$-handles of $\Sigma$ (i.e. crossings of $K$). Typically, we suppress that labels on the $0$-handles in figures as they can also be distinguished by the $1$-handle labels. Now, if $Z=\{d_{i_1},\ldots,d_{i_z}\}$, embed the polygons $P_{i_1},\ldots, P_{i_z}$ disjointly into $\mathbb{R}^2$.
\newline
\newline
To finish the virtual screen, add in all the $1$-handles and $0$-handles of $\Sigma$. If $P_i$ and $P_j$ have sides with the same label $c_k$, then the $1$-handle $c_k$ is a band $I \times I$ going from $P_i$ to $P_j$. Similarly, if $P_i$ and $P_j$ have corners that share a $0$-handle label, then a band $I\times I$ is attached between a vertex of $P_i$ and a vertex of $P_j$. All these bands may be chosen so that they intersect only in virtual crossings of bands. Note that when a $1$-handle is identified, it is necessary to extend the identification to the $0$-handles incident to it. The final result is a collection of disjoint counterclockwise oriented discs embedded in $\mathbb{R}^2$, attached together by bands that intersect only in virtual crossings of bands. This is a virtual screen $X$. A projector $\chi:\Xi \to X$ is defined by $\chi(d_i)=P_i$ for all $d_i \in Z$ and likewise for the $0$- and $1$-handles.
\newline
\newline
The subsurfaces of a Seifert surface can now be identified in the screen $X$. Indeed, if $S_j$ is the subsurface of $\Xi$ consisting of the union of $2$-handles $d_k$ such that $y_{j,k} \ne 0$, then $\chi(S_j)$ is the union of polygons $P_k$ such that $y_{j,k} \ne 0$, together with any incident lower dimensional handles. The immersed surface $\chi(S_j)$ is oriented counterclockwise in $\mathbb{R}^2$ if the common sign of the nonzero $y_{j,k}$ is $1$ and clockwise if the common sign is $-1$. A Seifert surface $F$ in $\Xi \times I$ is then completed by placing the $S_j$ at different heights and attaching half-twisted bands at the crossings that respect the local writhe and over/under information for the Gauss diagram $D$. Transferring these half-twisted bands to the screen $X$ completes the picture. The virtual knot diagram will now be visible in $X$.
\newline

\begin{figure}[hbt]
\[
\begin{array}{|c|} \hline \\
\xymatrix{\begin{array}{c} \def\svgwidth{1.75in} 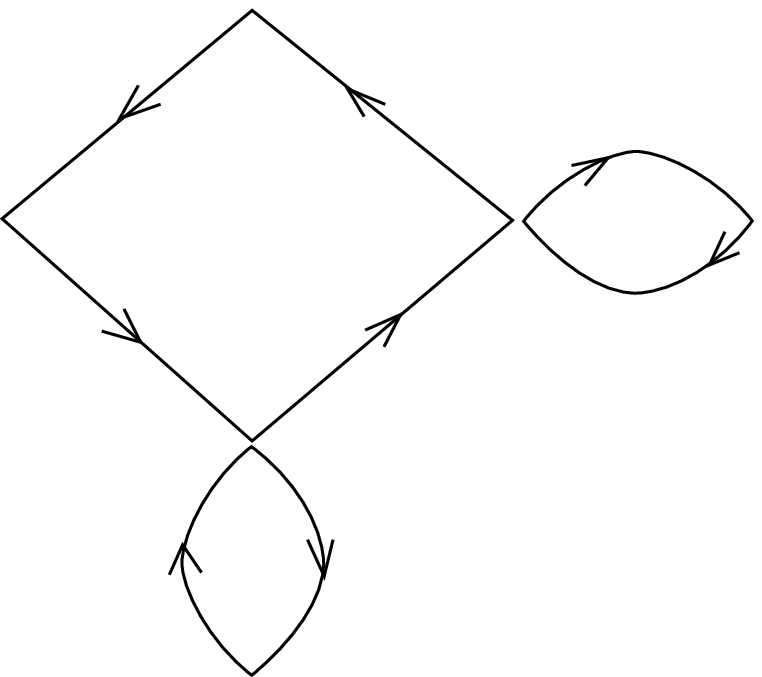 \end{array} \ar[r] & \begin{array}{c} \def\svgwidth{2.25in} 
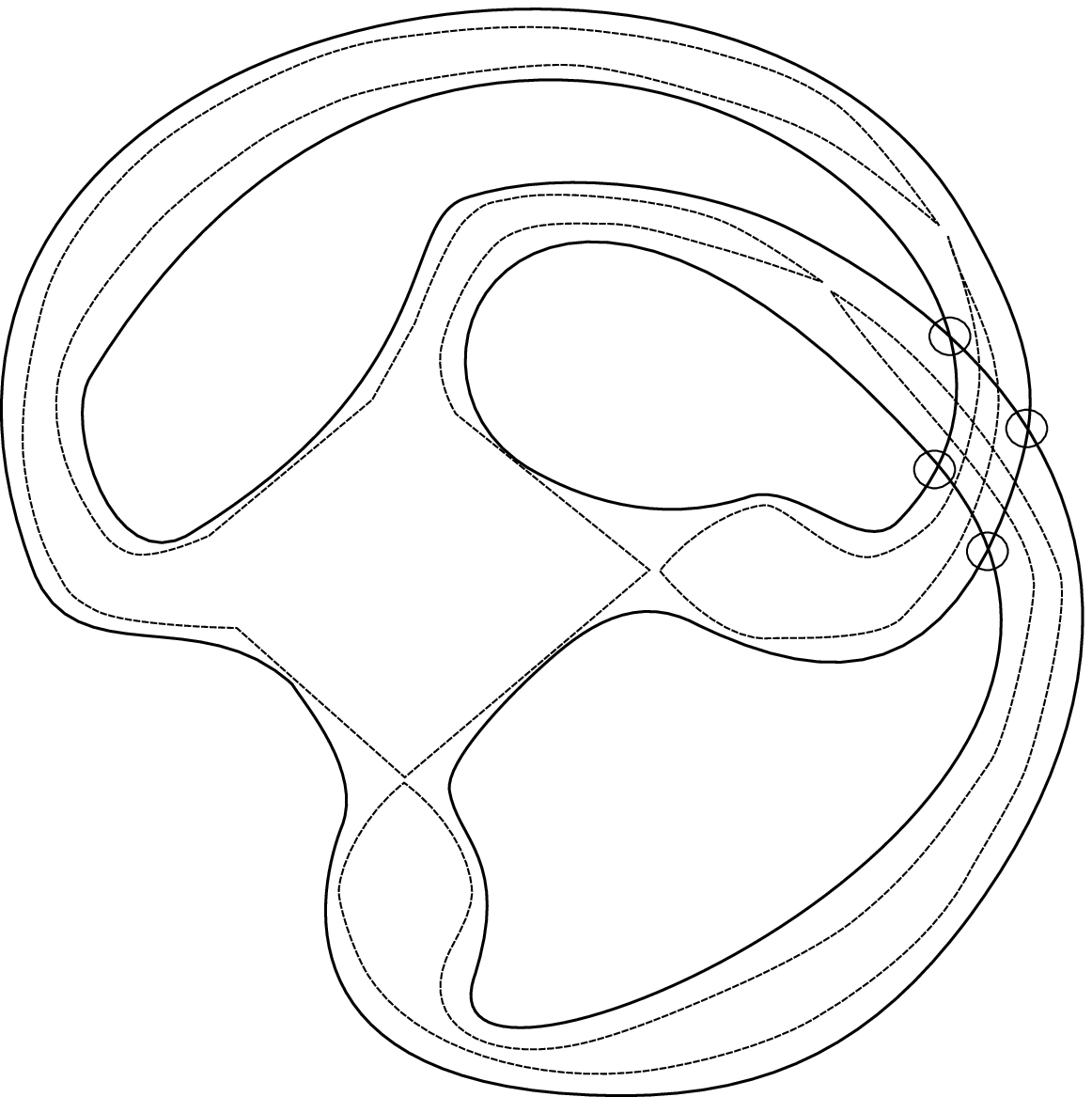 \end{array}} \\
\begin{array}{cc}
\begin{array}{l} \underline{\text{Spanning solution:}} \\ \\ s_1=\partial_2(-d_2) \\ \\ s_2=\partial_2(d_3) \\ \\ s_3=\partial_2(-d_4) \\ \\ (\text{misses } d_1) \end{array} & \begin{array}{|c|} \hline \\ \def\svgwidth{2.4in} 
\input{4pt99_seif.eps_tex} \\ \hline \end{array} \end{array} \\ \\ \hline \end{array}  
\]
\caption{Construction of a virtual screen and a virtual Seifert surface from a spanning solution of 4.99. See also Figure \ref{fig_4pt99_summ}. }\label{fig_4pt99_screen}
\end{figure}

\begin{figure}[hbt]
\[
\begin{array}{|c|} \hline \\
\xymatrix{\begin{array}{c} \def\svgwidth{1in} 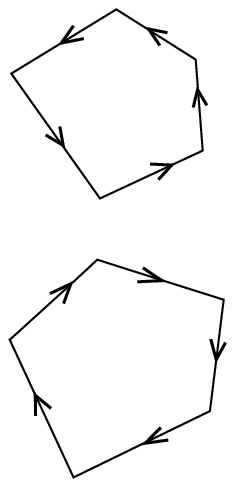 \end{array} \ar[r] & \begin{array}{c} \def\svgwidth{1.75in} 
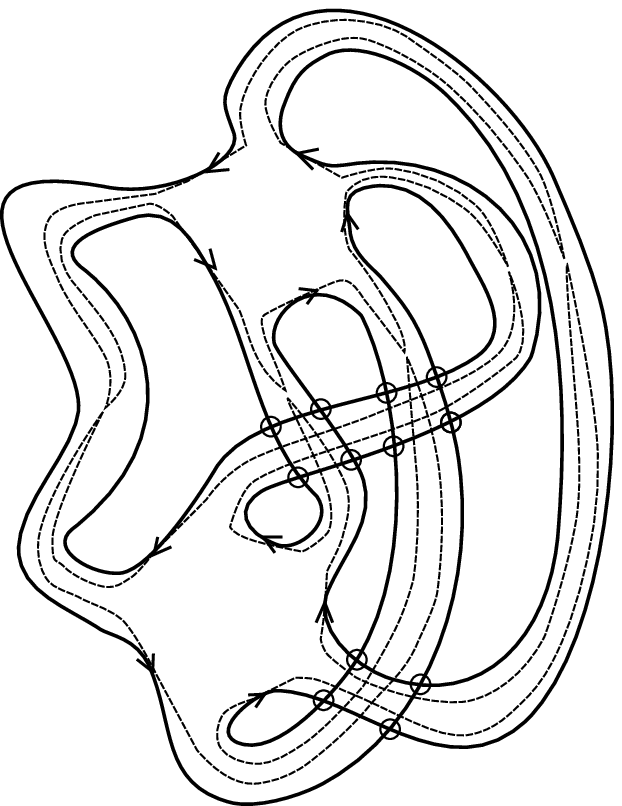 \end{array}} \\
\begin{array}{cc}
\begin{array}{l} \underline{\text{Spanning solution:}} \\ \\ s_1=\partial_2(-d_2) \\ \\ s_2=\partial_2(d_3) \\ \\ (\text{misses } d_1) \end{array} & \begin{array}{|c|} \hline \\ \def\svgwidth{2in} 
\input{5pt2025_surf.eps_tex} \\ \hline \end{array} \end{array} \\ \\ \hline \end{array}  
\]
\caption{Construction of a virtual screen and a virtual Seifert surface from a spanning solution of 5.2025. See also Figure \ref{fig_5pt2025}.}\label{fig_5pt2025_screen}
\end{figure}

\begin{figure}[hbt]
\[
\begin{array}{|c|} \hline \\
\xymatrix{\begin{array}{c} \def\svgwidth{2.5in}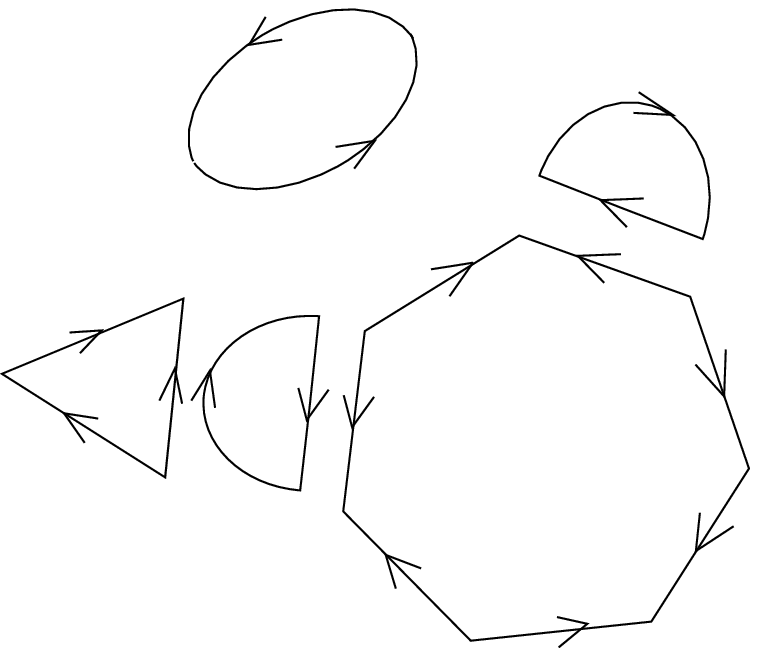 \end{array} \ar[r] & \begin{array}{c} \def\svgwidth{2.5in}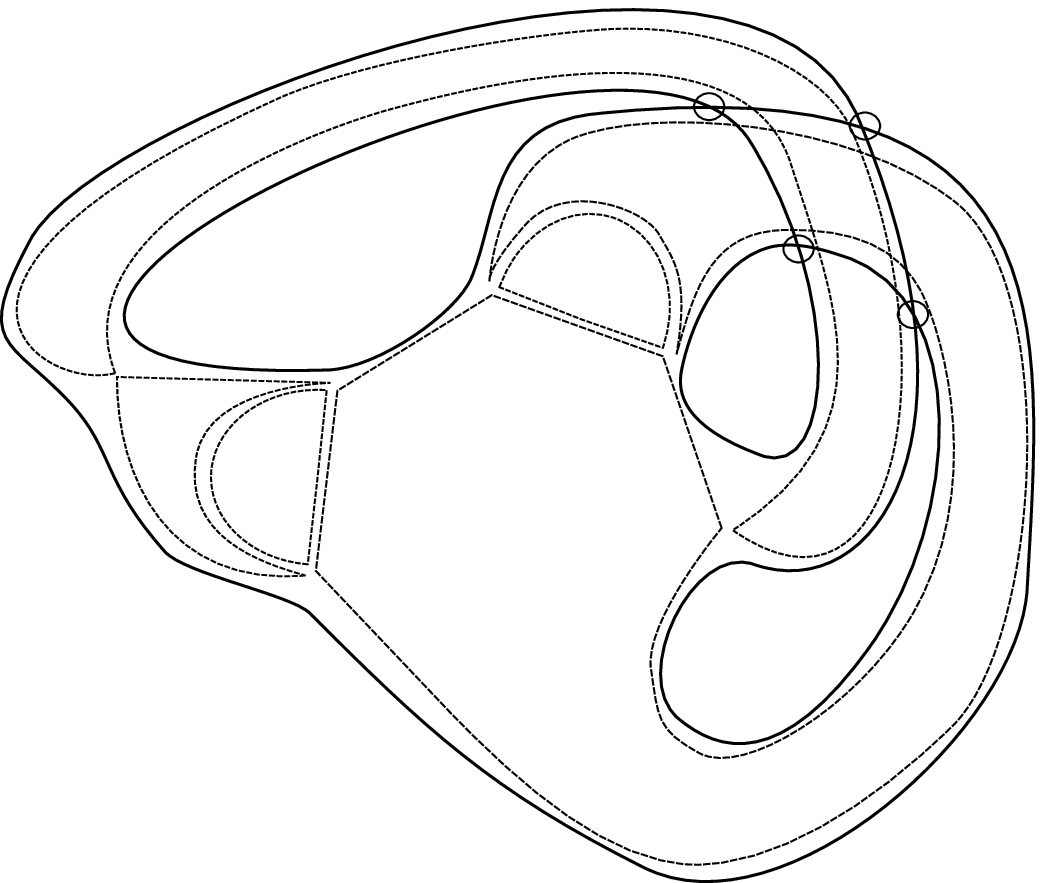 \end{array}} \\
\begin{array}{cc}
\begin{array}{l} \underline{\text{Spanning solution:}} \\ \\ s_1=\partial_2(d_6) \\ \\ s_3=\partial_2(-d_2) \\ \\ s_2+s_5=\partial_2(-d_2-d_3-d_4-d_5) \\ \\ (\text{misses } d_1) \end{array} & \begin{array}{|c|} \hline \\ 
\input{ac_seif_example_2_II.eps_tex} \\ \hline \end{array} \end{array} \\ \\ \hline \end{array}  
\]
\caption{Construction of a virtual screen and a virtual Seifert surface from a spanning solution of 6.87548. See also Figure \ref{fig_6pt87548}.}\label{fig_6pt87548_screen}
\end{figure}

\textbf{Example (4.99):} In Figure \ref{fig_4pt99_screen}, a screen is constructed from the spanning solution previously found. The $2$-handles used in the spanning solution are $d_2$, $d_3$, and $d_4$. Furthermore, $\partial_2(d_2)=c_1+c_5$, $\partial_2(d_3)=c_2+c_8+c_6+c_4$, and $\partial_2(d_4)=c_3+c_7$. Figure \ref{fig_4pt99_screen}, top left, shows $d_2$ and $d_4$ embedded as bigons and $d_3$ embedded as a quadrilateral. There are no polygons sharing $1$-handle labels. Attaching bands to the corners with the same $0$-handle labels gives the virtual screen on the top right in Figure \ref{fig_4pt99_screen}. The dotted arcs show the $2$-handle boundaries in $X$. To draw $F$ in $X$, attach half-twisted bands at the crossings to the subsurfaces $S_1=-d_2$, $S_2=d_3$, and $S_3=-d_4$.
\newline
\newline
\textbf{Example (5.2025):} Refer to the spanning solution from Figure \ref{fig_5pt2025}. The $2$-handles occurring with non-zero coefficient are $d_2$ and $d_3$. Note that $\partial_2 d_3=c_1+c_9+c_7+c_5+c_3$ and $\partial_2 d_2=c_2+c_8+c_4+c_{10}+c_6$. Thus we embed $d_2,d_3$  in $\mathbb{R}^2$ as disjoint pentagons $P_2$, $P_3$, respectively (see Figure \ref{fig_5pt2025_screen}). None of the sides of the polygons have the same $1$-handle label, so it is only necessary to connect the vertices of polygons having the same $0$-handle labels. To draw $F$ in $X$, attach half-twisted bands at the crossings to the oriented subsurfaces $S_1=-d_2, S_2=d_3$.
\newline
\newline
\textbf{Example (6.87548):} Now refer to the spanning solution from Figure \ref{fig_6pt87548}. The $2$-handle $d_1$ is not used, $d_4$, $d_5$, $d_6$ are used once each, and $d_2$, $d_3$ are used twice each. From the Gauss diagram, it is observed that $d_2$, $d_3$, $d_6$ are bigons, $d_5$ is a triangle, and $d_4$ is a heptagon. Figure \ref{fig_6pt87548_screen} shows these embedded disjointly in $\mathbb{R}^2$ and labeled as described above. Attach $1$-handles and $0$-handles as bands, inserting virtual crossings of bands as necessary. There are four subsurfaces $S_1=d_6$, $S_2=-d_2$, $S_3=-d_3$, $S_4=-d_2-d_3-d_4-d_5$. The subsurface $S_4$ is obtained by identifying the corresponding sides of $d_5$, $d_3$, $d_4$, and $d_2$. The subsurfaces $S_2,S_3$ are drawn on top of $d_2,d_3$ respectively. Attach by half-twisted bands at the crossings to obtain the image of $F$ in $X$.

\section{Deforming virtual Seifert surfaces in $\mathbb{R}^2$} \label{sec_step_6}   In this section, we will show how to manipulate virtual Seifert surfaces $(F;\chi:\Xi \to X)$ in the plane. Two techniques are given. The first technique is to project an ambient isotopy of $F$ in $\Xi \times I$ to the virtual screen. It is similar to the method of deforming classical Seifert surfaces (e.g. using \emph{topological script} \cite{on_knots}). The second technique is to alter the virtual screen $X$ and the projector $\chi: \Xi \to X$.  We discuss the details of each of these methods before proceeding to an example. Beginning with the virtual Seifert surface of 5.2025 previously found,  we will use the two methods to find a virtual band presentation of 5.2025. Virtual band presentations of 4.99 and 6.87548 are given in Figures \ref{fig_4pt99_summ} and \ref{fig_6pt87548}, respectively. We will leave the details as exercises for the reader.  

\subsection{Method 1: Projecting an Isotopy} An ambient isotopy $H:(\Xi\times I) \times I \to (\Xi \times I)$ of a Seifert surface $F$ can be drawn on the virtual screen via the map $\chi \circ \pi \circ H$, where $\pi: \Xi \times I \to \Xi$ is projection onto the first factor. A useful special case of this is when $H$ is the identity on the inverse image of the virtual regions. More exactly, suppose that there is an open neighborhood $W$ of $\bigcup_{R} \chi^{-1}(R)$ such that the restriction of $H$ to $(W \times I) \times I$ is the identity, where the union is taken over all virtual regions $R$ of $X$. Observe that the restriction of $\chi$ to $\Xi\smallsetminus W$ is an embedding into $\mathbb{R}^2$. Thus the map $(\chi \times \text{id})$ restricted to $(\Xi \smallsetminus W) \times I$ can be used to trace $H$ in $\mathbb{R}^2 \times I$. It follows that for such $H$, the Seifert surface $F$ may be manipulated in a fashion identical to that of classical Seifert surfaces. Some commonly used local moves are depicted in Figure \ref{fig_proj_oto}. All these moves occur in a small ball in the screen $X$ over which $\chi$ is one-to-one.

\begin{figure}[htb]
\begin{tabular}{cccccc}
\begin{tabular}{c}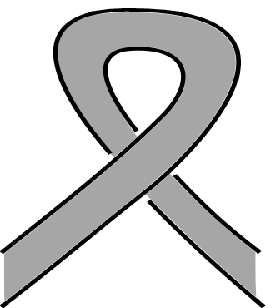 \end{tabular} & $\approx$ & \begin{tabular}{c}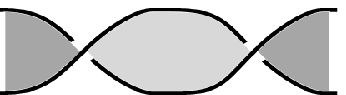\end{tabular}  & 
\begin{tabular}{c}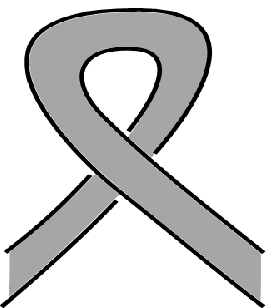 \end{tabular} & $\approx$ & \begin{tabular}{c} 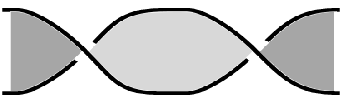 \end{tabular} \\ \\
\begin{tabular}{c}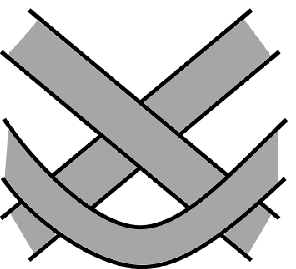 \end{tabular} & $\approx$ & \begin{tabular}{c}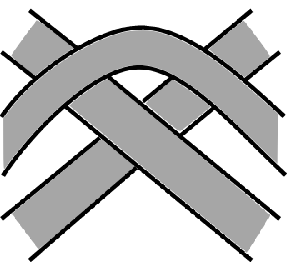\end{tabular}  & 
\begin{tabular}{c}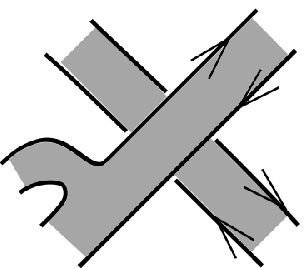 \end{tabular} & $\approx$ & \begin{tabular}{c} 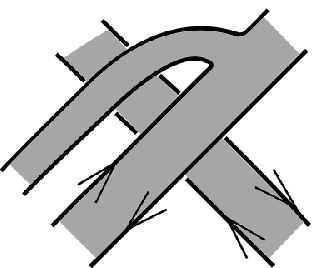 \end{tabular} \\ \\
\begin{tabular}{c}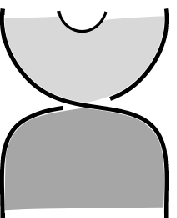 \end{tabular} & $\approx$ & \begin{tabular}{c}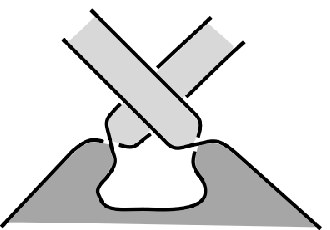\end{tabular}  & 
\begin{tabular}{c}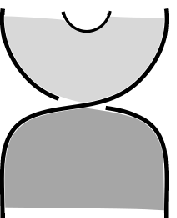 \end{tabular} & $\approx$ & \begin{tabular}{c} 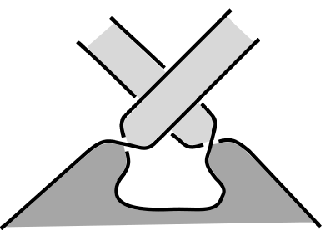 \end{tabular} \\    
\end{tabular}
\caption{Some moves on virtual Seifert surfaces in a neighborhood over which the projector is one-to-one.} \label{fig_proj_oto}
\end{figure}

\subsection{Method 2: Altering the screen and projector} \label{sec_method2} The second way to manipulate a virtual Seifert surface $(F;\chi:\Xi \to X)$ is to keep $F$ and $\Xi$ fixed while altering the virtual screen $X$ and the projector $\chi$.  One example of this would be to change the handle decomposition of $\Xi$. In general, if $(F;\chi:\Xi \to X)$ is given and $\chi':\Xi\to X'$ is another projector and screen, then $(F;\chi':\Xi \to X)$ is also a virtual Seifert surface. If $K$ is a knot diagram on $\Xi$ representing $\partial F$, then $\chi(K)$ and $\chi'(K)$ are equivalent virtual knots. This follows from the fact that they have identical Gauss diagrams. Thus $(F;\chi:\Xi \to X)$ and $(F;\chi':\Xi\to X')$ are virtual Seifert surfaces of equivalent virtual knots. 
\newline
\newline
Figure \ref{fig_proj_alt} gives some local moves on virtual Seifert surfaces obtained by such alterations of the projector and screen. Some care must be taken in the interpretation of these figures. Each move is of the form $LHS \leftrightarrow RHS$. On each side of $\leftrightarrow$, some bands of a virtual screen are drawn. The virtual screen on $LHS$ is \emph{different} than the virtual screen on $RHS$. Moreover, $LHS$ and $RHS$ correspond to \emph{different} projectors of the \emph{same} surface $\Xi$. The moves do not represent an isotopy, but only a change in how $\Xi$ is immersed in $\mathbb{R}^2$. Changing the projector also changes how the image of $F \subset \Xi \times I$ appears in $\mathbb{R}^2$. Figure \ref{fig_proj_alt} indicates the effect of changing the projector on a subsurface of $F$ in several cases. For example, in move (A), the left hand side shows an immersed band of the the virtual screen as a curl with one virtual region. This band can be alternatively embedded in $\mathbb{R}^2$ without the curl, as indicated on the right hand side of the move. In addition, if the depicted gray surface indicates the image of a piece of $F$ in $\mathbb{R}^2$, then the effect on the image of $F$ due to the change of $\chi$ is also to uncurl the virtual curl. 
\newline
\newline
Moves (B) and (C) of Figure \ref{fig_proj_alt} may be similarly interpreted. Changing the projector and the way the bands of the virtual screen intersect has a corresponding effect on the image of $F$ in $\mathbb{R}^2$. For move (D), the virtual screen is changed by altering where the two bands intersect in a virtual crossing of bands.  The effect on the image of $F$ is that the half-twisted band at the crossing gets moved to the other side of the virtual crossings of bands. It is interesting to observe that the effect of the move on the virtual knot is two extended Reidemeister moves (see Figure \ref{fig_reid_moves}). 
\newline
\newline
In order to reduce complexity of the figures, we generally do not draw the virtual screen while the virtual Seifert surface is being deformed. However, it is important to emphasize that any manipulation of the virtual Seifert surface must come from either an ambient isotopy of $F$ in $\Xi \times I$ or by an alteration of the the projector and virtual screen. Thus it is essential to keep track of where the virtual screen is at all times, even if it is not explicitly drawn on the paper. This is not difficult in practice, as is seen in the following example.
\newline

\begin{figure}[htb]
\begin{tabular}{cccccc}
\begin{tabular}{c}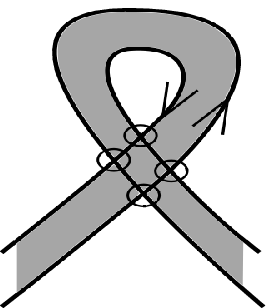 \end{tabular} & $\leftrightarrow$ & \begin{tabular}{c}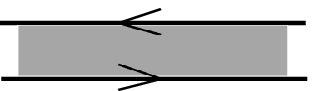\end{tabular}  & 
\begin{tabular}{c}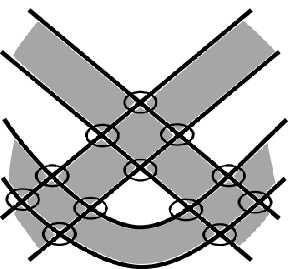 \end{tabular} & $\leftrightarrow$ & \begin{tabular}{c} 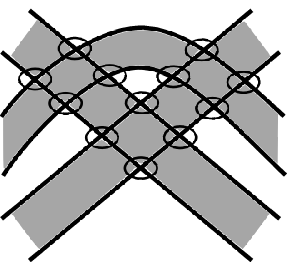 \end{tabular} \\ & (A) & & & (B) & \\
\begin{tabular}{c}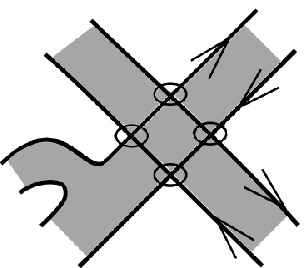 \end{tabular} & $\leftrightarrow$ & \begin{tabular}{c}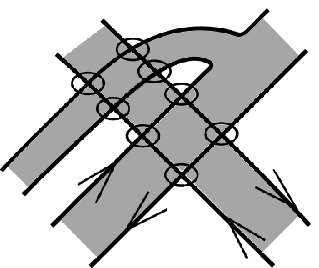\end{tabular} & 
\begin{tabular}{c}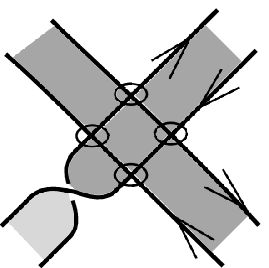 \end{tabular} & $\leftrightarrow$ & \begin{tabular}{c}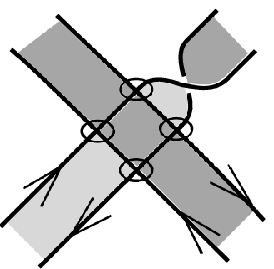\end{tabular} \\   
& (C) & & & (D) & \\
\end{tabular}
\caption{Moves showing the effect of altering a projector and screen $\chi:\Xi \to X$ on $\chi(F)$. See accompanying text in Section \ref{sec_method2} on how to interpret them.} \label{fig_proj_alt}
\end{figure}

\begin{figure}[p]
\makebox[\textwidth][c]{
\begin{tabular}{|cc|} \hline &\\ 
\begin{tabular}{c} \def\svgwidth{3in} 
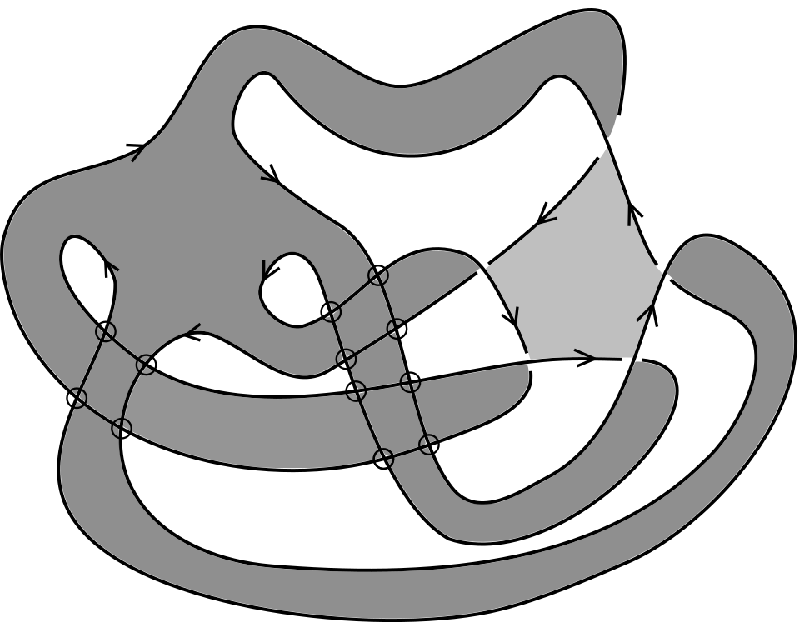\end{tabular} & \begin{tabular}{c} \def\svgwidth{3in} 
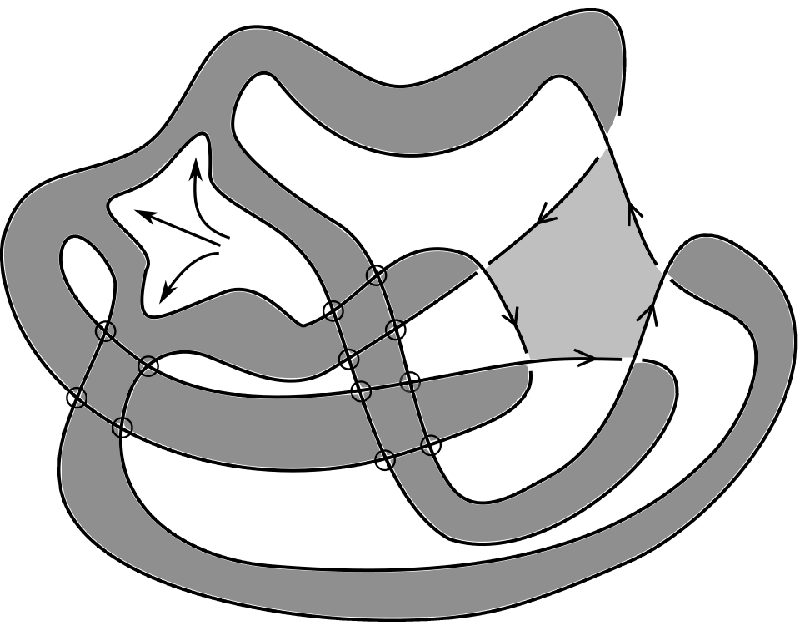\end{tabular}  \\  (1) & (2) \\ & \\
\begin{tabular}{c} \def\svgwidth{3in} 
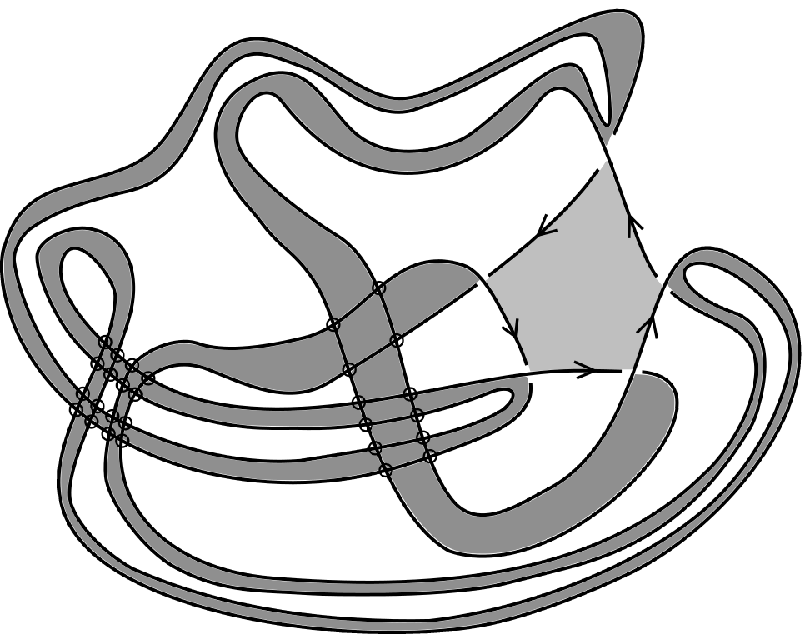\end{tabular} & \begin{tabular}{c} \def\svgwidth{3in} 
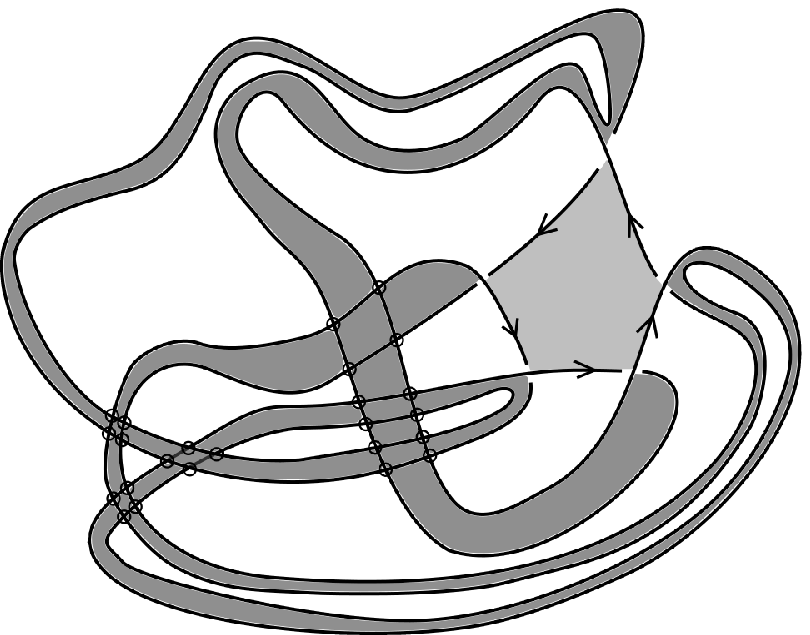\end{tabular}  \\  (3) & (4) \\ & \\
\begin{tabular}{c} \def\svgwidth{3in} 
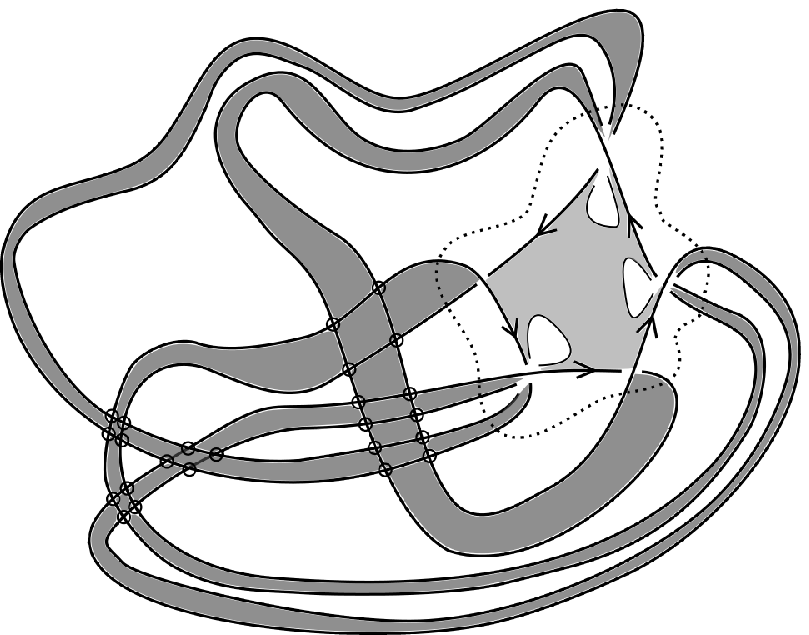\end{tabular} & \begin{tabular}{c} \def\svgwidth{2in} 
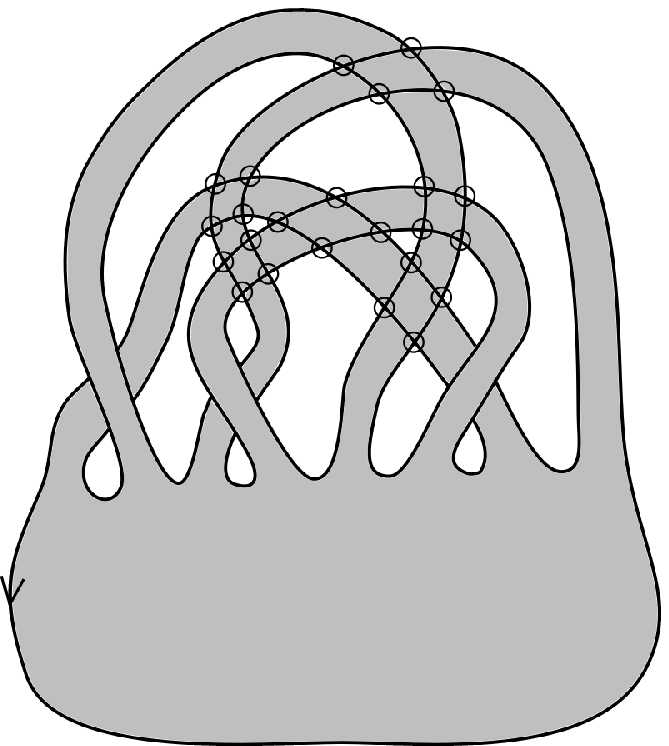\end{tabular}  \\  (5) & (6) \\ & \\ \hline
\end{tabular}}
\caption{Deforming a virtual Seifert surface of 5.2025 to a virtual band presentation.} \label{fig_5pt2025_deform}
\end{figure}

\textbf{Example (5.2025):} Here we find a virtual band presentation for 5.2025 from the virtual Seifert surface $(F;\chi:\Xi \to X)$ constructed in Figure \ref{fig_5pt2025_screen}. The steps used are given in Figure \ref{fig_5pt2025_deform}.  Step (2) can be obtained from step (1) by altering $\chi:\Xi \to X$. Indeed, the disc $d_2$ is replaced in the handle decomposition of $\Xi$ with three fattened $Y$-shaped regions. The vertex of each $Y$ is an embedded disc in the screen. Next, step (3) is obtained from step (2) by three moves of type (C) and some re-positioning of the bands. Performing move (A) together with move (B) gives the virtual Seifert surface seen in step (4). The dotted curve in step (5) encloses a region over which the projector $\chi$ is one-to-one. Thus, we may perform moves from Figure \ref{fig_proj_oto} (see bottom left and right) on the virtual Seifert surface in step (4). Lastly, step (6) is obtained from step (5) via isotopies of $F$ and moves of type (D). In this example, every half-twist of a band can be seen to cancel with an opposite half-twist. Thus, none of the bands in the virtual band presentation have any curls (such as in Figure \ref{fig_proj_oto}, top left and right).

\section{Computing Signatures and Alexander Polynomials} \label{sec_comp}

Given an AC Gauss diagram of an AC knot, a virtual Seifert surface can be constructed and deformed to a virtual band presentation. In this section, it is shown how to use virtual band presentations to compute the Alexander polynomials and directed signature functions from \cite{bcg2} and \cite{acpaper}. We begin with a survey of these results and their relation to virtual knot concordance.

\subsection{Linking numbers, Seifert matrices, and invariants} \label{sec_link} Let $\mathscr{J}$ be an oriented knot in a thickened closed surface $\Sigma \times I$. Using a Mayer-Vietoris argument, it follows that $H_1(\Sigma \times I\smallsetminus \mathscr{J}, \Sigma \times \{1\}) \cong \mathbb{Z}$ and is generated by a meridian $\mu$ of $\mathscr{J}$ (see \cite{acpaper}, Proposition 7.1). If $\mathscr{K}$ is a knot in $\Sigma \times I\smallsetminus \mathscr{J}$, then $[\mathscr{K}]=m \mu \in H_1(\Sigma \times I\smallsetminus \mathscr{J},\Sigma \times \{1\})$. The linking number of $\mathscr{J}$ and $\mathscr{K}$, denoted $\lk(\mathscr{J},\mathscr{K})$, is then defined to be $\lk(\mathscr{J},\mathscr{K})=m$. In contrast with the classical case, the linking number is not symmetric. However, a fundamental relation was given by Cimasoni-Turaev \cite{ct}:
\begin{eqnarray*}\label{ct_rel}
\lk(\mathscr{J},\mathscr{K})-\lk(\mathscr{K},\mathscr{J}) &=& \pi_*([\mathscr{J}])\cdot\pi_*([\mathscr{K}]),
\end{eqnarray*}
where $\pi:\Sigma \times I \to \Sigma$ is projection onto the first factor and $\cdot$ represents the algebraic intersection number. The linking number in $\Sigma \times I$ can be computed combinatorially from a link diagram $J \sqcup K$ on $\Sigma$ as follows: $\lk(J,K)$ is the number of times $J$ crosses over $K$, counted with sign (\cite{acpaper}, Section 7).
\newline
\newline
Let $\mathscr{K}$ be a knot in $\Sigma \times I$ bounding a Seifert surface $F$. Let $\alpha_1,\ldots,\alpha_{2n}$ be simple closed curves on $F$ representing a basis of $H_1(F;\mathbb{Z})$. Define a bilinear form $\beta^{\pm}:H_1(F) \times H_1(F) \to \mathbb{Z}$ by $\beta^{\pm}(x,y)=\lk(x^{\pm},y)$, where $x^{\pm}$ denotes the $\pm$-push-off of $x$ into $\Sigma \times I$. A Seifert matrix for $\beta^{\pm}$ with respect to the basis $[\alpha_1],\ldots, [\alpha_{2n}]$ is given by $V^{\pm}=(\lk(\alpha_i^{\pm},\alpha_j))$. For classical knots, $V^+=(V^-)^{\uptau}$, but this is not for all AC knots.  However, it follows from Equation \ref{ct_rel} that $V^++(V^+)^{\uptau}=V^-+(V^-)^{\uptau}$. In fact, any pair $(V^-,V^+)$ of integral matrices satisfying $\det(V^--V^+)=1$ and $V^++(V^+)^{\uptau}=V^-+(V^-)^{\uptau}$ is realizable as the Seifert pair of an AC knot (\cite{bcg2}, v2).
\newline
\newline
As in the classical case, Seifert matrices are used to define invariants of AC knots. In \cite{acpaper}, Definition 7.6 (see also Remark 7.8), the \emph{Alexander-Conway polynomial} of an AC knot $\mathscr{K} \subset \Sigma \times I$ was defined to be:
\[
\Delta_{\mathscr{K}}(t)=\det(t^{1/2}V^--t^{-1/2}V^+).
\]
This is a well-defined invariant of AC knots up to multiplication by powers of $t^{g}$, where $g$ is the virtual genus\footnote{The virtual genus is the minimum genus among all surfaces on which the virtual knot can be represented.} of $\mathscr{K}$. In \cite{bcg2}, Section 3.3, \emph{directed Alexander-Conway polynomials} and \emph{directed signature functions} were studied. They are given by:
\begin{eqnarray*}
\nabla_{\mathscr{K},F}^{\pm}(t) &=& \det(t^{1/2} V^{\pm}-t^{-1/2}(V^{\pm})^{\uptau}), \text{ and} \\
\hat{\sigma}_{\omega}^{\pm}(\mathscr{K},F) &=& \text{sig}((1-\omega)V^{\pm}+(1-\bar{\omega})(V^{\pm})^{\uptau})),
\end{eqnarray*}
respectively, where $\omega$ is a unit complex number not equal to $1$. Note that both $\nabla_{\mathscr{K},F}^{\pm}$ and $\hat{\sigma}_{\omega}^{\pm}(\mathscr{K},F)$ depend on the choice of Seifert surface.  It is important to note that different Seifert surfaces can produce different directed Alexander-Conway polynomials and directed signature functions. This dependence on $F$ is useful for computing the slice genus of homologically trivial knots in $\Sigma \times I$. 

\begin{definition}[Slice Genus] \label{defn_slice} Let $\mathscr{K}_0,\mathscr{K}_1$ be a knots in $\Sigma_0 \times I,\Sigma_1 \times I$, respectively. Then $\mathscr{K}_0$ and $\mathscr{K}_1$ are said to be \emph{concordant} if there is a compact oriented $3$-manifold $M$ and an annulus $A$ embedded in $M \times I$ such that $\Sigma_1 \sqcup -\Sigma_0 \hookrightarrow \partial M$ and $\partial A=\mathscr{K}_1 \sqcup -\mathscr{K}_0$. A knot that is concordant to the unknot in $S^2 \times I$ is said to be \emph{slice}. Any knot $\mathscr{K}$ in $\Sigma \times I$ can be connected to the unknot in $S^2 \times I$ by some compact oriented surface $S$ in some $M^3 \times I$. The smallest possible genus among all such surfaces $S$ and $3$-manifolds $M$ is called the \emph{slice genus}, denoted $\check{g}_s(\mathscr{K})$.
\end{definition}

The definition of concordance and slice genus given above is due to Turaev \cite{turaev_cobordism}. In \cite{lou_cob}, Kauffman introduced a combinatorial definition for virtual knots. It follows from the main result of Carter-Kamada-Saito \cite{CKS} that the two definitions are equivalent in the smooth category. Boden-Nagel \cite{boden_nagel} proved that a classical knot is slice in the classical sense if and only it is slice in the sense of Definition \ref{defn_slice}. Dye-Kaestner-Kauffman \cite{DKK} extended the Rasmussen invariant to virtual knots. This determines the slice genus exactly for virtual knots having all positive or all negative crossings. In \cite{bcg1}, Theorem 7, it was shown that the odd writhe, Henrich-Turaev polynomial, and writhe polynomial are all concordance invariants. An effective lower bound on the slice genus is the graded genus of Turaev \cite{turaev_cobordism}.  For computational results on the slice genus of virtual knots, see \cite{bcg2,bcg1,rush}.
\newline
\newline
It is interesting to study slice obstructions for AC knots for several reasons. All AC knots have a diagram with trivial index at each crossing and hence the odd writhe, Henrich-Turaev polynomial, and writhe polynomial are all vanishing. Thus, finer obstructions are needed to determine the slice genus of AC knots. In \cite{bcg2}, Theorem 3.6, it was proved that if $\mathscr{K}$ bounds a Seifert surface $F$ in some $\Sigma \times I$ and $\omega \in S^1 \smallsetminus \{1\}$ satisfies $\nabla^{\pm}_{\mathscr{K},F}(\omega) \ne 0$ then:
\[
\frac{|\hat{\sigma}^{\pm}_{\omega}(\mathscr{K},F)|}{2} \le \check{g}_s(\mathscr{K}).
\]
Furthermore, if $\mathscr{K}$ is a slice AC knot, then there are polynomials $f^{\pm}(t) \in \mathbb{Z}[t]$ such that $\nabla^{\pm}_{\mathscr{K},F}(t)=f^{\pm}(t)\cdot f^{\pm}(t^{-1})$. These tools, together with the graded genus and the virtual Rasmussen invariant are sufficient to determine the slice status of all 76 AC knots having at most 6 crossings. Another reason AC knots are interesting is that any concordance invariant of AC knots lifts to a concordance invariant of virtual knots. This follows from the fact that there is a map from virtual knots to AC knots that preserves the concordance relation (see \cite{bcg2}, Section 2.3 and Theorem 2.9.) The map is known as Turaev's coverings of knots \cite{turaev_cobordism}, or equivalently, Manturov's parity projection \cite{manturov}. For a proof of the equivalence of parity projection and coverings, see \cite{bcg2}, Lemma 2.6.
 
\subsection{Computing with virtual Seifert surfaces} \label{sec_compute_last} Here we show how to compute $\Delta_{\mathscr{K}}(t)$, $\nabla_{\mathscr{K},F}^{\pm}(t)$, and $\hat{\sigma}^{\pm}_{\omega}(\mathscr{K},F)$ from a virtual band presentation. To do this, we must first recall the virtual linking number $\vlk(\upsilon_1,\upsilon_2)$. For a two component oriented virtual link $\upsilon_1 \sqcup \upsilon_2$, $\vlk(\upsilon_1,\upsilon_2)$ is the sum of the classical crossings signs where $\upsilon_1$ crosses over $\upsilon_{2}$. 
\newline
\newline
Now, let $F_{\tau}$ be a virtual band presentation with underlying tangle $\tau$ and $\upsilon$ the virtual knot diagram virtually bounding $F_{\tau}$. By Theorem \ref{thm_vss_band}, there is a virtual Seifert surface $(F;\chi:\Xi \to X)$ with $\chi(F)=F_{\tau}$. Let $\alpha_1,\ldots,\alpha_{2n}$ be a canonical system of simple closed curves forming a basis of $H_1(F)$. Such a system of curves, for example, can be found by closing up the ends of the virtual tangle $\tau$ in the natural way. The Seifert matrices are then given by $V^{\pm}=(\lk(\alpha_i^{\pm},\alpha_j))$. Recall that $\lk(\alpha_i^{\pm},\alpha_j)$ is the sum of classical crossing signs where $\alpha_i^{\pm}$ crosses over $\alpha_j$. Using the projector $\chi$, we obtain a virtual link diagram $\chi(\alpha_i^{\pm} \sqcup \alpha_j)$ (see discussion in Section \ref{sec_vss_gen}). By the above definition of $\vlk$, we see that $\lk(\alpha_i^{\pm},\alpha_j)=\vlk(\chi(\alpha_i^{\pm} \sqcup \alpha_j))$. We record this observation in the following theorem.

\begin{theorem} With the notations as above, $V^{\pm}=(\vlk(\chi(\alpha_i^{\pm}\sqcup\alpha_j)))$.
\end{theorem}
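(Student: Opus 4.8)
The plan is to turn the informal observation preceding the statement into a precise count of classical crossings. Recall from Section~\ref{sec_link} (following \cite{acpaper}, Section~7) that for a two-component link diagram $J \sqcup K$ on a surface, the linking number $\lk(J,K)$ in the thickened surface equals the number of classical crossings at which $J$ passes over $K$, counted with sign. Although that description is stated there for a closed surface, it holds verbatim for $\Xi$ with nonempty boundary: the meridian/Mayer--Vietoris computation of Proposition~7.1 of \cite{acpaper} is insensitive to capping off $\Xi$, so the same signed over-crossing count computes $\lk$ in $\Xi \times I$. The first step, then, is to fix for each pair $i,j$ a link diagram on $\Xi$ representing $\alpha_i^{\pm} \sqcup \alpha_j$ (using the canonical system of curves obtained by closing up the ends of $\tau$, with $\alpha_i^{\pm}$ the appropriate push-off of $\alpha_i$) and to read off $\lk(\alpha_i^{\pm},\alpha_j)$ as its signed over-crossing count.

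The second step is a general-position argument with respect to the virtual regions of $X$. Each virtual region $R$ satisfies $\chi^{-1}(R) = R' \sqcup R''$ with $R',R''$ disjoint embedded discs in $\Xi$, so by a small isotopy of the curves on $\Xi$ we may assume that every mutual crossing of $\alpha_i^{\pm}$ and $\alpha_j$ on $\Xi$ lies outside $\bigcup_R \chi^{-1}(R)$; this is exactly the hypothesis under which $\chi(L)$ is defined in Section~\ref{sec_vss_gen}. On a neighborhood of each such crossing $\chi$ is one-to-one, so $\chi$ carries it to a classical crossing of the virtual link diagram $\chi(\alpha_i^{\pm}\sqcup\alpha_j)$, preserving the over/under strands and hence the local sign. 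Conversely, every classical crossing of $\chi(\alpha_i^{\pm}\sqcup\alpha_j)$ arises this way, while all the remaining double points of $\chi(\alpha_i^{\pm}\sqcup\alpha_j)$ occur inside virtual regions and are marked virtual. Thus $\chi$ induces a sign- and over/under-preserving bijection between the classical crossings of the diagram on $\Xi$ and the classical crossings of $\chi(\alpha_i^{\pm}\sqcup\alpha_j)$.

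The third step simply combines these: by the definition of $\vlk$ in Section~\ref{sec_compute_last}, $\vlk(\chi(\alpha_i^{\pm}\sqcup\alpha_j))$ is the signed count of classical crossings of $\chi(\alpha_i^{\pm}\sqcup\alpha_j)$ at which $\chi(\alpha_i^{\pm})$ is the overstrand, virtual crossings contributing nothing. By the bijection of the second step this equals the signed over-crossing count of $\alpha_i^{\pm}\sqcup\alpha_j$ on $\Xi$, which by the first step equals $\lk(\alpha_i^{\pm},\alpha_j)$, i.e. the $(i,j)$ entry of $V^{\pm}$. Since this holds for all $i,j$, we conclude $V^{\pm} = (\vlk(\chi(\alpha_i^{\pm}\sqcup\alpha_j)))$ entrywise.

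I expect the only point requiring genuine care to be the first step: verifying that the combinatorial ``signed over-crossing'' formula for the linking number, which \cite{acpaper} phrases for knots in a closed thickened surface, is valid in $\Xi \times I$ with $\Xi$ having boundary, and that the push-offs $\alpha_i^{\pm}$ can be taken on (a parallel copy of) $F$ so that their images are generic relative to the virtual regions. Both are routine — the former by embedding $\Xi$ in its closed double (or in the Carter surface $\Sigma$) and observing the count is unchanged, the latter by transversality — but they are the places where the argument must be pinned down rather than merely asserted.
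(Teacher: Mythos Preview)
Your proposal is correct and follows essentially the same approach as the paper: the paper states the theorem as a recorded observation, noting that $\lk(\alpha_i^{\pm},\alpha_j)$ is computed by signed over-crossings on $\Xi$, that $\chi$ carries these bijectively to the classical crossings of the virtual link diagram (with new intersections in virtual regions marked virtual), and hence that this count equals $\vlk(\chi(\alpha_i^{\pm}\sqcup\alpha_j))$. Your write-up supplies the general-position and boundary-case details the paper leaves implicit, but the argument is the same.
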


\begin{remark} The direction of the $\pm$-push-off is determined by the orientation of the disc $B$ defining $F_{\tau}$. The convention used here matches the convention used in \cite{bcg2}, Section 4.1. If $B$ is oriented counterclockwise, the $+$ direction is away from the reader (i.e. into the page) and the $-$ direction is towards the reader. The direction of the push-offs is opposite if $B$ is clockwise oriented.
\end{remark}

\textbf{Example (4.99):} The AC knot 4.99 is slice. A virtual band presentation is given in Figure \ref{fig_4pt99_summ}. The image under $\chi$ of an ordered basis $\{\alpha_1,\alpha_2\}$ in $X$ is also drawn. Relative to this basis, we obtain the following Seifert matrices $V^{\pm}$.
\[
V^+=\left[\begin{array}{cc} 
1 & 0  \\ 
0 & -1  \\
\end{array} \right],
\,\,\,\,
V^-=\left[\begin{array}{cc} 
1 & 1  \\ 
-1 & -1  \\
\end{array} \right],
\]
From this we conclude that $\Delta_{\mathscr{K}}(t)=2-(1/t)$, $\nabla_{\mathscr{K},F}^{-}(t)=4$, and $\nabla_{\mathscr{K},F}^{+}(t)=2-t-(1/t)$. Moreover, $\hat{\sigma}_{\omega}^{\pm}(\mathscr{K},F)=0$ for all $\omega \in S^1\smallsetminus 1$. Note also that $\nabla^{\pm}_{\mathscr{K},F}(t)$ factor in the expected way for slice knots. In particular:
\[
\nabla_{\mathscr{K},F}^+(t)=2-t-\frac{1}{t}=(1-t)\left(1-\frac{1}{t}\right).
\] 
\textbf{Example (5.2025):} The AC knot 5.2025 is also slice. For the virtual band presentation in Figure \ref{fig_5pt2025}, we obtain the following Seifert matrices for the ordered basis $\{\alpha_1,\alpha_2,\alpha_3,\alpha_4\}$.
\[
V^+=\left[\begin{array}{cccc} 
0 & 0 & 0 & 0 \\
0 & 0 & 0 & 0 \\
-1 & 1 & 0 & 0 \\ 
0 & -1  & 0 & 0 \\
\end{array} \right],
\,\,\,\,
V^-=\left[\begin{array}{cccc}
0 & 1 & 0 & 1 \\
-1 & 0  & 0 & 0 \\ 
-1 & 1  & 0 & 1 \\ 
-1 & -1  & -1 & 0 \\ 
\end{array}
\right]
\]
It follows that the Alexander-Conway polynomial is given by $\Delta_{\mathscr{K}}(t)=t$. Moreover, $\hat{\sigma}_{\omega}^{-}(K,F)=0$ for all $\omega \ne 1$. The directed Alexander-Conway polynomials are $\nabla^{+}_{\mathscr{K},F}(t)=1$ and $\nabla^{-}_{\mathscr{K},F}(t)=1$.   
\newline
\newline
\textbf{Example (6.87548):} Figure \ref{fig_6pt87548}, bottom right, gives the image under $\chi$ of the ordered basis $\{\alpha_1,\alpha_2,\alpha_3,\alpha_4\}$ for $H_1(F)$. Relative to this basis, the Seifert matrices $V^{\pm}$ are:
\[
V^+=\left[\begin{array}{cccc} 
-1 & 0 & 0 & 0 \\
-1 & 1 & 1 & 0 \\
0 & -1 & 0 & 1 \\ 
0 & 0  & 0 & 1 \\
\end{array} \right],
\,\,\,\,
V^-=\left[\begin{array}{cccc}
-1 & -1 & 0 & 0 \\
0 & 1  & 0 & 0 \\ 
0 & 0  & 0 & 0 \\ 
0 & 0  & 1 & 1 \\ 
\end{array}
\right]
\]
Using these Seifert matrices, we compute the Alexander-Conway and directed Alexander-Conway polynomials as follows:
\begin{eqnarray*}                  
\Delta_{\mathscr{K}}(t) &=& 2-t+\frac{1}{t}-\frac{1}{t^2}, \\
\nabla_{\mathscr{K},F}^+(t) &=& 5-t-t^2-\frac{1}{t}-\frac{1}{t^2}, \text{ and} \\
\nabla_{\mathscr{K},F}^-(t) &=& 3-t-\frac{1}{t}.\\
\end{eqnarray*}
Note that the computation of $\Delta_{\mathscr{K}}(t)$ agrees with the computation in \cite{bcg2}, Table 1, up to multiplication by $t^g$, where $g=1$ is the virtual genus of $6.87548$. Furthermore, observe that $\hat{\sigma}^{\pm}_{\omega}(\mathscr{K},F)=0$ for all $\omega \in S^1\smallsetminus \{1\}$. However, $\nabla_{\mathscr{K},F}^-(t)$ does not factor as $f(t)f(t^{-1})$ for any polynomial $f(t) \in \mathbb{Z}[t]$. Thus, $6.87548$ is not slice and we conclude $\check{g}_s(6.87548) \ge 1$. An explicit genus one cobordism from 6.87548 to the unknot was found in \cite{bcg1}. It follows that $\check{g}_s(6.87548)=1$.

\section{Canonical Seifert Genus} \label{sec_canon}
Let $\mathscr{K}$ be a classical knot in $S^3$. The \emph{$3$-genus} of $\mathscr{K}$, denoted $g(\mathscr{K})$, is the minimum genus among all Seifert surfaces that $\mathscr{K}$ bounds in $S^3$. The \emph{canonical $3$-genus}, denoted $g_c(\mathscr{K})$, is the minimum genus among all Seifert surfaces obtained through Seifert's algorithm (taken over all diagrams of $\mathscr{K}$). The $3$-genus and canonical $3$-genus are not always equal. Moriah \cite{moriah} proved that their difference can be made arbitrarily large. Here we consider the genus and canonical genus for virtual knots. First, the virtual $3$-genus and the virtual canonical $3$-genus of an AC knot is defined. These definitions are then related to work of Boden-et-al.\cite{acpaper}, Kauffman \cite{lou_cob}, Stoimenow-Tchernov-Vdovina \cite{sto_canon}, and Tchernov \cite{chernov_proj}. We then prove that the virtual canonical $3$-genus of a classical knot $\mathscr{K}$ is $g_c(\mathscr{K})$.

\begin{definition}[Virtual 3-Genus, Virtual Canonical 3-Genus] Let $\upsilon$ be a virtual knot. The \emph{virtual $3$-genus} of $\upsilon$ is defined for AC knots to be the minimum genus among all Seifert surfaces for all homologically trivial representatives $\mathscr{K}$ of $\upsilon$ in some thickened surface $\Sigma \times I$ (see also \cite{acpaper}, Definition 6.3). We will denote it by $\check{g}(\upsilon)$. If $\upsilon$ is not AC, we set $\check{g}(\upsilon)=\infty$. A virtual Seifert surface is said to be \emph{canonical} if it is constructed using the virtual Seifert surface algorithm. The \emph{virtual canonical $3$-genus} is defined for AC knots to be the minimum genus among all canonical virtual Seifert surfaces of $\upsilon$. We denote this by $\check{g}_{c}(\upsilon)$. If $\upsilon$ is not AC, we set $\check{g}_{c}(\upsilon)=\infty$.
\end{definition} 

In \cite{acpaper}, Theorem 7.9, it was proved that $\text{width}(\Delta_{\mathscr{K}}(t))/2$ is a lower bound on the virtual $3$-genus, where the width is the difference between the highest and lowest degree terms in $\Delta_{\mathscr{K}}(t)$ (see also \cite{ct}, Proposition 4.1). In \cite{acpaper}, Corollary 6.5, it was shown that a minimal genus Seifert surface can always be realized in a surface $\Sigma \times I$, where the genus of $\Sigma$ is the smallest for which $\upsilon$ can be represented as a knot in $\Sigma \times I$. Thus, if $\mathscr{K}$ is a classical knot in $S^3$, $\check{g}(\mathscr{K})=g(\mathscr{K})$.
\newline
\newline
For classical knots $\mathscr{K} \subset S^3$, the \emph{$4$-genus} is the smallest genus among all compact, connected, oriented, and smooth surfaces in $B^4$ that are bounded by $\mathscr{K} \subset \partial B^4$. The 4-genus is denoted $g_4(\mathscr{K})$. One way to obtain a surface in $B^4$ bounded by $\mathscr{K}$ is to push any Seifert surface $F\subset S^3$ of $\mathscr{K}$ into $B^4$. Thus, $g_4(\mathscr{K}) \le g(\mathscr{K})$. It is currently unknown if $g_4(\mathscr{K})=\check{g}_s(\mathscr{K})$. By Boden-Nagel \cite{boden_nagel}, Theorem 2.8, the equality holds when $\mathscr{K}$ is slice. However, a virtual version of the inequality $g_4\le g$ holds for a different notion of genus for virtual knots that we will call the \emph{virtual canonical slice genus}. 
\newline

The virtual canonical slice genus is defined as follows. Let $\upsilon$ be a virtual knot diagram (not necessarily AC), $\Sigma$ its Carter surface, $K$ the knot diagram on $\Sigma$, and $\mathscr{K}$ the corresponding knot in $\Sigma \times I$. Let $s_1, \ldots,s_p$ be the disjoint Seifert cycles on $\Sigma$. We now construct a $3$-manifold $M$ and a surface $S$ bounded by $\mathscr{K}$ in $M \times I$. Identify $\Sigma$ with $\Sigma \times 1$ in $\Sigma \times I$. For $1 \le i \le p$, attach a $2$-handle $T_i \approx D^2 \times I$ along a copy of $s_i$ in $\Sigma \times 0$. Let $M$ be the resulting $3$-manifold. Then each $s_i$ on $\Sigma \times 1$ bounds a disc $D_i$ in $M$ consisting of the annulus $s_i \times I$ and the core of $T_i$. Thus $\mathscr{K}$ bounds a surface $S$ in $M \times I$ consisting of the discs $D_i$ and half-twisted bands attached along the crossings of $K$. The \emph{virtual canonical slice genus} of $\upsilon$ is the minimum genus among all surfaces $S$ constructed as above, taken over all diagrams of $\upsilon$. We denote this by $\check{g}_{s,c}(\upsilon)$. Clearly, $\check{g}_{s}(\upsilon) \le \check{g}_{s,c}(\upsilon)$.
\newline

The surfaces $S \subset M \times I$ constructed from a virtual knot diagram above have appeared previously in the literature under (at least) two different guises. In \cite{lou_cob}, Kauffman introduced a combinatorial object called a \emph{virtual surface in the $4$-ball}. The surface $S$ can be interpreted as a topological realization in the $4$-manifold $M \times I$  of this combinatorial object. Virtual surfaces in the $4$-ball were used in the extension of the Rasmussen invariant to virtual knots by Dye-Kaestner-Kauffman \cite{DKK}. 
\newline

Another point of view on the surfaces $S \subset M \times I$ is due to Stoimenow-Tchernov-Vdovina \cite{sto_canon}. To each Gauss diagram $D$ of a virtual knot $\upsilon$, a surface $S_D$ may be constructed as follows. Thicken the circle of $D$ to an annulus and then attach bands along the arrows of $D$ so that the resulting compact surface is orientable. By Figure \ref{fig_seif_cross}, adding bands along the arrows of $D$ gives a surface that has the same number of boundary components as the number of Seifert cycles of $\upsilon$. Set $S_D$ to be the closed surface obtained by attaching discs to the boundary components. Observe that $S$ and $S_D$ have the same genus:  $(n-p+1)/2$, where $p$ is the number of Seifert cycles and $n$ is the number of arrows of $D$.
\newline

Consequently, $\check{g}_{s,c}(\upsilon)$ can also be defined as the minimum genus of the surfaces $S_D$ taken over all Gauss diagrams $D$ of $\upsilon$. This was studied by Stoimenow-Tchernov-Vdovina \cite{sto_canon}, where it was referred to as the virtual canonical genus. In \cite{chernov_proj}, Theorem 2.2, Tchernov proved that for classical knot $\mathscr{K}$, $\check{g}_{s,c}(\mathscr{K})=g_c(\mathscr{K})$.  

\begin{lemma}\label{thm_canon} The genera $\check{g}_c$, $\check{g}_{s,c}$, $\check{g}$, and $\check{g}_s$ are related as follows.
\begin{enumerate}
\item For all virtual knots $\upsilon$, $\check{g}_s(\upsilon) \le \check{g}_{s,c}(\upsilon) \le \check{g}_c(\upsilon)$.
\item There are virtual knots $\upsilon$ such that $\check{g}_{s,c}(\upsilon)<\check{g}_{c}(\upsilon)<\infty$. 
\item There are virtual knots $\upsilon$ such that $\check{g}_{s}(\upsilon_2)<\check{g}_{s,c}(\upsilon_2)$. 
\item In general, $\check{g}_{s,c}$ is neither an upper bound nor a lower bound for $\check{g}$.
\end{enumerate}
\end{lemma}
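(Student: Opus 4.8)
The plan is to prove the four assertions separately, drawing on: the genus identity $g(S_D)=(n-p+1)/2$ recalled just before the lemma; Tchernov's equality $\check g_{s,c}(\mathscr K)=g_c(\mathscr K)$ for classical knots \cite{chernov_proj}; the slice statement of Boden--Nagel \cite{boden_nagel}; the Alexander-width bound $\operatorname{width}(\Delta_{\mathscr K}(t))/2\le\check g(\mathscr K)$ of \cite{acpaper}; and the fact, established in Theorem \ref{thm_exist}, that the output of the virtual Seifert surface algorithm is a genuine Seifert surface of $\upsilon$ in a thickened surface (so that $\check g_c(\upsilon)\ge\check g(\upsilon)$).

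For (1), the inequality $\check g_s\le\check g_{s,c}$ was already observed (removing the capping discs from an $S_D$-surface gives a cobordism to the unknot). For $\check g_{s,c}\le\check g_c$ I would assume $\upsilon$ is AC (otherwise $\check g_c=\infty$) and let $F$ be a canonical virtual Seifert surface built from an AC Gauss diagram $D$ with $n$ arrows and $p$ Seifert cycles via subsurfaces $S_1,\dots,S_l$ of the Carter surface, where $S_j$ has genus $h_j$ and $k_j\ge 1$ boundary circles with $\sum_jk_j=p$. Since $F$ is connected with connected boundary and is assembled from $S_1\sqcup\cdots\sqcup S_l$ by attaching $n$ bands, a Euler-characteristic count gives $\chi(F)=\bigl(2l-2\sum_jh_j-p\bigr)-n=1-2g(F)$, so $g(F)=\tfrac{n+p+1}{2}-l+\sum_jh_j\ge\tfrac{n-p+1}{2}=g(S_D)\ge\check g_{s,c}(\upsilon)$; minimizing over all canonical $F$ gives (1).

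For (2), I would take $\upsilon=6.87548$: from its Gauss diagram (Figure \ref{fig_6pt87548}) one reads off $p=5$ Seifert cycles and $n=6$ arrows, so $\check g_{s,c}(6.87548)\le g(S_D)=1$; meanwhile $\check g_c(6.87548)\ge\check g(6.87548)\ge\operatorname{width}(\Delta_{6.87548}(t))/2=3/2$ using the computation $\Delta_{6.87548}(t)=2-t+t^{-1}-t^{-2}$ in Section \ref{sec_comp}, hence $\check g_c(6.87548)\ge\lceil 3/2\rceil=2$, while the genus-$2$ surface produced by the algorithm shows equality, so $\check g_{s,c}(6.87548)\le1<2=\check g_c(6.87548)<\infty$. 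For (3), I would take $\upsilon_2$ to be the Stevedore knot $6_1$, a nontrivial alternating slice classical knot: Seifert's algorithm applied to its alternating diagram realizes the genus, so $g_c(6_1)=g(6_1)=1$ and therefore $\check g_{s,c}(6_1)=g_c(6_1)=1$ by \cite{chernov_proj}, whereas Boden--Nagel \cite{boden_nagel} gives $\check g_s(6_1)=g_4(6_1)=0$ since $6_1$ is classically slice; thus $\check g_s(6_1)<\check g_{s,c}(6_1)$.

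For (4), I would observe that $\check g_{s,c}$ fails to be an upper bound for $\check g$ already by (2), where $\check g_{s,c}(6.87548)\le1<2=\check g(6.87548)$ (alternatively, any non-AC virtual knot $\upsilon$ has $\check g_{s,c}(\upsilon)<\infty=\check g(\upsilon)$), and that it fails to be a lower bound by Moriah's theorem \cite{moriah}: choosing a classical knot $\mathscr K$ with $g_c(\mathscr K)>g(\mathscr K)$ yields $\check g_{s,c}(\mathscr K)=g_c(\mathscr K)>g(\mathscr K)=\check g(\mathscr K)$, using \cite{chernov_proj} and the identity $\check g=g$ for classical knots noted earlier. The hard part will be the bookkeeping in (1) — getting the Euler characteristic of $F$ right when the spanning solution uses subsurfaces that are not discs — and, in (2), making sure the step $\check g_c\ge\check g$ is legitimate (the algorithm's surface really bounds in a thickened surface, and $6.87548$ is AC so the width bound applies).
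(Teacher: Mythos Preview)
Your proposal is correct. Parts (2), (3), and (4) follow essentially the same route as the paper: the example $6.87548$ for (2) and for the ``not an upper bound'' half of (4), Moriah's theorem together with Tchernov's equality for the ``not a lower bound'' half of (4), and a nontrivial slice classical knot for (3). The paper uses a generic nontrivial slice classical knot rather than $6_1$ specifically, and it only needs the trivial observation $g_c(\mathscr K)\ge 1$ for nontrivial $\mathscr K$ rather than the alternating-diagram argument, but these are cosmetic.

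The genuine difference is in (1). The paper argues geometrically: it embeds the Gauss diagram $D$ inside the canonical surface $F$ by drawing the arrows across the half-twisted bands, takes a regular neighborhood $V(D)\subset F$, observes that capping $\partial V(D)$ with discs yields exactly $S_D$, and then notes that $F$ is obtained from $V(D)$ by gluing compact surfaces along $\partial V(D)$, which can only raise the genus compared to capping with discs. Your Euler-characteristic count $g(F)=(n+p+1)/2-l+\sum_j h_j\ge (n-p+1)/2$ reaches the same conclusion by pure bookkeeping, using only that the partition into spanning subsurfaces satisfies $l\le p$ and $h_j\ge 0$. Your argument is shorter and avoids the somewhat informal ``adding a handle'' step in the paper's version; the paper's argument, on the other hand, makes the comparison between $F$ and $S_D$ more visibly geometric and explains \emph{why} discs are optimal. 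Either works.
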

\begin{proof} For (1), let $(F,\chi:\Xi \to X)$ be a canonical virtual Seifert surface for an AC knot diagram $\upsilon$. We construct a Gauss diagram $D$ of $\upsilon$ as follows. The circle of $D$ is identified with $\partial F$. At each of the half-twisted bands of $F$, draw an arrow in $F$ pointing from the over-crossing arc straight down to the under-crossing arc (see Figure \ref{fig_canon_ex}, where the arrows have been moved slightly so that they are visible). Observe that the set of embedded arrows are disjoint in $F$. Furthermore, $\partial F$ together with the embedded arrows forms a Gauss diagram $D$ of $\upsilon$. Let $V(D)$ be a regular neighborhood of $D$ in $F$. The surface obtained by attaching discs to $\partial V(D)$ is precisely a surface of the form $S_D$ as described above. 
\newline

Now, $F$ can also obtained from $V(D)$ by gluing surfaces with boundary to $V(D)$. These surfaces might, for example, be annulus or a pair of pants. If an annulus is glued to two boundary components of $V(D)$, then the genus has to increase. Indeed, you are adding a handle to $V(D)$. The same thing goes for a pair of pants. Thus we observe that $S_D$ is the smallest possible genus surface that can be created by gluing surfaces with boundary to $V(D)$. Hence, the genus of $S_D$ is at most the genus of $F$. Thus:
\[
\check{g}_{s,c}(\upsilon)=\min\{\text{genus}(S_E): E \text{ is a Gauss diagram of } \upsilon\} \le \text{genus}(S_D) \le \text{genus}(F).
\]
It follows that $\check{g}_{s,c}(\upsilon) \le \check{g}_{c}(\upsilon)$, as required. The second inequality in (1) was discussed previously.

From Figure \ref{fig_6pt87548}, we have a virtual Seifert surface of genus $2$ for 6.87548. Moreover, $\text{width}(\Delta_{\mathscr{K}}(t))=3$. Thus, $\check{g}_c(6.87548)=\check{g}(6.87548)=2$. Again consulting Figure \ref{fig_6pt87548}, it follows that $\check{g}_{s,c}(6.87548)=1=\check{g}_s(6.87548)$ (see Section \ref{sec_compute_last}). Thus $\check{g}_{s,c}(6.87548)<\check{g}_c(6.87548)=\check{g}(6.87548)<\infty$. Thus (2) holds and $\check{g}_{s,c}$ is not an upper bound for $\check{g}$. Now, by Moriah's theorem \cite{moriah} (see corollary to the main theorem in Section 3 therein), there is a classical knot $\mathscr{K}$ such that $g(\mathscr{K})<g_c(\mathscr{K})$. Then by the preceding discussion, $\check{g}(\mathscr{K})=g(\mathscr{K})<g_c(\mathscr{K})=\check{g}_{s,c}(\mathscr{K})$, where we recall that the equalities hold because $\mathscr{K}$ is classical. Thus, $\check{g}_{s,c}$ is also not a lower bound for $\check{g}$ and (4) follows. For (3), suppose that $\mathscr{K}$ is a non-trivial slice classical knot. Then we have: 
\[
\check{g}_{s}(\mathscr{K})=g_4(\mathscr{K}) =0<1\le g_c(\mathscr{K})=\check{g}_{s,c}(\mathscr{K}), 
\]
where all equalities hold because $\mathscr{K}$ is assumed to be classical and slice.
\end{proof}

\begin{figure}[htb]
\begin{tabular}{|c|} \hline \\
$
\xymatrix{
\begin{array}{c} 
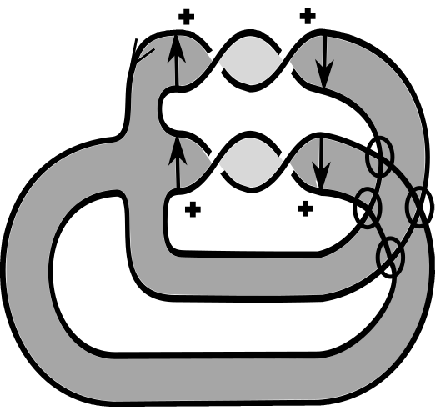\end{array} 
\ar[r] &
\begin{array}{c}
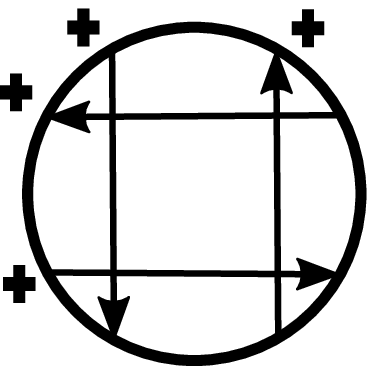\end{array}
} 
$ 
\\ \hline
\end{tabular}
\caption{Obtaining a Gauss diagram of an AC knot from a virtual Seifert surface.} \label{fig_canon_ex}
\end{figure}

The main theorem of this section is the following result. It states that the minimal genus surface produced by the virtual Seifert surface algorithm applied to a classical knot $\mathscr{K}$ has genus equal to the classical $3$-genus of $\mathscr{K}$, where the minimum is taken over all Gauss diagrams of virtual knots representing $\mathscr{K}$.

\begin{theorem} If $\mathscr{K}$ is a classical knot, $\check{g}_c(\mathscr{K})=g_c(\mathscr{K})$.
\end{theorem}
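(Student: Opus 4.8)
The plan is to prove the two inequalities $\check{g}_c(\mathscr{K})\le g_c(\mathscr{K})$ and $g_c(\mathscr{K})\le\check{g}_c(\mathscr{K})$ separately; the second one will follow essentially for free by chaining together results already in hand, while the first requires unwinding the definitions of Section \ref{sec_vss} in the special case of a planar diagram.

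For $\check{g}_c(\mathscr{K})\le g_c(\mathscr{K})$, I would argue that Seifert's classical algorithm is literally an instance of the virtual Seifert surface algorithm. Let $K$ be a classical diagram of $\mathscr{K}$ realizing $g_c(\mathscr{K})$. Its Carter surface is $S^2$, and each Seifert cycle $s_i$ of $K$ bounds a disc in $S^2$; consequently the ``all-disc'' spanning solution with singletons $G_j=\{s_j\}$, where $S_j$ is a union of $2$-handles (faces of $K$) forming a Seifert disc, is available, after possibly adding a copy of $d_1+\cdots+d_m$ as in Theorem \ref{lemma_miss} so that some $2$-handle is missed. Running Steps 3--5 on this spanning solution produces, in $\Xi\times I$ with $\Xi=\overline{S^2\smallsetminus d_m}$ a disc, exactly the classical Seifert surface $F_K$: the subsurfaces are the Seifert discs placed at different heights and the half-twisted bands are attached at the crossings precisely as in Seifert's algorithm, so $\text{genus}(F_K)=(n-p+1)/2$ matches the classical count. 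Hence $F_K$ occurs as a canonical virtual Seifert surface of $\mathscr{K}$ and $\check{g}_c(\mathscr{K})\le\text{genus}(F_K)=g_c(\mathscr{K})$.

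For $g_c(\mathscr{K})\le\check{g}_c(\mathscr{K})$, the idea is to route through the virtual canonical slice genus. By Lemma \ref{thm_canon}(1), $\check{g}_{s,c}(\upsilon)\le\check{g}_c(\upsilon)$ for every virtual knot $\upsilon$, and in particular $\check{g}_{s,c}(\mathscr{K})\le\check{g}_c(\mathscr{K})$. On the other hand, by Tchernov's theorem (\cite{chernov_proj}, Theorem 2.2), $\check{g}_{s,c}(\mathscr{K})=g_c(\mathscr{K})$ when $\mathscr{K}$ is classical, using the identification of the surfaces $S_D$ with the Stoimenow--Tchernov--Vdovina surfaces recalled just before Lemma \ref{thm_canon}. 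Combining these, $g_c(\mathscr{K})=\check{g}_{s,c}(\mathscr{K})\le\check{g}_c(\mathscr{K})$, and together with the previous paragraph this yields $\check{g}_c(\mathscr{K})=g_c(\mathscr{K})$.

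I expect the only real obstacle to be the first, easy-looking inequality: one must verify carefully that the virtual Seifert surface algorithm, fed a planar diagram together with the disc spanning solution, reproduces the classical surface on the nose rather than something of larger genus, and that no virtual crossings of bands are forced on the screen built from the planar configuration of faces. This is a routine but necessary unwinding of the constructions in Section \ref{sec_vss}. Everything in the reverse direction is already done: Lemma \ref{thm_canon}(1) carries out the topological step (any $S_D$ extracted from a canonical virtual Seifert surface $F$ has genus at most $\text{genus}(F)$), and Tchernov's classical result supplies the matching identification $g_c(\mathscr{K})=\check{g}_{s,c}(\mathscr{K})$, so that no further surface manipulation is needed.
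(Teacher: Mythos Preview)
Your proposal is correct and follows the same route as the paper: the paper's proof is exactly ``$\check{g}_c(\mathscr{K}) \le g_c(\mathscr{K})$ from the definitions, then $g_c(\mathscr{K}) = \check{g}_{s,c}(\mathscr{K}) \le \check{g}_c(\mathscr{K})$ via Lemma~\ref{thm_canon}(1) and Tchernov's theorem.'' Your worry about virtual crossings being forced on the screen is a red herring, since the genus of a virtual Seifert surface $(F;\chi:\Xi\to X)$ is simply $\text{genus}(F)$ in $\Xi\times I$, independent of how $\chi(F)$ sits in $\mathbb{R}^2$; the paper accordingly treats the first inequality as immediate.
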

\begin{proof} From the definitions, it follows that $\check{g}_c(\mathscr{K}) \le g_c(\mathscr{K})$. By Lemma \ref{thm_canon}, $\check{g}_{s,c}(\mathscr{K}) \le \check{g}_{c}(\mathscr{K})$. By Tchernov \cite{chernov_proj}, Theorem 2.2, $\check{g}_{s,c}(\mathscr{K})=g_c(\mathscr{K})$. 
\end{proof}

\begin{acknowledgments} This work grew out of collaborations with H. U. Boden, R. Gaudreau, and R. Todd. The author is indebted to them for numerous helpful conversations and careful readings of previous drafts. A special thanks is owed to H. U. Boden, whose suggestions led to the significant improvement of definitions, notation, and terminology.  The author is also grateful for encouragement and advice from P. Cahn and H. A. Dye. 
\end{acknowledgments}

\bibliographystyle{plain}
\bibliography{bib_virt_seif}

\end{document}